\newtheorem{thm}{Theorem}[section]
\newtheorem{lem}[thm]{Lemma}
\newtheorem{rem}{Remark}[section]
\newtheorem{exm}{Example}[section]
\newtheorem{scm}{Scheme}
\DeclareDocumentCommand\rootedtree{o}{\Forest{rooted tree [#1]}}
\newcommand{\vertiii}[1]{{\left\vert\kern-0.25ex\left\vert\kern-0.25ex\left\vert #1  \right\vert\kern-0.25ex\right\vert\kern-0.25ex\right\vert}}
\newcommand{\lap}{\mathrm{\Delta}}
\journalname{Journal}
\begin{document}

\title{An exponential-free Runge--Kutta framework for developing third-order unconditionally energy stable schemes for the Cahn--Hilliard equation}

%
\author{Haifeng Wang$^1$\and
Jingwei Sun$^1$\and
Hong Zhang$^{1,*}$ \and
Xu Qian$^{1}$ \and
Songhe Song$^{1}$
}

\institute{
$^*$ Corresponding author	\\
E-mail addresses: hf\_wang1031@163.com (H. Wang), zhanghnudt@163.com (H. Zhang), qianxu@nudt.edu.cn (X. Qian),  shsong@nudt.edu.cn (S. Song) \\
$^1$ Department of Mathematics, College of Science, National University of Defense Technology, Changsha, Hunan, 410073, China \\
}

\date{Received: date / Accepted: date}

\maketitle

\begin{abstract}
In this work, we develop a class of up to third-order energy-stable schemes for the Cahn--Hilliard equation. Building on Lawson's integrating factor Runge--Kutta method, which is widely used for stiff semilinear equations, we discuss its limitations, such as the inability to preserve the equilibrium state and the oversmoothing of interfacial layers in the solution's profile because of the exponential damping effects. To overcome this drawback, we approximate the exponential term using a class of sophisticated Taylor polynomials, leading to a novel Runge--Kutta framework called exponential-free Runge--Kutta. By incorporating stabilization techniques, we analyze the energy stability of the proposed schemes and demonstrate that they preserve the original energy dissipation without time-step restrictions. Furthermore, we perform an analysis of the linear stability and establish an error estimate in the $\ell^2$ norm. A series of numerical experiments validate the high-order accuracy, mass conservation, and energy dissipation of our schemes. 

\keywords{Cahn--Hilliard equation \and exponential-free Runge--Kutta scheme \and unconditional energy stability \and convergence analysis}

\end{abstract}

\section{Introduction}

The Cahn--Hilliard (CH) equation, which belongs to the class of phase field models, was originally introduced by Cahn and Hilliard in \cite{cahn1958free} to describe phenomena of phase separation and coarsening in non-uniform systems, such as glasses, alloys, and fixed-temperature polymer blends. 
While the CH equation offers an improvement over the Allen--Cahn (AC) equation by ensuring mass conservation, it also introduces higher-order spatial derivatives that increase stiffness and pose challenges for numerical computation. As a result, despite its theoretical advantages, creating a high-order, efficient, and stable numerical scheme that can deal with the stiffness from these spatial derivatives for the CH equation while maintaining the original characteristics is crucial in practical applications.

In this work, we consider the CH equation with the following form:
\begin{equation}
	\label{eqn:chsys}
	\left\{
		\begin{aligned}
			& u_t = \lap(-\epsilon^2\lap u + f(u)), &&\mathbf{x}\in \Omega,\ t\in (0,T],\\
			& u(\mathbf{x},0) = u_0(\mathbf{x}), &&\mathbf{x}\in \partial\Omega,
		\end{aligned}
	\right.
\end{equation}
where $\Omega = \prod_{i=1}^d(a_i,b_i)$ is a rectangle when $d = 2$ or a cuboid when $d = 3$, and the unknown solution $u$ is subject to the periodic boundary condition that represents the difference between two phase proportions. The diffusion coefficient $\epsilon$ characterizes the interfacial thickness of the transition region. The nonlinear function $f(\cdot)$ is defined as the derivative of the double-well potential function:
\begin{equation*}
	F(u) := \frac{1}{4}(u^2-1)^2,\quad f(u) := F'(u) = u^3 - u.
\end{equation*}

As is well known, the CH equation \eqref{eqn:chsys} is the $H^{-1}$ gradient flow \cite{fife2000models} with respect to the Ginzburg-Landau free energy functional $E(u)$, i.e.,
\begin{equation*}
	u_t  = \lap(\frac{\delta E(u)}{\delta u}),
\end{equation*}
where $\frac{\delta E(u)}{\delta u}$ denotes the standard variational derivative, and energy is represented as:
\begin{equation}
	\label{eqn:energy}
	E(u) := \int_{\Omega} \left( \frac{\epsilon^2}{2}|\nabla u|^2 + F(u)\right)\mathrm{d}\mathbf{x},
\end{equation}
with the symbol $|\cdot|$ denotes the Euclidean norm.
Therefore, the CH equation inherently satisfies the energy dissipation law:
\begin{equation*}
	\frac{\mathrm{d}}{\mathrm{d}t}E(u) = (u_t,\frac{\delta E(u)}{\delta u}) = - \Vert(-\lap)^{-\frac12}\frac{\partial u}{\partial t}\Vert^2_{L^2}<0,\quad \forall t>0,
\end{equation*}
where $(\cdot,\cdot)$ and $\Vert\cdot\Vert_{L^2}$ are the standard $L^2$ inner product and norm, respectively, and are defined as
\begin{equation*}
	(u,v) = \int_{\Omega}uv \mathrm{d} \mathbf{x},\quad \Vert u\Vert_{L^2} = (\int_{\Omega}|u|^2 \mathrm{d} \mathbf{x})^{\frac{1}{2}},\quad \forall u,v\in L^2(\Omega).
\end{equation*}
In addition, the CH equation is mass conservative along the time evolution, this is
\begin{equation}
	\int_{\Omega}u(\mathbf{x},t) \mathrm{d} \mathbf{x} = \int_{\Omega}u(\mathbf{x},0) \mathrm{d} \mathbf{x}, \quad t> 0,
\end{equation}
under a periodic boundary condition.

In the past two decades, there has been significant interest in developing numerical methods to solve the CH models, where the discrete energy dissipation law has garnered considerable attention in numerical analysis. In the early stage, researchers paid more attention to designing implicit methods to ensure numerical stability when solving the CH equation. In 2001, Furihata \cite{furihata2001stable} constructed a conservative Crank--Nicolson scheme to solve the one-dimensional CH equation and proved the inherit the properties of the equation such as the mass conservation and energy decrease under time step constrain. By separating the energy into the difference of two convex functional, Eyre \cite{eyre1998unconditionally,eyre1998unconditionally_ch} pioneeringly constructed the convex splitting scheme in 2009 and revealed its unconditional energy stability. Subsequently, Wise et al. \cite{wise2010unconditionally} adopted the convex splitting scheme to invest the Cahn--Hilliard-Hele--Shaw system and analyzed both the energy stability and convergence of the proposed scheme. More recently, combining the idea of the convex splitting scheme, Cheng et al. \cite{cheng2022third} proposed a third-order backward differentiation formula (BDF)-type scheme for the CH equation. They proved its energy stability with respect to a modified energy and demonstrated that the scheme guarantees the original energy is uniform boundedness. In addition, various numerical research has also focused on other phase field models, such as the thin film epitaxial model\cite{qiao2011adaptive}, and the phase field crystal model \cite{wise2009energy}. However, solving implicit schemes requires using Newton's solver at each time step, making it time-consuming and challenging for long-term simulations.

Since the CH equation requires long-term simulations to reach a steady state, it is essential to design efficient scheme to improve computational performance. In contrast to the traditional method \cite{shen2010numerical} that aims to preserve the dissipation of the original energy, the invariant energy quantization (IEQ) \cite{guillen2013linear,yang2016linear} and the scalar auxiliary variable (SAV) approach \cite{shen2019new,shen2018convergence} were proposed to develop unconditionally energy dissipation schemes with respect to a modified energy in the gradient flow problem and enable linear solvability in each time step. From the other aspect, the stabilization technique\cite{he2007large,xu2019stability} has gained significant attention in recent years and achieved remarkable success by combining it with various schemes. This technique explicitly handles the nonlinear terms and introduces a stabilization term to avoid time-step constraints, which eliminates the need for nonlinear iterations. Although this technique offers advantages, it also introduces a dispersion error in the solution \cite{xu2019stability,lee2019effective}. Therefore, low-order schemes usually cannot provide reliable solutions with a large time step, and high-order schemes are necessary for accurate simulation. Utilizing the stabilized exponential time difference (ETD) Runge--Kutta (RK) scheme, Li \cite{li2019convergence} conducted an error analysis for the first- and second-order schemes in solving the CH equation. They further proved the boundedness of the numerical solution under moderate constraints on the time step and spatial mesh size. After that, Fu et al.\cite{fu2022energy} provided an energy stability analysis for the ETDRK scheme with respect to the general phase-field models with the Lipschitz assumption. 
For the classical implicit-explicit Runge-Kutta (IMEX RK) scheme, Fu et al.\cite{fu2022unconditionally} also presented a general proof to show that the scheme can unconditionally preserve the energy dissipation law with stabilization techniques and certain RK coefficient constraints.
More information and applications of the stability technique can be found in \cite{duchemin2014explicit,li2022stability,tang2016implicit,du2019maximum} and the references therein.

The objective of this work is to construct a class of up to third-order, unconditional energy stable scheme for the CH equation.
Due to the strong stiffness of the perturbed biharmonic operator and the strong nonlinearity induced by the potential, solving the CH equation presents inherent numerical difficulties. 
There are many methods for nonlinear stiff problems, and one of the effective schemes is Lawson's integrating factor Runge--Kutta (IFRK) method \cite{lawson1967generalized,isherwood2018strong}.
However, IFRK methods also have their limitations, and one notable drawback is that they introduce the exponential damping effect\cite{hou1994removing,leo1998diffuseactamat}. 
Although this phenomenon can help suppress numerical instability, it can also oversmooth the numerical solution, which causes deviations from the solution of the original problem and makes it unreliable. In recent years, significant research \cite{du2019third,huang2018bound} efforts have been dedicated to improving the integrating factor Runge--Kutta (IFRK) method. In addition, our research group focuses on improving the IFRK method to preserve the maximum principles of AC-type equations and eliminate the time delay effects introduced by stabilization techniques \cite{zhang2022explicit}. 
Consistent with these endeavors, the objective of this work is to modify the IFRK scheme and develop new schemes that eliminate the exponential damping effect as well as preserve the energy dissipation of the CH equation. 
The contribution of our study can be summarized as follows:

\begin{itemize}
    \item By employing a Taylor-polynomial approximation to approximate the exponential function, we devise a family of exponential-free Runge--Kutta (EFRK) schemes that eliminate the exponential damping phenomenon in the IFRK method while preserving the steady state without destroying the convergence.
    \item We establish a unified framework to check whether the proposed EFRK method unconditionally preserves the original energy dissipation law. Meanwhile, we provide a class of up to third-order Runge--Kutta Butcher tableaux that satisfy this framework.
\end{itemize}

The remainder of the paper is organized as follows. In Section \ref{sec:2}, some preliminary information on the Fourier spectral collocation method is presented to obtain the spatial semi-discretization system of the CH equation. In Section \ref{sec:3}, we discuss the exponential damping phenomenon of the IFRK method and propose the EFRK framework that fixes this drawback, which can preserve the equilibrium states of the original system even with arbitrarily large time steps. Then, the linear stability and energy stability of the proposed scheme are rigorously proven in Section \ref{sec:4}, followed by error estimates derived in Section \ref{sec:5}. Section \ref{sec:6} presents various numerical experiments to test the accuracy and demonstrate the stability for long-time simulations. We finally provide the conclusions in Section \ref{sec:7}.

\section{Fourier spectral collocation approximations}
\label{sec:2}

In this section, we introduce the spectral collocation method for the spatial approximations of the CH equation.  Focusing on the one-dimensional (1D) interval $(a_1,b_1)$ with periodic boundary conditions. Let $N_1$ represent an even integer and denote $h_1 = \frac{b_1-a_1}{N_1}$; then, the interval can be uniformly partitioned as $\Omega_h = \{x_j|x_j = a_1 + jh_1, j= 0,1,\dots,N_1-1\}$. The grid function space with the periodic boundary condition defined on $\Omega_h$ is defined as $\mathcal{M}_{N_1}$, i.e.,
\begin{equation*}
  \mathcal{M}_{N_1} = \{\bm{v}|\bm{v} = [v(x_0),\dots,v(x_{N_1-1})]^T,x_j\in \Omega_h\}\in \mathbb{R}^{N_1}.
\end{equation*}

As described in \cite{shen2011spectral}, the spectral collocation method utilizes trigonometric basis, which are given by
\begin{equation*}
	\psi_j(x) = \frac{1}{N_1}\sum_{l=-N_1/2}^{N_1/2}\frac{1}{c_l}e^{\mathrm{i}\mu_1l(x-x_j)},
\end{equation*}
where $c_l = 1$ for $|l| < N_1/2$, $c_l = 2$ for $l= \pm N_1/2$, and $\mu_1 = \frac{2\pi}{b_1-a_1}$. Therefore, we obtain the corresponding interpolation function space $S_{N_1} = \text{span}\{\psi_j(x), j=0,1,\dots,N_1-1\}$, 
and the interpolation operator $I_{N_1} : C(\bar{\Omega}) \rightarrow S_{N_1}$ can defined as:
\begin{equation*}
	I_{N_1}u(x) = \sum_{j=0}^{N_1-1}u_j\psi_j(x) = \sum_{l=-N_1/2}^{N_1/2}\hat{u}_le^{\mathrm{i}\mu_1l(x-a_1)},\ \text{where}\
	\hat{u}_l = \frac{1}{N_1c_l}\sum_{j=0}^{N_1-1}u_je^{-\mathrm{i}\mu_1l(x_j-a_1)}.
\end{equation*}

Applying the 1D Laplacian operator $\lap$ to the interpolated function $I_{N_1}u(x)$ and utilizing the periodicity of trigonometric functions, we obtain
\begin{equation*}
	\begin{aligned}
	\lap (I_{N_1}u(x_i)) &= \sum_{l=-N_1/2}^{N_1/2} (\mathrm{i}\mu_1l)^2\left(\frac{1}{N_1c_l}\sum_{j=0}^{N_1-1}u_je^{-\mathrm{i}\mu_1l(x_j-a_1)}\right)e^{\mathrm{i}\mu_1l(x_i-a_1)}\\
						   &= \frac{1}{N_1}\sum_{l=0}^{N_1-1}\lambda_l^2\left(\sum_{j=0}^{N_1-1}u_je^{-\mathrm{i}\mu_1l(x_j-a_1)}\right)e^{\mathrm{i}\mu_1l(x_i-a_1)},\\
	\end{aligned}
\end{equation*}
where
\begin{equation*}
	\lambda_l^2 = 
	\left\{
		\begin{aligned}
			&(\mathrm{i}\mu_1l)^2,\quad &&0\leq l < N_1/2,\\
			&\frac{1}{2}(\mathrm{i}\mu_1l)^2 + \frac{1}{2}(\mathrm{i}\mu_1(l-N_1))^2, && l = N_1/2,\\
			&(\mathrm{i}\mu_1(l-N_1))^2,\quad && N_1/2<l\leq N_1-1.
		\end{aligned}
	\right.
\end{equation*}

Consequently, for any grid function $\bm{u} \in \mathcal{M}_{N_1}$, we can define the discrete Laplacian as $D_1^{(2)} = F_{N_1}^{-1}\Lambda_1^{(2)}F_{N_1}$, where $\Lambda_{1}^{(2)} = \text{diag}(\lambda_l^2)$ represents the eigenvalue of the Laplacian, and $F_{N_1},\ F_{N_1}^{-1}\in \mathbb{R}^{N_1\times N_1}$ denote the discrete Fourier transform and inverse discrete Fourier transform with the elements $(F_{N_1})_{l,j} = \frac{1}{N_1}e^{-\frac{\mathrm{i}2\pi lj}{N_1}}$, and $(F_{N_1}^{-1})_{j,l} = e^{\frac{\mathrm{i}2\pi lj}{N_1}}$.

For the case of the $d$-dimensional ($d\leq 3$) hyperrectangle domain $\Omega = \Pi_{k=1}^d(a_k,b_k)$, choose an even number $N_k$ as the grid number along the $k$th-dimension. We set $N = \Pi_{k=1}^{d}N_k$ as the total number of grid points. In addition, we denote $h_k = \frac{b_k-a_k}{N_k}$ as the grid size for each dimension and define $h = \max\limits_{k}\{h_k\}$, $\mu_k = \frac{2\pi}{b_k - a_k}$. For convenience, the $d$-dimensional grid function $\bm{u}$ is reshaped into a vector in the order of the first, second, and third dimension, which belongs to $\mathcal{M}_N$ and is equipped with the $\ell^2$-inner product, $\ell^2$-norm, and $\ell^\infty$-norm defined by
\begin{equation*}
	\langle \bm{u},\bm{v}\rangle : =  \Pi_{k=1}^{d}h_k\sum_{i=0}^{N-1}u_iv_i,\quad \Vert \bm{u}\Vert_{\ell^2}^2 = \langle\bm{u},\bm{u}\rangle,\quad \Vert \bm u\Vert_{\ell^{\infty}} = \max_{i}|u_i|,\quad \forall \bm{u},\bm{v}\in \mathcal{M}_N.
\end{equation*}

The discrete Laplacian in multi-dimensions can be implemented using the Kronecker products as 
\begin{equation*}
	\begin{aligned}
		&\text{1D:}\ \lap_{1,N} := D_1^{(2)},\\
		&\text{2D:}\ \lap_{2,N} := (F_{N_2}^{-1}\otimes F_{N_1}^{-1})(I_2\otimes \Lambda_1^{(2)} + \Lambda_2^{(2)}\otimes I_1 )(F_{N_2}\otimes F_{N_1}),\\
		&\text{3D:}\ \lap_{3,N} := (F_{N_3}^{-1}\otimes F_{N_2}^{-1}\otimes F_{N_1}^{-1})(I_3\otimes I_2\otimes \Lambda_1^{(2)} + I_3 \otimes\Lambda_2^{(2)}\otimes I_1+ \Lambda_3^{(2)}\otimes I_2 \otimes I_1)(F_{N_3}\otimes F_{N_2}\otimes F_{N_1}),\\
	\end{aligned}
\end{equation*}
where $\Lambda_k^{(2)}$ and $I_k$ are the corresponding eigenvalue and identity matrices, respectively, in the $k$th-dimension. To simplify the notation, we omit the dimension subscript and use $\lap_N$ to represent the discrete Laplacian operator in the remainder of this paper. 

In addition, the discrete Fourier transformation of $\bm{u}$ is denoted as
\begin{equation*}
\hat{\bm u} = 
\left\{
		\begin{aligned}
			& F_{N_1} \bm{u},&&\text{1D,}\\
			&(F_{N_2}\otimes F_{N_1})\bm{u},&&\text{2D,}\\
			&(F_{N_3}\otimes F_{N_2}\otimes F_{N_1})\bm{u},&&\text{3D.}\\
		\end{aligned}
	\right.
\end{equation*}
For any $\bm u, \bm v\in \ell^2(\mathcal{M}_N):=\{\bm u|\bm u\in \mathcal{M}_N, ||\bm u||_{\ell^2} \leq \infty \}$, Parseval's theorem gives
\begin{equation*}
	\langle\bm u,\bm v\rangle = |\Omega|({\hat{\bm v}}^{H}\hat{\bm u}),
\end{equation*}
where $|\Omega| = \Pi_{k=1}^d(b_k-a_k)$ denotes the area of the domain $\Omega$.

By noticing the mass conservation of the CH equation, we assume the mean of the solution $\bm u$ is zero without loss of generality. Consequently, in our analysis, we only consider the grid function with zero mean, which is denoted as $\mathcal{M}_N^0$,
\begin{equation*}
	\mathcal{M}_N^0 = \left\{\bm v\in \mathcal{M}_N|\langle \bm v,\bm 1\rangle = 0 \right\} = \left\{\bm v\in \mathcal{M}_N| \hat{v}_0 = 0\right\}.
\end{equation*}

Based on the above description, the semi-discretization of the CH equation on a $d$-dimensional domain is to find a function $\bm u: [0,\infty) \rightarrow \mathcal{M}_N^0$ such that
\begin{equation}
	\label{eqn:semi-disc}
	\left\{
		\begin{aligned}
			&\bm{u}_t = \lap_N(-\epsilon^2\lap_N\bm{u}+f(\bm{u})),\quad t\in(0,T],\\
			&\bm{u}(0) = \bm u_0.
		\end{aligned}
	\right.
\end{equation}
where $\bm u_0 \in \mathcal{M}_N^0$ is given by initial condition. 

\begin{rem}
	Since the eigenvalues of the discrete Laplacian matrix are non-positive, it holds that
	\begin{enumerate}
		\item[(i)] the matrix $\lap_N$ is self-adjoint and negative semi-definite on $\mathcal{M}_N$.
		\item[(ii)] the matrix $\lap_N$ is self-adjoint and negative definite on $\mathcal{M}_N^0$; thus, it is invertible on $\mathcal{M}_N^0$.
	\end{enumerate}
\end{rem}

By approximating the energy functional as $E(\bm u) = -\frac{\epsilon^2}{2}\langle \bm u,\lap_N \bm u\rangle + \langle F(\bm u), \bm 1 \rangle$, the semi-discrete schemes admit the energy dissipation law, i.e.,
\begin{equation*}
	\frac{\mathrm{d} E(\bm{u}(t))}{\mathrm{d} t} = \langle -\epsilon^2\lap_N\bm{u} + f(\bm{u}) ,\bm{u}_t\rangle =  \langle\lap_N^{-1} \bm u_t, \bm u_t\rangle\ \leq 0.
\end{equation*}

\section{Exponential-free Runge--Kutta framework}
\label{sec:3}

In this section, we begin by providing a concise review of Lawson's integrating factor Runge--Kutta (IFRK) method. Following this, we delve into a discussion on its exponential damping effects, which cause this method to fail in preserving equilibrium states and to oversmooth the solution at the interfacial layer. Subsequently, we propose a strategy to overcome this limitation and develop a new Runge--Kutta framework.

\subsection{Equilibrium-preserving Taylor-polynomial approximations}

For the sake of stability, we introduce a positive constant $\kappa \geq 0$ and add the term $\kappa \lap_N( \bm u - \bm u)$ on the right hand of the semi-discrete CH system \eqref{eqn:semi-disc}, yielding:
\begin{equation}
	\label{eqn:stab-reac-diff}
	\bm{u}_t = L_\kappa \bm{u} + N_\kappa(\bm u),
\end{equation}
where $L_\kappa = \lap_N(-\epsilon^2\lap_N + \kappa \bm u)$ and $N_\kappa = \lap_N f(u) - \kappa \lap_N u$. Using Lawson transformation \cite{lawson1967generalized} for the unknown $\bm{u}(t)$, i.e., $\bm{v}(t) = e^{-t L_\kappa} \bm{u}(t)$, we obtain an equivalent system of \eqref{eqn:stab-reac-diff} in the following form:
\begin{equation}
\label{eqn:lawsonform}
\left\{
	\begin{aligned}
		\bm{v}_t &= e^{- t L_\kappa} N_\kappa(e^{t L_\kappa}\bm v),\\
		\bm{v}(0)&= \bm u_0(\bm{x}).
	\end{aligned}
\right.	
\end{equation}

Let $\tau$ represent the time step. By employing an $s$-stage, $p$th-order explicit Runge--Kutta method to solve the reformulated system \eqref{eqn:lawsonform} and recovering the variable $\bm{v}$ back to $\bm{u}$ via $\bm u(t) = \mathrm{e}^{t L_\kappa} \bm v(t)$, we obtain the IFRK method:
\begin{equation}
	\label{eqn:sifrk}
	\left\{
		\begin{aligned}
			\bm{u}_{n,0} &= \bm{u}^n,\\
        	\bm{u}_{n,i} &= e^{c_i\tau L_\kappa}\big(\bm u^{n} + \tau\sum_{j=0}^{i-1}a_{i,j}e^{-c_j\tau L_\kappa} N_\kappa(\bm{u}_{n,j})\big),\quad i = 1,\dots,s,\\
			\bm{u}^{n+1} &= \bm{u}_{n,i},
		\end{aligned}
	\right.
\end{equation}
where $\bm{u}_{n,i}$ represents the approximation to $\bm{u}(t_{n,i})$ with $t_{n,i} = t_n + c_i\tau$ and $t_n = n \tau$. Butcher coefficients $a_{i,j}$ are constrained by certain accuracy requirements (Chapter 5 of \cite{gottlieb2011strong}). The abscissas $c_i$ are defined as $c_0 = 0$, $c_i = \sum_{j=0}^{i-1}a_{i,j}$ for $1 \leq i\leq s$, and $c_s = 1$ for consistency. 

Despite numerous advantages of the IFRK scheme in addressing the stiffness and preserving the strong stability for the hyperbolic conservation laws \cite{isherwood2018strong}, it has a notable limitation in that it cannot preserve the equilibrium states of the problem. Let $\bm u^* \neq \bm 0, \pm \bm 1$ denote a nontrivial equilibrium state to \eqref{eqn:stab-reac-diff}, i.e., $L_\kappa {\bm u}^* + {N_\kappa(\bm u^*)} = 0$. Denoting the discrete Fourier transform of $\bm u^*$ as $\hat{\bm u}^*$, and assuming that $\hat{\bm u}_{n,j} = \hat{\bm u}^*, j = 0, 1, \dots, i-1$, when $c_i \tau \neq 0$, we apply the discrete Fourier transform to the formulation \eqref{eqn:sifrk} and obtain
\begin{equation}
\label{eqn:exponential_damping}
\begin{aligned}
    \hat{\bm u}_{n, i} & = \mathrm{e}^{-{c}_i \tau \Lambda} ( \hat{\bm u}^n + \tau \sum_{j = 0}^{i-1} a_{i,j} \mathrm{e}^{c_j \tau \Lambda} [\underbrace{\widehat{ N_\kappa(\bm u_{n,j})} - \Lambda \hat{\bm u}_{n,j}}_{ = 0} + \Lambda \hat{\bm u}_{n, j} ] ) \\
    & = \mathrm{e}^{-{c}_i \tau \Lambda} \big(I + \tau \sum_{j = 0}^{i-1} a_{i,j}  \mathrm{e}^{c_j \tau \Lambda} \Lambda \big)\hat{\bm u}^* \\
    & \neq \hat{\bm u}^*, \quad  i \leq s,
    \end{aligned}
\end{equation}
where $\Lambda = \mathrm{diag}([\lambda_0, \dots, \lambda_{N-1}])$ is a diagonal matrix that consists of eigenvalues of the matrix $-L_\kappa = \epsilon^2 \lap_N^2 - \kappa \lap_N$ and the nonlinear term is evaluated in the physical space and subsequently transformed to the Fourier space.

Because $\mathrm{e}^{c_i \tau \Lambda}$ is not equal to $I + \tau  \sum_{j = 0}^{i-1} a_{i,j} \mathrm{e}^{c_j \tau \Lambda} \Lambda$ unless $c_i \tau \Lambda = 0$, the IFRK method \eqref{eqn:sifrk} will introduce unwanted exponential damping when $c_i > c_j$ for $i > j$ (or growth if $c_i < c_j$) to the Fourier modes of the equilibrium state, particularly those at high wavenumbers. Although the exponential damping effect tends to suppress instabilities such as those arising from an aliasing error or underresolution \cite{hou1994removing}, they can lead to a time-step-dependent smoothing of the high gradients associated with the interfacial layers \cite{leo1998diffuseactamat}. Consequently, this causes a discrepancy between numerical and analytical solutions at high wavenumbers.

Since our main focus is on the equilibrium states of the gradient flow system. To address this problem without destroying the convergence order, we utilize the $k$th-order Taylor polynomial $\phi_k(z) = 1 + z + \cdots + \frac{1}{k!}z^k $ to approximate the exponential function $e^{z}$. Specifically, we follow \cite{zhang2023third} and approximate the exponential function $e^{c_i \tau \Lambda}$ in \eqref{eqn:exponential_damping} using $\phi_i(c_i\tau \Lambda)$ to obtain
\begin{equation}
\label{eqn:equilirel}
    \hat{\bm u}_{n, i} = \frac{1}{\phi_i(c_i \tau \Lambda)} [I + \tau \sum_{j = 0}^{i-1} a_{i,j} \phi_j(c_j \tau \Lambda) \Lambda] \hat{\bm u}^* =: \hat{d}_i \hat{\bm u}^*, \quad i = 1, \dots, s.
\end{equation}

To preserve equilibrium states, we require that $\hat{d}_i \equiv I, i = 1, \dots, s$. Consider $s \leq 4$, the expansions of $\hat{d}_i$ can be expressed as
\begin{align} \nonumber
  \hat{d}_1 & = \frac{1}{\phi_1(c_1 \tau \Lambda)}[ I + c_1 \tau \Lambda], \\ \nonumber
  \hat{d}_2 & = \frac{1}{\phi_2(c_2 \tau \Lambda)}[ I + c_2 \tau \Lambda +  a_{2, 1} c_1 (\tau \Lambda)^2 ], \\ \nonumber
  \hat{d}_3 & = \frac{1}{\phi_3(c_3 \tau \Lambda)}\big(I + c_3 \tau \Lambda + (a_{3, 1} c_1 + a_{3, 2}c_2 )(\tau \Lambda)^2 +  a_{3, 2} a_{1} c_1 (\tau \Lambda)^3  \big), \\ \nonumber
  \hat{d}_4 & = \frac{1}{\phi_4(c_4 \tau \Lambda)}\big(I + c_4 \tau \Lambda + \sum_{j = 1}^3 a_{4, j} c_j (\tau \Lambda)^2  + [a_{4, 2} a_{2, 1} c_1 + a_{4, 3}(a_{3, 1} c_1 + a_{3, 2} c_2)] (\tau \Lambda)^3 + a_{4, 3} a_{3, 2} a_{2, 1} c_1 (\tau \Lambda)^4   \big).
  \end{align}

Combine with the order conditions of the RK scheme in Table \ref{tab:rootedtreeso}, we arrived at the following conclusion:
\begin{enumerate}
  \item RK(1, 1) satisfies $\hat{d}_1 \equiv I$;
  \item RK(2, 2) satisfies $\hat{d}_i \equiv I, i = 1, 2$;
  \item RK(3, 3) satisfies $\hat{d}_i \equiv I, i = 1, 3$. $\hat{d}_2 \equiv I$ requires the RK coefficients additional satisfy $a_{2, 1} c_1 = \frac{c_2^2}{2}$. In the literature, we find that Heun's method [RK(3, 3) in (\ref{eqn:rk123})] meets this condition;
  \item RK(4, 4) satisfies $\hat{d}_i \equiv I, i = 1, 4$. Ensuring that $\hat{d}_i \equiv I, i = 2, 3$ requires the RK coefficients further satisfy $a_{3,1}c_1 + a_{3,2}c_2 = \frac{c_3^2}{2}$, $a_{3,2}a_{2,1}c_1= \frac{c_3^3}{6}$, and $a_{2,1}c_1 = \frac{c_2^2}{2}$. Since the explicit RK(4,4) method has only ten parameters, and there are eleven constraints in total, this signifies that there is an insufficient degree of freedom for determining the parameters. Therefore, no four-stage, fourth-order explicit RK coefficients can satisfy these conditions. 
\end{enumerate}

\begin{table}[!h] 
\caption{Order conditions for the explicit Runge--Kutta methods.}
\label{tab:rootedtreeso}
\begin{center}
\begin{tabular}{|c|c|c|c|c|} \hline
Tree & Order 1 & Order 2 & Order 3  & Order 4  \\ \hline
 \rootedtree []
 & $b^T \bm 1 = 1$
 & $b^T \bm 1 = 1$
 & $b^T \bm 1 = 1$
 & $b^T \bm 1 = 1$ \\
\rootedtree [*]
& & $b^T c = \frac{1}{2}$
& $b^T c = \frac{1}{2}$
& $b^T c = \frac{1}{2}$ \\
\rootedtree [*, *]
& & & $b^T c^2 = \frac{1}{3}$
& $b^T c^2 = \frac{1}{3}$\\
\rootedtree [[*], *] & &
& $b^T A c = \frac{1}{6}$
& $b^T A c = \frac{1}{6}$  \\
\rootedtree [*, *, *] & &  & & $b^T c^3 = \frac{1}{4}$ \\
\rootedtree [*, [*],*] & & & & $b^T [c \cdot (A c)] = \frac{1}{8}$\\
\rootedtree [[*, *], ] & & & & $ b^T A c^2 = \frac{1}{12}$ \\
\rootedtree [[[*],*, *], *] & & & & $ b^T A^2 c = \frac{1}{24}$ \\\hline
	\end{tabular}
\end{center}
Here, $\bm 1 = [1, \dots, 1]^T \in \mathbb{R}^{s}$, $c = [c_0, c_1, \dots, c_{s-1}]^T$, $A = (a_{i,j})_{i, j = 0, \dots, s-1} \in \mathbb{R}^{s\times s}$, $b = [a_{s,0}, a_{s,1}, \dots, a_{s, s-1}]^T$, and powers of vectors are element-wise.
\end{table}

Some up to third-order underlying Runge-Kutta schemes that satisfy both the order condition and $\hat{d}_i \equiv I$ for $i = 1, \dots, s$ are presented below:
\begin{equation}\label{eqn:rk123}
    \begin{array}{ccc}
    \mathrm{RK(1, 1):}
	\begin{array}{c|c}
		0 & 0 \\ \hline
		1 & 1
	\end{array},  & 
  \mathrm{RK(2, 2):} 
  \begin{array}{c|cc}
    0 & 0 &0\\
    1 & 1 & 0\\ \hline
    1 & \frac{1}{2} & \frac{1}{2}
    \end{array}, &
    \mathrm{RK(3, 3):}
    \begin{array}
      {c|ccc}
      0 & 0 & 0 & 0 \\
      \frac{1}{3} & \frac{1}{3} & 0 & 0  \\
      \frac{2}{3} & 0 & \frac{2}{3}  & 0\\ \hline
      1 & \frac{1}{4} & 0 & \frac{3}{4}
    \end{array}. \\
     \text{Forward Euler scheme}\quad  & \text{Heun's second order scheme}\quad  & \text{Heun's third order scheme (p. 135 in \cite{hairer1993solving}) }
    \end{array}
\end{equation}

By replacing $\mathrm{e}^{-c_i \tau L_\kappa}$ with $\phi_i(-c_i\tau L_\kappa)$ in the IFRK framework \eqref{eqn:sifrk}, we obtain 
\begin{equation}
	\label{eqn:efrk}
	\left\{
		\begin{aligned}
			\bm{u}_{n,0} &= \bm{u}^n,\\
        	\bm{u}_{n,i} &= \left(\phi_i(-c_i\tau L_\kappa)\right)^{-1}\big(\bm u^n + \tau\sum_{j=0}^{i-1}a_{i,j} \phi_j(-c_j\tau L_\kappa) N_\kappa(\bm{u}_{n,j})\big),\quad i = 1,\dots,s,\\
			\bm{u}^{n+1} &= \bm{u}_{n,i}.
		\end{aligned}
	\right.
\end{equation}

Since the requirements $\hat{d}_i \equiv I$ for $i = 1, \dots, s$ guarantee the preservation of equilibrium states, and eliminates the exponential effects, the formulation \eqref{eqn:efrk} is referred to as the exponential-free Runge--Kutta (EFRK) framework. Some concrete EFRK schemes based on the underlying RK Butcher tableaux \eqref{eqn:rk123} are presented as below.

\begin{scm}[EFRK(1, 1)] \label{ex:erk11}
\begin{align}\label{eqn:erk11}
  \bm u^{n+1} & = \left(\phi_1(-\tau L_\kappa)\right)^{-1}\big(\bm u^n + \tau N_\kappa(\bm u^n)\big),
\end{align}
where $\phi_1(-\tau L_\kappa) = I - \tau L_\kappa$.
\end{scm}

The EFRK(1, 1) scheme is indeed the stabilization semi-implicit scheme investigated by He et al. \cite{he2007large} and Li et al. \cite{li2016characterizing}.

\begin{scm}[EFRK(2, 2)]\label{ex:erk22}
\begin{equation}\label{eqn:erk22}
\left\{
\begin{aligned}
 \bm u_{n, 1} & = \left(\phi_1(-\tau L_\kappa)\right)^{-1}\big(\bm u^n + \tau N_\kappa(\bm u^n)\big), \\
\bm u^{n+1} & = \left(\phi_2(-\tau L_\kappa)\right)^{-1}\big(\bm u^n + \frac{1}{2}\tau N_\kappa(\bm u_{n, 0}) + \frac{1}{2}\tau \phi_1(-\tau L_\kappa) N_\kappa(\bm u_{n, 1})\big),
\end{aligned}  \right.
\end{equation}
where $\phi_1(-\tau L_\kappa) = I - \tau L_\kappa$, $\phi_2(-\tau L_\kappa) = I - \tau L_\kappa + \frac{1}{2} (\tau L_\kappa)^2$.
\end{scm}

\begin{scm}[EFRK(3, 3)]\label{ex:erk33}
\begin{equation}
  \label{eqn:erk33}
  \left\{
  \begin{aligned}
    \bm u_{n, 1} & = \big(\phi_1(-\frac{1}{3} \tau L_\kappa)\big)^{-1}\big(\bm u^n + \frac{1}{3}\tau N_\kappa(\bm u_{n, 0})\big), \\
    \bm u_{n, 2} & = \big(\phi_2(-\frac{2}{3} \tau L_\kappa)\big)^{-1}\big(\bm u^n + \frac{2}{3}\tau \phi_1(-\tau L_\kappa) N_\kappa(\bm u_{n, 1})\big), \\
    \bm u_{n, 3} & = \big(\phi_3(-\tau L_\kappa)\big)^{-1}\big(\bm u^n + \frac{1}{4} \tau N_\kappa(\bm u_{n, 0}) + \frac{3}{4} \tau \phi_2(-\tau L_\kappa) N_\kappa(\bm u_{n, 2})\big),
  \end{aligned}
  \right.
\end{equation}
where 
\begin{align*}
&\phi_1(-\frac{1}{3}\tau L_\kappa) = I - \frac{1}{3}\tau L_\kappa,\\ 
&\phi_2(-\frac{2}{3}\tau L_\kappa) = I - \frac{2}{3}\tau L_\kappa +  \frac{1}{2}(\frac{2}{3} \tau L_\kappa)^2,\\ 
&\phi_3(-\tau L_\kappa) = I - \tau L_\kappa + \frac{1}{2} (\tau L_\kappa)^2 - \frac{1}{6} (\tau L_\kappa)^3.
\end{align*}
\end{scm}

Next, we demonstrate that the EFRK framework conserves the mass. We recall a useful lemma on matrix functions.
\begin{lem}\cite{higham2008functions}
	\label{lem:matrixFunc}
If $\phi$ is defined on the spectrum of $M\in \mathbb{C}^{n\times n}$, i.e., the values 
\begin{equation}
	\phi^{(j)}(\lambda_i),\quad 0\leq j\leq n_i-1,\quad 1\leq i\leq n
\end{equation}
exist, where $\{\lambda_i\}_{i=1}^n$ are the eigenvalues of $M$, and $n_i$ is the order of the largest Jordan block where $\lambda_i$ appears, we have
\begin{enumerate}
	\item $\phi(M)$ commutes with $M$; 
	\item $\phi(M^T) = \phi(M)^T$;
	\item the eigenvalues of $\phi(M)$ are $\{\phi(\lambda_i)\}_{i=1}^{n}$.
\end{enumerate}
\end{lem}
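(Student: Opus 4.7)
The plan is to reduce everything to the Jordan canonical form of $M$ and then invoke the Hermite-interpolation characterization of a matrix function. Write $M = PJP^{-1}$ with $J = \mathrm{diag}(J_1, \ldots, J_k)$, where each Jordan block has the form $J_i = \lambda_i I_{n_i} + N_i$ and $N_i$ denotes the nilpotent shift of size $n_i$. Since $\phi$ is defined on the spectrum of $M$, i.e., $\phi^{(j)}(\lambda_i)$ exists for $0 \leq j \leq n_i - 1$, the finite Taylor series $\phi(J_i) := \sum_{j=0}^{n_i-1} \frac{\phi^{(j)}(\lambda_i)}{j!} N_i^j$ makes sense block by block, and one sets $\phi(M) := P\phi(J)P^{-1}$. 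Equivalently, $\phi(M) = p(M)$ for the Hermite interpolating polynomial $p$ that matches $\phi^{(j)}(\lambda_i)$ for all admissible $i, j$. This equivalence is the backbone of the whole argument.

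I would first prove (3), which is the most transparent: each $\phi(J_i)$ is upper triangular with the single value $\phi(\lambda_i)$ repeated $n_i$ times on the diagonal (all powers $N_i^j$ with $j \geq 1$ are strictly upper triangular), so the spectrum of $\phi(J)$, and therefore of $\phi(M)$ which is similar to $\phi(J)$, is precisely $\{\phi(\lambda_i)\}_{i=1}^n$. For (1), observe that $\phi(J_i)$ is by construction a polynomial in $N_i$, while $J_i = \lambda_i I + N_i$ is also a polynomial in $N_i$; hence $\phi(J_i) J_i = J_i \phi(J_i)$. Passing to the block-diagonal level and conjugating by $P$ yields $\phi(M) M = M \phi(M)$.

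For (2), I would \emph{not} work through the Jordan form of $M^T$ directly, since the similarity transformation changes. Instead, the polynomial route is clean: $M$ and $M^T$ share the same spectrum together with identical Jordan block sizes, so the same Hermite polynomial $p$ realises both $\phi(M)$ and $\phi(M^T)$, and polynomial evaluation commutes with transposition, giving $\phi(M^T) = p(M^T) = p(M)^T = \phi(M)^T$. The main obstacle is establishing the equivalence between the two candidate definitions of $\phi(\cdot)$, namely the blockwise Taylor construction and the Hermite-interpolation construction; this reduces to checking that $p(J_i)$ reproduces the Taylor expansion above, which follows from $p^{(j)}(\lambda_i) = \phi^{(j)}(\lambda_i)$ together with the triangular structure of $J_i$. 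Once this equivalence is in place, parts (1)–(3) become short corollaries.
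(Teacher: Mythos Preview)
Your proof proposal is correct and follows the standard textbook approach (Jordan form plus the Hermite interpolating polynomial), which is precisely how this result is developed in Higham's monograph. Note, however, that the paper does not prove this lemma at all: it is stated with a citation to \cite{higham2008functions} and used as a black box, so there is no ``paper's own proof'' to compare against. Your write-up is a faithful reconstruction of the cited source's argument.
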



\begin{thm}
	The EFRK method \eqref{eqn:efrk} conserves the mass of the semi-discrete CH system, that is,
	\begin{equation*}
		\langle \bm u^{n+1} - \bm u^n,\bm 1\rangle = 0,\quad n\geq 0,
	\end{equation*}
	where $\bm 1 = [1,\dots,1]^T \in \mathbb{R}^N$.
\end{thm}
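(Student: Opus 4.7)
The plan is to prove a stronger stage-wise identity $\langle \bm u_{n,i}, \bm 1\rangle = \langle \bm u^n, \bm 1\rangle$ for every $i = 0, 1, \dots, s$ by induction on $i$, and then take $i = s$. The argument rests on two structural facts about the semi-discrete operators that I would establish as a preliminary step. First, because the constant vector lies in the kernel of the self-adjoint operator $\lap_N$, one has $\langle \lap_N \bm w, \bm 1\rangle = \langle \bm w, \lap_N \bm 1\rangle = 0$ for every $\bm w \in \mathcal{M}_N$; since $L_\kappa = \lap_N(-\epsilon^2\lap_N + \kappa I)$ and $N_\kappa(\bm u) = \lap_N(f(\bm u) - \kappa \bm u)$ both carry an outer factor of $\lap_N$, this immediately yields $\langle L_\kappa \bm w, \bm 1\rangle = 0$ and $\langle N_\kappa(\bm w), \bm 1\rangle = 0$.

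Second, because $L_\kappa$ is a polynomial in the self-adjoint matrix $\lap_N$, Lemma \ref{lem:matrixFunc} tells me that each $\phi_i(-c_i\tau L_\kappa)$ is also self-adjoint, commutes with $L_\kappa$, and has spectrum $\{\phi_i(-c_i\tau \mu)\}$ as $\mu$ ranges over the eigenvalues of $L_\kappa$. A quick check shows that the eigenvalues of $-L_\kappa$ are non-negative, so every eigenvalue of $\phi_i(-c_i\tau L_\kappa)$ is at least $\phi_i(0) = 1$, which guarantees invertibility. Moreover, since $L_\kappa \bm 1 = \bm 0$, evaluating $\phi_i(-c_i\tau L_\kappa)\bm 1$ collapses to $\phi_i(0)\bm 1 = \bm 1$, and hence $\phi_i(-c_i\tau L_\kappa)^{-1}\bm 1 = \bm 1$ as well.

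With these tools in place I would pair $\bm 1$ with the stage formula in \eqref{eqn:efrk} and transfer $\phi_i(-c_i\tau L_\kappa)^{-1}$ to the $\bm 1$-slot using self-adjointness, where it acts as the identity. Then I would transfer each $\phi_j(-c_j\tau L_\kappa)$ in the explicit sum across the inner product in the same way, reducing the $j$-th summand to $\tau a_{i,j}\langle N_\kappa(\bm u_{n,j}), \bm 1\rangle = 0$ by the first structural fact above. The remaining term is $\langle \bm u^n, \bm 1\rangle$, which closes the induction; setting $i = s$ yields the claim. The only real book-keeping is justifying the self-adjoint transfer of the matrix functions $\phi_i(-c_i\tau L_\kappa)^{\pm 1}$, but since each is a bona fide polynomial (or rational function whose numerator/denominator are polynomials) in the symmetric $\lap_N$, I expect no genuine obstacle.
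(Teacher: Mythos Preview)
Your proposal is correct and follows essentially the same approach as the paper: both arguments exploit the self-adjointness of $\phi_i(-c_i\tau L_\kappa)$ together with $\lap_N \bm 1 = \bm 0$ (hence $L_\kappa \bm 1 = \bm 0$ and $\langle N_\kappa(\cdot),\bm 1\rangle = 0$) to reduce the stage formula to $\langle \bm u_{n,i},\bm 1\rangle = \langle \bm u^n,\bm 1\rangle$. The only cosmetic difference is that the paper multiplies through by $\phi_i$ and then shows $\langle \phi_i \bm u_{n,i},\bm 1\rangle = \langle \bm u_{n,i},\bm 1\rangle$, whereas you transfer $\phi_i^{-1}$ directly across the inner product; also note that your induction on $i$ is harmless but unnecessary, since $\langle N_\kappa(\bm u_{n,j}),\bm 1\rangle = 0$ holds regardless of the mass of $\bm u_{n,j}$.
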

\begin{proof}
	Note that $\lap_N$ is symmetric and commutes with $\phi_j(-c_j\tau L_\kappa)$. Multiplying both sides of \eqref{eqn:efrk} with $\phi_i(-c_i\tau L_\kappa)$ and taking the $\ell^2$ inner product with $\bm 1$, we obtain
	\begin{equation}
		\label{eqn:mass1}
		\begin{aligned}
		\langle \phi_i(-c_i\tau L_\kappa) \bm u_{n,{i}}, \bm 1\rangle &=  \langle \bm u^n,\bm 1\rangle + \langle \tau \sum_{j=0}^{i-1} a_{i,j} \phi_j(-c_j\tau L_\kappa) N_\kappa(\bm u_{n,j}),\bm 1 \rangle\\
                & = \langle \bm u^n,\bm 1\rangle + \langle \tau \sum_{j=0}^{i-1} a_{i,j} \phi_j(-c_j\tau L_\kappa)[f(\bm u_{n,j}) - \kappa \bm u_{n, j}],\lap_N \bm 1 \rangle \\
                & = \langle \bm u^n, \bm 1\rangle, \quad i = 1,\dots,s.\\
		\end{aligned}
	\end{equation}
    Based on lemma \ref{lem:matrixFunc}, the functions $\phi_i(-c_i\tau L_\kappa)$ are symmetric. Then, it holds that
	\begin{equation}
		\label{eqn:mass2}
		\begin{aligned}
		  \langle \phi_i(-c_i\tau L_\kappa) \bm u_{n,i},\bm 1\rangle &= \langle \bm u_{n,i},\phi_i(-c_i\tau L_\kappa)\bm 1\rangle \\
		  & = \langle \bm u_{n,i},\bm 1\rangle - \langle \bm u_{n,i},\tau\sum_{j=0}^{i-1}a_{ij}\phi_j(-c_j\tau L_\kappa)L_\kappa \bm 1\rangle  \\
            & = \langle \bm u_{n, i}, \bm 1\rangle, \quad i = 1, \dots, s.
		\end{aligned}
	\end{equation}
	Substituting equation \eqref{eqn:mass2} into \eqref{eqn:mass1} yields
	\begin{equation*}
		  \langle \bm u_{n,i}- \bm u^n,1\rangle = 0,\quad i = 1,\dots,s.
	\end{equation*}
	This completes the proof because $\bm u^{n+1} = \bm u_{n, s}$.
\end{proof}



\section{Stability analysis}
\label{sec:4}

\subsection{Energy stability of EFRK scheme}
Because of the preservation of equilibrium states, we demonstrate that a certain class of first- to third-order EFRK schemes unconditionally decreases the energy of the CH equation when the stabilization parameter is sufficiently large. First, we have the following lemma, which will play an important role in our analysis of energy stability.

\begin{lem}\label{lem:sumomega0}
   Let $\bm u_{n,i}$ be the $i$th-stage solution ($i = 0, 1, \dots, s$) obtained by the stabilization EFRK schemes \eqref{eqn:efrk}; then, it holds that
  \begin{align}\label{eqn:sumomega0}
    L_\kappa \bm u_{n,i} + N_\kappa(\bm u_{n, i-1}) = \tau^{-1}\sum_{k = 0}^i \omega_{i,k} \bm u_{n,k}, \quad \text{with~} \sum_{k = 0}^i \omega_{i,k} = 0,
  \end{align}
  where $\omega_{i,k}$ depends on the RK coefficients $a_{i,j}$, $\tau$, and $L_\kappa$.
\end{lem}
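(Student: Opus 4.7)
The plan is to combine two structural facts: the stage equation of the EFRK scheme \eqref{eqn:efrk} and the equilibrium-preserving operator identity
\[
\phi_i(-c_i\tau L_\kappa) \;=\; I - \tau L_\kappa \sum_{j=0}^{i-1} a_{i,j}\phi_j(-c_j\tau L_\kappa),
\]
which is the operator-form translation of the condition $\hat d_i \equiv I$ obtained from \eqref{eqn:equilirel}. Applying this identity to $\bm u_{n,i}$ and subtracting the scheme equation yields the reorganized relation
\[
\bm u_{n,i} - \bm u_{n,0} \;=\; \tau \sum_{j=0}^{i-1} a_{i,j}\phi_j(-c_j\tau L_\kappa)\bigl[L_\kappa \bm u_{n,i} + N_\kappa(\bm u_{n,j})\bigr],
\]
so that the increment $\bm u_{n,i}-\bm u_{n,0}$ becomes a linear combination of the quantities $F_{i,j} := L_\kappa\bm u_{n,i} + N_\kappa(\bm u_{n,j})$ with operator-valued coefficients $\tau a_{i,j}\phi_j(-c_j\tau L_\kappa)$.

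I would then proceed by induction on the stage index $i$. The base case $i=1$ is immediate: the relation reduces to $\bm u_{n,1} - \bm u_{n,0} = \tau a_{1,0} F_{1,0}$, giving $F_{1,0} = (c_1\tau)^{-1}(\bm u_{n,1} - \bm u_{n,0})$ with scalar weights summing to zero. For the inductive step, I split the sum into the $j=i-1$ term and the terms with $j<i-1$, using the algebraic decomposition
\[
F_{i,j} \;=\; F_{j+1,j} + L_\kappa(\bm u_{n,i} - \bm u_{n,j+1}),
\]
so that the inductive hypothesis rewrites each $F_{j+1,j}$ as $\tau^{-1}\sum_k \omega_{j+1,k}\bm u_{n,k}$. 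Transposing these now-known combinations of $\{\bm u_{n,k}\}$ to the right-hand side, the remaining left-hand side is $\tau a_{i,i-1}\phi_{i-1}(-c_{i-1}\tau L_\kappa) F_{i,i-1}$. Inverting $\phi_{i-1}(-c_{i-1}\tau L_\kappa)$ is safe because $-c_{i-1}\tau L_\kappa$ is positive semidefinite and $\phi_{i-1}(z)\geq 1$ for $z\geq 0$, so this operator has spectrum in $[1,\infty)$; performing the inversion delivers the claimed representation $F_{i,i-1} = \tau^{-1}\sum_{k=0}^i \omega_{i,k}\bm u_{n,k}$.

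The property $\sum_k \omega_{i,k} = 0$ is tracked at every step. The reorganized identity has left-hand side $\bm u_{n,i}-\bm u_{n,0}$ with weights $\{1,-1\}$ summing to zero; each decomposition $L_\kappa(\bm u_{n,i}-\bm u_{n,j+1})$ contributes $+L_\kappa$ and $-L_\kappa$ that cancel; the inductive hypothesis gives $\sum_k \omega_{j+1,k}=0$; and the final multiplication by the commuting inverse $[\tau a_{i,i-1}\phi_{i-1}(-c_{i-1}\tau L_\kappa)]^{-1}$ distributes linearly over the sum and preserves the zero total. Equivalently, an a posteriori check by substituting any equilibrium $\bm u_{n,k}\equiv\bm u^*$ (which is preserved by the scheme thanks to $\hat d_i\equiv I$) makes the left-hand side vanish and hence forces $(\sum_k\omega_{i,k})\bm u^*=0$, which, applied across a sufficiently rich family of equilibria, yields the operator identity.

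The main obstacle is the bookkeeping of the operator coefficients during the induction---polynomials and inverses of polynomials in $L_\kappa$ generated by repeated applications of the decomposition. Because every coefficient is built from $L_\kappa$ alone all operators commute, and the equilibrium-preserving identity is precisely the algebraic mechanism that orchestrates the cancellations needed to retain the sum-to-zero property throughout the triangular solve.
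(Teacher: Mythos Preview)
Your proposal is correct and follows essentially the same inductive route as the paper: solve the $i$th stage relation for $L_\kappa\bm u_{n,i}+N_\kappa(\bm u_{n,i-1})$ by inverting $a_{i,i-1}\phi_{i-1}(-c_{i-1}\tau L_\kappa)$, feed in the inductive hypothesis for the lower stages, and use the equilibrium-preserving identity $\phi_i=I-\tau L_\kappa\sum_{j}a_{i,j}\phi_j$ to get $\sum_k\omega_{i,k}=0$. The only cosmetic difference is that you invoke the identity \emph{up front} to obtain the reorganized relation $\bm u_{n,i}-\bm u_{n,0}=\tau\sum_j a_{i,j}\phi_j F_{i,j}$, whereas the paper first carries out the substitution and applies the identity at the end to evaluate the coefficient sum; the underlying algebra is the same.
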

\begin{proof}
Let us denote $\phi_i$ as abbreviations of $\phi_i(-c_i\tau L_\kappa), i = 0, 1, \dots, s$. We proceed with a proof by induction.

When $i = 1$, we have
\begin{align*}
  L_\kappa \bm u_{n, 1} + N_\kappa(\bm u_{n,0}) & = L_\kappa \bm u_{n, 1} + (a_{1, 0} \phi_0 \tau)^{-1} \big( \phi_1 \bm u_{n, 1} - \bm u_{n, 0}\big) \\
  & = (a_{1,0} \phi_0 \tau)^{-1}\big((a_{1, 0} \phi_0 \tau L_\kappa + \phi_1) \bm u_{n, 1} - \bm u_{n, 0}\big) \\
  & = \tau^{-1}(\omega_{1,1} \bm u_{n, 1} + \omega_{1,0} \bm u_{n, 0}),
\end{align*}
where $\omega_{1,1} = (a_{1,0} \phi_0 )^{-1}(a_{1,0} \phi_0 \tau L_\kappa + \phi_1)$, $\omega_{1,0} = -(a_{1, 0} \phi_0 )^{-1}$. Using the definition of $\phi_1$, it is clear that $\omega_{1,1} + \omega_{1,0} = 0$. 

Assume that result \eqref{eqn:sumomega0} holds for $i = 1, \dots, \ell-1$, with $\ell \leq s$; then, $N_\kappa(\bm u_{n, i-1}) = \tau^{-1}\sum_{k = 0}^{i} \omega_{i, k} \bm u_{n, k} - L_\kappa \bm u_{n, i}$.
For case $i = \ell$, we have
\begin{align*}
   & L_\kappa \bm u_{n, \ell} + N_\kappa(\bm u_{n, \ell-1}) \\
  =& L_\kappa \bm u_{n, \ell} + (a_{\ell,\ell-1} \phi_{\ell-1} \tau)^{-1}\big( \phi_\ell \bm u_{n, \ell} - \bm u_{n, 0} - \sum_{j = 0}^{\ell-2} a_{\ell,j} \phi_j \tau N_\kappa(\bm u_{n, j})\big) \\
  =& L_\kappa \bm u_{n, \ell} + (a_{\ell,\ell-1} \phi_{\ell-1} \tau)^{-1}\big( \phi_\ell \bm u_{n, \ell} - \bm u_{n, 0} - \sum_{j = 0}^{\ell-2} a_{\ell,j} \phi_j \tau (\tau^{-1} \sum_{k = 0}^{j+1} \omega_{j+1,k} \bm u_{n, k} + L_\kappa \bm u_{n,j+1}) \big), \quad \ell \leq s.
\end{align*}
Thus, $L_\kappa \bm u_{n, \ell} + N_\kappa(\bm u_{n, \ell-1})$ is decomposed into a combination of $\bm u_{n, k}$, and the summation of coefficients satisfies
\begin{align*}
  \sum_{k = 0}^{\ell} \omega_{\ell,k} &= \tau L_\kappa + (a_{\ell,\ell-1} \phi_{\ell-1} )^{-1}\big( \phi_\ell - I + \sum_{j = 0}^{\ell-2} a_{\ell,j} \phi_j \tau L_\kappa\big)\\ 
                                &= (a_{\ell,\ell-1} \phi_{\ell-1} )^{-1}\big(\phi_\ell - I + \sum_{j = 0}^{\ell-1} a_{\ell,j} \phi_j \tau L_\kappa\big) \\
                                &= 0.
\end{align*}
This completes the proof.
\end{proof}

\begin{lem}\label{lem:sumDelta}
The $i$th-stage solution $\bm u_{n, i}$ ($i = 1, \dots, s$) obtained by the stabilization EFRK method satisfies the following relationship:
  \begin{align}\label{eqn:sumDelta}
    L_\kappa \bm u_{n,i} + N_\kappa(\bm u_{n,i-1}) = \tau^{-1}\sum_{j = 1}^{i} \Delta_{i, j} (\bm u_{n, j} - \bm u_{n,j-1}), \quad \text{with~} \Delta_{i,j} = \sum_{k = j}^{i} \omega_{i,k}, j = 1, \dots, i,
  \end{align}
where $\omega_{i,k}$ are the expansion coefficients of $L_\kappa \bm u_{n, i} + N_\kappa(\bm u_{n, i-1})$ in \eqref{eqn:sumomega0}.
\end{lem}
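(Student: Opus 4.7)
The plan is to derive the identity directly from Lemma \ref{lem:sumomega0} by applying summation by parts (Abel's transformation) to the right-hand side of \eqref{eqn:sumomega0}, with the coefficient-sum condition $\sum_{k=0}^i \omega_{i,k}=0$ providing exactly the vanishing boundary term needed to express everything in terms of the differences $\bm u_{n,j}-\bm u_{n,j-1}$.

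First, I would set $\Delta_{i,j}:=\sum_{k=j}^{i}\omega_{i,k}$ for $j=0,1,\ldots,i+1$ (with the convention that $\Delta_{i,i+1}=0$ is an empty sum), and record the two immediate algebraic consequences: (i) $\omega_{i,k}=\Delta_{i,k}-\Delta_{i,k+1}$ for $k=0,1,\ldots,i$, and (ii) $\Delta_{i,0}=\sum_{k=0}^{i}\omega_{i,k}=0$, where (ii) is precisely the sum-zero condition supplied by Lemma \ref{lem:sumomega0}. These two facts are what will make the Abel transform collapse cleanly.

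Next, I would substitute $\omega_{i,k}=\Delta_{i,k}-\Delta_{i,k+1}$ into the identity from Lemma \ref{lem:sumomega0} and split the sum:
\begin{align*}
\sum_{k=0}^{i}\omega_{i,k}\bm u_{n,k}
&=\sum_{k=0}^{i}\Delta_{i,k}\bm u_{n,k}-\sum_{k=0}^{i}\Delta_{i,k+1}\bm u_{n,k}.
\end{align*}
Shifting the index $j=k+1$ in the second sum and isolating the boundary contributions at $j=0$ and $j=i+1$, the two sums fit together as $\sum_{j=1}^{i}\Delta_{i,j}(\bm u_{n,j}-\bm u_{n,j-1})+\Delta_{i,0}\bm u_{n,0}-\Delta_{i,i+1}\bm u_{n,i}$. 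Applying (i) $\Delta_{i,0}=0$ and (ii) $\Delta_{i,i+1}=0$ eliminates both boundary terms, yielding exactly \eqref{eqn:sumDelta} after multiplying by $\tau^{-1}$.

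There is no real obstacle here, but the point that requires care is the choice of index convention for $\Delta_{i,j}$: defining it as the tail sum (as opposed to a head sum) is what makes $\Delta_{i,0}$ coincide with the hypothesis of Lemma \ref{lem:sumomega0}, so that the boundary term at $j=0$ actually vanishes. If one started with a head-sum convention, the vanishing condition would sit at the wrong endpoint and the Abel manipulation would not close. Once this convention is fixed, the rest is routine index bookkeeping and invokes no property of $\bm u_{n,k}$ beyond what is already encoded in Lemma \ref{lem:sumomega0}.
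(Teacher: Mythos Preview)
Your proposal is correct and takes essentially the same approach as the paper: both arguments are Abel summation by parts applied to the identity in Lemma~\ref{lem:sumomega0}, with the sum-zero condition killing the boundary term. The paper carries out the telescoping iteratively while you package it in one step, but the content is identical (note a harmless label swap in your ``Applying (i) \ldots\ and (ii) \ldots'' sentence).
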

\begin{proof}
By utilizing the Lemma \ref{lem:sumomega0} and noting that $\sum_{k = 1}^i \omega_{i,k} = - \omega_{i,0}$, we can obtain 
  \begin{align*}
    \tau\left(L_\kappa \bm u_{n,i} + N_\kappa (\bm u_{n,i-1}) \right) =& \sum_{k = 0}^i  \omega_{i,k} \bm u_{n, k} = \sum_{k = 1}^i \omega_{i,k} \bm u_{n, k} - \sum_{k = 1}^i \omega_{i,k}  \bm u_{n, 0}\\
     =& \sum_{k = 2}^i  \omega_{i,k} \bm u_{n, k} - \sum_{k = 2}^i \omega_{i,k}  \bm u_{n, 1} + \sum_{k = 1}^i \omega_{i,k} (\bm u_{n,1} - \bm u_{n, 0})\\
      &\cdots\\
     =& \omega_{i,i} (\bm u_{n, i} - \bm u_{n ,i-1}) + \dots + \sum_{k = 1}^{i} \omega_{i,k} (\bm u_{n, 1} - \bm u_{n, 0}) \\
     =& \sum_{j = 1}^{i} \Delta_{i, j} (\bm u_{n, j} - \bm u_{n,j-1}).
  \end{align*}
This completes the proof.
\end{proof}

\begin{lem}
\label{lem:energy_decrease}
  Assume that the stabilization parameter satisfies $\kappa \geq \max\limits_{|\xi| \leq \beta} \frac{1}{2}|f'(\xi)|$, where $\beta = \max_{n, i}\{\| \bm u_{n, i}\|_{\ell^\infty} \}$, the two adjacent stage solutions, as derived from the stabilization EFRK method, satisfy the following relationship:
  \begin{align*}
    E(\bm u_{n, i}) - E(\bm u_{n,i-1}) & \leq \langle \bm u_{n, i} - \bm u_{n, i-1}, \lap_N^{-1} \big(L_\kappa \bm u_{n, i} + N_\kappa(\bm u_{n, i-1})\big) \rangle \\
    & = \langle \bm u_{n, i} - \bm u_{n, i-1}, (\tau \lap_N)^{-1} \sum_{j = 1}^i \Delta_{i,j}(\bm u_{n, j} - \bm u_{n, j-1}) \rangle, \quad i = 1, \dots, s.
  \end{align*}
where $\Delta_{i,j}$ is defined in \eqref{eqn:sumDelta}.
\end{lem}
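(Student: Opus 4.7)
The plan is to show that the difference $\text{RHS} - \text{LHS}$ (where $\text{LHS} = E(\bm u_{n,i}) - E(\bm u_{n,i-1})$ and $\text{RHS}$ is the first expression on the right) is nonnegative, and then invoke Lemma \ref{lem:sumDelta} directly for the second equality. The first move is to unwind $\lap_N^{-1}(L_\kappa \bm u_{n,i} + N_\kappa(\bm u_{n,i-1}))$: using $L_\kappa = \lap_N(-\epsilon^2 \lap_N + \kappa I)$ and $N_\kappa(\bm u) = \lap_N f(\bm u) - \kappa \lap_N \bm u$, the inner Laplacian cancels the outer $\lap_N^{-1}$, leaving
\begin{equation*}
\lap_N^{-1}\bigl(L_\kappa \bm u_{n,i} + N_\kappa(\bm u_{n,i-1})\bigr) = -\epsilon^2 \lap_N \bm u_{n,i} + \kappa (\bm u_{n,i} - \bm u_{n,i-1}) + f(\bm u_{n,i-1}).
\end{equation*}
Pairing this with $\bm u_{n,i} - \bm u_{n,i-1}$ produces three clean terms.

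Next I would expand the LHS. For the gradient part $-\tfrac{\epsilon^2}{2}\langle \bm u, \lap_N \bm u\rangle$, the polarization identity combined with the self-adjointness of $\lap_N$ gives
\begin{equation*}
-\tfrac{\epsilon^2}{2}\langle \bm u_{n,i}, \lap_N \bm u_{n,i}\rangle + \tfrac{\epsilon^2}{2}\langle \bm u_{n,i-1}, \lap_N \bm u_{n,i-1}\rangle = -\epsilon^2\langle \bm u_{n,i}-\bm u_{n,i-1}, \lap_N \bm u_{n,i}\rangle + \tfrac{\epsilon^2}{2}\langle \bm u_{n,i}-\bm u_{n,i-1}, \lap_N(\bm u_{n,i}-\bm u_{n,i-1})\rangle.
\end{equation*}
For the double-well part, apply Taylor's theorem with Lagrange remainder componentwise, $F(\bm u_{n,i}) - F(\bm u_{n,i-1}) = f(\bm u_{n,i-1})(\bm u_{n,i}-\bm u_{n,i-1}) + \tfrac{1}{2} f'(\bm \xi)(\bm u_{n,i}-\bm u_{n,i-1})^2$, with $\bm \xi$ taking values between those of $\bm u_{n,i-1}$ and $\bm u_{n,i}$.

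Subtracting the expanded LHS from the simplified RHS, the linear terms $-\epsilon^2\langle \bm u_{n,i}-\bm u_{n,i-1}, \lap_N \bm u_{n,i}\rangle$ and $\langle f(\bm u_{n,i-1}), \bm u_{n,i}-\bm u_{n,i-1}\rangle$ cancel, yielding
\begin{equation*}
\text{RHS} - \text{LHS} = -\tfrac{\epsilon^2}{2}\langle \bm u_{n,i}-\bm u_{n,i-1}, \lap_N(\bm u_{n,i}-\bm u_{n,i-1})\rangle + \bigl\langle \bigl(\kappa - \tfrac{1}{2} f'(\bm \xi)\bigr)(\bm u_{n,i}-\bm u_{n,i-1}),\, \bm u_{n,i}-\bm u_{n,i-1}\bigr\rangle.
\end{equation*}
The first term is nonnegative because $-\lap_N$ is positive semi-definite on $\mathcal{M}_N$; the second is nonnegative because $|\bm \xi| \leq \beta$ (since $\bm \xi$ is sandwiched between stage values) and the stabilization hypothesis gives $\kappa \geq \tfrac{1}{2}|f'(\bm \xi)|$ pointwise. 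Finally, the equality appearing in the statement is an immediate substitution from Lemma \ref{lem:sumDelta}.

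The main obstacle is purely bookkeeping: one must verify that the intermediate point $\bm \xi$ in the Taylor remainder indeed satisfies $\|\bm \xi\|_{\ell^\infty} \leq \beta$ so that the stabilization condition kicks in. This is guaranteed by choosing $\beta = \max_{n,i}\|\bm u_{n,i}\|_{\ell^\infty}$ as in the hypothesis, so no new bound is needed. Beyond that, the proof is a careful algebraic manipulation and relies only on the self-adjointness and (semi-)definiteness of $\lap_N$, together with the elementary Taylor expansion of $F$.
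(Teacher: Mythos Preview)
Your proposal is correct and follows essentially the same argument as the paper: Taylor expansion of $F$ with Lagrange remainder to handle the nonlinear part, the polarization identity for the Dirichlet part, the semi-definiteness of $-\lap_N$ to discard the residual quadratic term, and the stabilization hypothesis $\kappa \geq \tfrac12|f'(\xi)|$ to control the remainder, followed by a direct appeal to Lemma~\ref{lem:sumDelta}. The only cosmetic difference is that the paper first derives the inequality $E(\bm v)-E(\bm u)\le \langle \bm v-\bm u,\lap_N^{-1}[L_\kappa\bm v+N_\kappa(\bm u)]\rangle$ for generic $\bm u,\bm v$ and then specializes, whereas you compute $\text{RHS}-\text{LHS}$ directly for the stage values; the algebra is identical.
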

\begin{proof}

For any $\bm u, \bm v \in \mathbb{R}^N$ with $\|\bm u\|_{\ell^\infty} \leq \beta$ and $\|\bm v\|_{\ell^\infty} \leq \beta$, when $\kappa \geq \max\limits_{|\xi| \leq \beta} \frac{1}{2} |f'(\xi)|$,
applying a Taylor expansion to the difference of nonlinear parts of the energy, and using the identity $\lap_N^{-1} {N}_{\kappa}(\bm u) = f(\bm u) - \kappa \bm u$  yield
\begin{equation}\label{eqn:energyN}
\begin{aligned}
  \langle F(\bm v) - F(\bm u), 1\rangle & = \langle  \bm v - \bm u, f(\bm u)\rangle + \langle \bm v - \bm u, \frac{1}{2}f'(\bm \xi)\cdot (\bm v - \bm u)\rangle \\
  & \leq \langle \bm v - \bm u, f(\bm u)\rangle  +  \kappa  \langle \bm v - \bm u, \bm v - \bm u \rangle \\
  & = \langle \bm v - \bm u, \lap_N^{-1} {N}_{\kappa}(\bm u) \rangle +  \kappa (\bm v - \bm u,  \bm u) + \kappa \langle\bm  v - \bm u, \bm v - \bm u\rangle.
\end{aligned}
\end{equation}
For the difference of the linear parts, it holds that
\begin{equation}
		\label{eqn:energyL}
		\begin{aligned}
			-\frac{\epsilon^2}{2}(\langle \bm v, \lap_N \bm v\rangle - \langle \bm u,\lap_N \bm u\rangle) &= -\langle \bm v-\bm u,\epsilon^2\lap_N \bm v\rangle + \frac{1}{2}\langle \bm v-\bm u,\epsilon^2\lap_N(\bm v-\bm u)\rangle\\
			&= \langle \bm v-\bm u,(-\epsilon^2\lap_N+\kappa)\bm v \rangle - \kappa\langle \bm v- \bm u, \bm v \rangle + \frac{1}{2}\langle \bm v-\bm u,\epsilon^2\lap_N(\bm v-\bm u)\rangle \\
                & = \langle \bm v - \bm u, \lap_N^{-1} L_\kappa \bm v\rangle - \kappa\langle \bm v- \bm u, \bm v \rangle + \frac{1}{2}\langle \bm v-\bm u,\epsilon^2\lap_N(\bm v-\bm u)\rangle.
		\end{aligned}
	\end{equation}
Adding \eqref{eqn:energyN} and \eqref{eqn:energyL} and using the negative semi-definite property of $\lap_N$ yields
\begin{equation}\label{eqn:energyinequal}
\begin{aligned}
  E(\bm v) - E(\bm u) & \leq \langle \bm v - \bm u, \lap_N^{-1}[{L}_{\kappa} \bm v + {N}_{\kappa}(\bm u)] \rangle  + \frac{1}{2}\langle \bm v - \bm u, \epsilon^2 \lap_N (\bm v - \bm u)\rangle \\
  & \leq \langle \bm v - \bm u, \lap_N^{-1} [{L}_{\kappa}\bm  v + {N}_{\kappa}(\bm u)] \rangle.
\end{aligned}
\end{equation}
Letting $\bm v = \bm u_{n, i}$, $\bm u = \bm u_{n, i-1}$, and substituting the equality \eqref{eqn:sumDelta} into \eqref{eqn:energyinequal} give
\begin{align*}
  E(\bm u_{n, i}) - E(\bm u_{n, i-1}) & \leq \langle \bm u_{n, i} - \bm u_{n, i-1}, \lap_N^{-1}[L_\kappa \bm u_{n,i} + N_\kappa(\bm u_{n, i-1})])\rangle \\
  & = \langle \bm u_{n, i} - \bm u_{n, i-1}, (\tau\lap_N)^{-1} \sum_{j = 1}^{i} \Delta_{i,j}(\bm u_{n,j} - \bm u_{n, j-1})\rangle.
\end{align*}
This completes the proof.
\end{proof}

\begin{table}[!h] 
	\caption{Energy stability matrices for an EFRK($s, p$) scheme.\label{tab:escerk}}
\begin{center}
\begin{tabular}{|c|c|c|c|c|} \hline
Stages & 1 & 2 & 3 & $s$ \\ \hline
& $\Delta_{1, 1}$ & $\Delta_{1, 1} - \frac{1}{2} \Delta_{2, 1}$ & $\Delta_{1, 1} - \frac{1}{2} \Delta_{2, 1} - \frac{1}{2} \Delta_{3, 1}$ & $\Delta_{1, 1} - \frac{1}{2} \sum_{j = 2}^s \Delta_{j, 1}$ \\
&  & $\Delta_{2, 1}$ & $\Delta_{2, 1}$ & $\Delta_{2, 1}$ \\
Energy &  & $\Delta_{2, 2} - \frac{1}{2} \Delta_{2, 1}$ & $\Delta_{2, 2} - \frac{1}{2}\Delta_{2, 1} - \frac{1}{2} \Delta_{3, 2}$ & $\Delta_{2, 2} - \frac{1}{2} \Delta_{2, 1} - \frac{1}{2} \sum_{j = 3}^s \Delta_{j, 2}$ \\
stability&  & & $\Delta_{3, 1}$ & $\Delta_{3, 1}$ \\
matrices&  & & $\Delta_{3, 2}$ & $\Delta_{3, 2}$ \\
&  & & $\Delta_{3, 3} - \frac{1}{2} \Delta_{3, 1} - \frac{1}{2} \Delta_{3, 2}$ & $\Delta_{3, 3} - \frac{1}{2} \sum_{j = 1}^2 \Delta_{3, j}$ \\
&  & & & $\Delta_{i,j}, i = 1, \dots, s; j = 1, \dots, i-1$ \\
&  & & & $\Delta_{i,i} - \frac{1}{2} \sum_{j = 1}^{i-1} \Delta_{i,j} - \frac{1}{2} \sum_{j = i+1}^s \Delta_{j,i}, i = 1, \dots, s$ \\ \hline
	\end{tabular}
\end{center}
\end{table}

\begin{thm}
    Under the same assumption of the stabilization parameter in Lemma \ref{lem:energy_decrease} and supposing that the matrices in Table \ref{tab:escerk} are positive semi-definite, the stabilization EFRK($s, p$) method is unconditionally energy stable for any $\tau > 0$.
\end{thm}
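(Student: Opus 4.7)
The plan is to telescope the stage-wise estimate of Lemma \ref{lem:energy_decrease} over $i = 1, \dots, s$ and then show that the right-hand side is a non-positive quadratic form under the positive semi-definiteness hypothesis. Setting $\bm y_i := \bm u_{n,i} - \bm u_{n,i-1}$ and $\tilde{B} := -(\tau\lap_N)^{-1}$, which is symmetric and positive semi-definite on $\mathcal{M}_N^0$, summing the stage-wise inequality gives
\begin{align*}
E(\bm u^{n+1}) - E(\bm u^n) \leq -\sum_{i=1}^{s}\sum_{j=1}^{i}\Delta_{i,j}\langle \bm y_i, \tilde{B}\bm y_j\rangle.
\end{align*}
Each $\Delta_{i,j}$ is a polynomial in $\tau L_\kappa = \tau\lap_N(-\epsilon^2\lap_N + \kappa I)$, so the operators $\Delta_{i,j}$, $\lap_N$, and $\tilde{B}$ share a common Fourier eigenbasis and commute pairwise. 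Mass conservation of each intermediate stage ensures $\bm y_i, \bm y_i + \bm y_j \in \mathcal{M}_N^0$, so $\tilde{B}$ is legitimately positive semi-definite on all vectors that appear.

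Next I would apply the polarization identity
\begin{align*}
\langle \bm y_i, \tilde{B}\bm y_j\rangle = \tfrac{1}{2}\bigl[\langle \bm y_i + \bm y_j, \tilde{B}(\bm y_i + \bm y_j)\rangle - \langle \bm y_i, \tilde{B}\bm y_i\rangle - \langle \bm y_j, \tilde{B}\bm y_j\rangle\bigr]
\end{align*}
to every off-diagonal term ($j < i$). Collecting the coefficient of each $\langle \bm y_k, \tilde{B}\bm y_k\rangle$ — namely, $\Delta_{k,k}$ from the diagonal, $-\tfrac{1}{2}\Delta_{k,j}$ from row $i = k$ with $j < k$, and $-\tfrac{1}{2}\Delta_{i,k}$ from column $j = k$ with $i > k$ — the double sum rearranges as
\begin{align*}
\sum_{i=1}^{s}\sum_{j=1}^{i}\Delta_{i,j}\langle \bm y_i, \tilde{B}\bm y_j\rangle = \sum_{k=1}^{s}\Bigl[\Delta_{k,k} - \tfrac{1}{2}\sum_{j=1}^{k-1}\Delta_{k,j} - \tfrac{1}{2}\sum_{j=k+1}^{s}\Delta_{j,k}\Bigr]\langle \bm y_k, \tilde{B}\bm y_k\rangle + \sum_{1\leq j<i\leq s}\tfrac{1}{2}\Delta_{i,j}\langle \bm y_i + \bm y_j, \tilde{B}(\bm y_i + \bm y_j)\rangle.
\end{align*}
The bracketed coefficients are precisely the diagonal entries in Table \ref{tab:escerk}, and the coefficients $\Delta_{i,j}$ with $j < i$ are the off-diagonal entries. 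By the standing hypothesis, each of these operators is positive semi-definite, and because a pair of commuting self-adjoint operators with non-negative spectra has a non-negative composition, every summand above is non-negative. Hence $E(\bm u^{n+1}) - E(\bm u^n) \leq 0$ regardless of the magnitude of $\tau$.

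The principal obstacle is the bookkeeping needed to identify the SOS template in Table \ref{tab:escerk}; once the polarization is applied and the coefficient collection is performed index-by-index, the rest is a direct invocation of the semi-definiteness assumption. A secondary point worth emphasizing is that the assumption on Table \ref{tab:escerk} reduces, through simultaneous Fourier diagonalization, to a finite set of scalar inequalities indexed by the wavenumber, making the condition explicitly checkable for each candidate Butcher tableau — this is the practical payoff of the framework.
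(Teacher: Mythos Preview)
Your proposal is correct and follows essentially the same route as the paper: telescope the stage-wise estimate from Lemma~\ref{lem:energy_decrease}, then apply the polarization identity to the off-diagonal bilinear terms so that the double sum becomes a sum of quadratic forms whose operator coefficients are exactly the entries of Table~\ref{tab:escerk}, and conclude via the commuting positive semi-definiteness of $-(\tau\lap_N)^{-1}$ and those entries. The only cosmetic issue is that writing $\Delta_{i,j}\langle \bm y_i,\tilde B\bm y_j\rangle$ treats the operator $\Delta_{i,j}$ as a scalar---it should sit inside the inner product as $\langle \bm y_i,\tilde B\,\Delta_{i,j}\bm y_j\rangle$---but your remark on simultaneous Fourier diagonalization shows you have the right picture, and the paper's longer chain of equalities is precisely this same rearrangement written out in full.
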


\begin{proof}
By decomposing the energy difference into a combination of quadratic forms, we obtain
\begin{align*}
   & E(\bm u_{n,s}) - E(\bm u_{n,0}) = \sum_{i = 1}^s E(\bm u_{n, i}) - E(\bm u_{n, i-1}) \\
   \leq & \sum_{i = 1}^s \langle \bm u_{n, i} - \bm u_{n, i-1}, (\tau\lap_N)^{-1} \sum_{j = 1}^i \Delta_{i,j}(\bm u_{n, j} - \bm u_{n, j-1}) \rangle \\
  = & \sum_{i = 1}^s \langle \bm u_{n, i} - \bm u_{n, i-1}, (\tau\lap_N)^{-1} (\Delta_{i,i} - \frac{1}{2} \sum_{j = 1}^{i-1} \Delta_{i,j} - \frac{1}{2} \sum_{j = i+1}^{s} \Delta_{j, i} )  (\bm u_{n, i} - \bm u_{n, i-1})\rangle \\
    & + \sum_{i = 1}^s \langle \bm u_{n, i} - \bm u_{n, i-1}, \frac{1}{2} (\tau\lap_N)^{-1} \sum_{j = 1}^{i-1} \Delta_{i,j}  (\bm u_{n, i} - \bm u_{n, i-1})\rangle\\
    & + \sum_{i = 1}^s \langle \bm u_{n, i} - \bm u_{n, i-1}, \frac{1}{2}(\tau\lap_N)^{-1} \sum_{j = i+1}^{s} \Delta_{j, i}   (\bm u_{n, i} - \bm u_{n, i-1})\rangle \\
    & + \sum_{i = 1}^s \langle \bm u_{n, i} - \bm u_{n, i-1}, (\tau\lap_N)^{-1} \sum_{j = 1}^{i-1} \Delta_{i,j}(\bm u_{n, j} - \bm u_{n, j-1}) \rangle \\
    = & \sum_{i = 1}^s \langle \bm u_{n, i} - \bm u_{n, i-1}, (\tau\lap_N)^{-1} (\Delta_{i,i} - \frac{1}{2} \sum_{j = 1}^{i-1} \Delta_{i,j} - \frac{1}{2} \sum_{j = i+1}^{s} \Delta_{j, i} )  (\bm u_{n, i} - \bm u_{n, i-1})\rangle \\
    & + \sum_{i = 1}^s\sum_{j = 1}^{i-1} \langle \bm u_{n, i} - \bm u_{n, i-1}, \frac{1}{2}(\tau\lap_N)^{-1}  \Delta_{i,j}  (\bm u_{n, i} - \bm u_{n, i-1} + \bm u_{n,j} - \bm u_{n,j-1})\rangle \\
    & + \sum_{i = 1}^s \sum_{j = 1}^{i-1} \langle u_{n, i} - \bm u_{n, i-1} + \bm u_{n, j} - \bm u_{n, j-1}, \frac{1}{2}(\tau\lap_N)^{-1} \Delta_{i,j}   (\bm u_{n, j} - \bm u_{n, j-1})\rangle\\
     = & \sum_{i = 1}^s \langle \bm u_{n, i} - \bm u_{n, i-1}, (\tau\lap_N)^{-1} (\Delta_{i,i} - \frac{1}{2} \sum_{j = 1}^{i-1} \Delta_{i,j} - \frac{1}{2} \sum_{j = i+1}^{s} \Delta_{j, i} )  (\bm u_{n, i} - \bm u_{n, i-1})\rangle \\
   = & \sum_{i = 1}^s \langle \bm u_{n, i} - \bm u_{n, i-1}, (\tau\lap_N)^{-1} (\Delta_{i,i} - \frac{1}{2} \sum_{j = 1}^{i-1} \Delta_{i,j} - \frac{1}{2} \sum_{j = i+1}^{s} \Delta_{j, i} )  (\bm u_{n, i} - \bm u_{n, i-1})\rangle \\
  &  + \sum_{i = 1}^s \sum_{j = 1}^{i-1} \langle \bm u_{n, i} - \bm u_{n, i-1} + \bm u_{n, j} - \bm u_{n, j-1}, \frac{1}{2} (\tau\lap_N)^{-1} \Delta_{i, j} ( \bm u_{n, i} - \bm u_{n, i-1} + \bm u_{n, j} - \bm u_{n, j-1})\rangle.
\end{align*}
Since the matrices in Table \ref{tab:escerk} are assumed to be positive semi-definite, and considering that $\lap_N^{-1}$ is negative definite and commutes with $\Delta_{i,j}$, it is evident that $E(\bm u_{n, s}) - E(\bm u_{n, 0}) \leq 0$.
\end{proof}

The aforementioned procedures demonstrate the crucial role of preserving equilibria, which is achieved using Taylor-polynomial approximations to decompose the energy difference into a combination of quadratic forms. This process enables us to utilize the negativity of quadratic forms to guarantee unconditional energy stability. In the following analysis, we will examine the EFRK schemes on a case-by-case basis and demonstrate that the first- to third-order schemes based on \eqref{eqn:rk123} satisfy the energy stability conditions.

\begin{exm}[EFRK($1, 1$)] We reformulate $N_\kappa(\bm u_{n, 0})$ as
\begin{equation*}
N_\kappa(\bm u_{n, 0}) = (a_{1, 0} \phi_0\tau)^{-1} (\phi_1 \bm u_{n, 1} - \bm u_{n, 0}). \\
\end{equation*}
Then, we calculate
\begin{equation*}
  L \bm u_{n, 1} + N(\bm u_{n, 0}) = (a_{1, 0} \phi_0 \tau)^{-1}(\bm u_{n, 1} - \bm u_{n, 0}) = \tau^{-1}\Delta_{1, 1} (\bm u_{n, 1} - \bm u_{n, 0}).
\end{equation*}
where $\Delta_{1,1} = (a_{1,0}\phi_0(-\tau L_\kappa))^{-1} $. Note that $\phi_0(-\tau L_\kappa) = I$, and $a_{1, 0} = 1$ for the underlying RK(1, 1) parameters; as a result, we get $\Delta_{1, 1} = I$ for EFRK(1,1) scheme, which is positive definite. Therefore, it concludes that the EFRK(1, 1) scheme is unconditionally energy stable.
\end{exm}

\begin{exm}[EFRK($2, 2$)] We reformulate $N_\kappa(\bm u_{n, 1})$ as
\begin{equation*}
N_\kappa(\bm u_{n, 1})  = (a_{2, 1} \phi_1\tau)^{-1}\big( \phi_2 \bm u_{n, 2} - \bm u_{n, 0} - \tau a_{2, 0} \phi_0 N_\kappa(\bm u_{n, 0}) \big).
\end{equation*}
Then, we calculate
\begin{align*}
L_\kappa \bm u_{n, 2} + N_\kappa(\bm u_{n, 1}) & = L_\kappa \bm u_{n, 2} + ( a_{2, 1} \phi_1 \tau)^{-1}\big( \phi_2 \bm u_{n, 2} - \bm u_{n, 0} -  a_{2, 0} \phi_0 \tau N_\kappa(u^n) \big) \\
& = \big(L_\kappa + ( a_{2, 1} \phi_1 \tau)^{-1} \phi_2\big) \bm u_{n, 2} - ( a_{2,1} \phi_1 \tau)^{-1} \bm u_{n, 0} \\
& \quad  - ( a_{2,1} \phi_1 \tau)^{-1}  a_{2, 0} \phi_0 \tau ( a_{1, 0} \phi_0 \tau)^{-1} ( \phi_1 \bm u_{n, 1} - \bm u_{n, 0}) \\
& = (a_{2, 1} \phi_1 \tau)^{-1} (I -  a_{2, 0} \phi_0 \tau L_\kappa)  (\bm u_{n, 2} - \bm u_{n, 1}) + (a_{2, 1} a_{1, 0} \phi_1 \tau)^{-1} (a_{1, 0} - a_{2, 0})  (\bm u_{n, 1} - \bm u_{n, 0}) \\
& = \tau^{-1}\Delta_{2, 2}(\bm u_{n, 2} - \bm u_{n, 1}) + \tau^{-1}\Delta_{2, 1} (\bm u_{n, 1} - \bm u_{n, 0}),
\end{align*}
where $\Delta_{2,1} = (a_{2,1}a_{1,0} \phi_1)^{-1} (a_{1,0} - a_{2,0})$ and $\Delta_{2,2} = (a_{2, 1} \phi_1 )^{-1} (I -  a_{2, 0} \phi_0 \tau L_\kappa) $. The corresponding energy stability matrices are presented as follows:
\begin{align*}
  \Delta_{2,1} & = (a_{2,1}a_{1,0} \phi_1)^{-1} (a_{1,0} - a_{2,0}), \\
  \Delta_{1,1} - \frac{1}{2} \Delta_{2,1}  & =  (a_{1,0} u)^{-1} - (2a_{2,1} a_{1,0}\phi_1 )^{-1} (a_{1,0} - a_{2,0})\\
									   &= (2 a_{2,1} a_{1,0} \phi_1)^{-1}(2a_{2,1}\phi_1 - a_{1,0} + a_{2,0})\\
									   &= (2 a_{2,1} a_{1,0} \phi_1 )^{-1}(c_2 + a_{2,1} - a_{1,0} - 2  a_{2,1} a_{1,0}\tau L_\kappa), \\
  \Delta_{2,2} - \frac{1}{2} \Delta_{2,1} & =  (a_{2,1} \phi_1 )^{-1}(I - a_{2,0} \tau L_\kappa) - (2 a_{2,1}a_{1,0} \phi_1 \tau)^{-1} (a_{1,0} - a_{2,0})\\
									   &= (2 a_{2,1} a_{1,0}\phi_1 )^{-1}\left((a_{1,0} + a_{2,0})I  - 2a_{2,0} a_{1,0}\tau L_\kappa\right).
  \end{align*}

Substituting the RK(2,2) parameters \eqref{eqn:rk123} into energy stability matrices, we have 
\begin{align*}
  \Delta_{2,1} = (I - \tau L_\kappa)^{-1},\ \Delta_{1,1} - \frac{1}{2} \Delta_{2,1}  = (I - \tau L_\kappa)^{-1}(\frac{1}{2}I -\tau L_\kappa),\ \Delta_{2,2} - \frac{1}{2} \Delta_{2,1}  = (I - \tau L_\kappa)^{-1}( \frac{3}{2}I  - \tau L_\kappa).
  \end{align*}
Since that $-\tau L_\kappa$ is semi-positive definite, we can determine that the EFRK($2,2$) scheme is unconditionally energy stable.

\end{exm}

\begin{exm}[EFRK(3, 3)] We reformulate $N_\kappa(\bm u_{n,2})$ as
\begin{equation*}
N_\kappa(\bm u_{n, 2}) = (a_{3, 2} \phi_2 \tau)^{-1} \big(\phi_3 \bm u_{n, 3} - \bm u_{n, 0} - \tau a_{3, 0} \phi_0 N_\kappa(\bm u_{n, 0}) - \tau a_{3, 1} \phi_1 N_\kappa(\bm u_{n, 1})\big).
\end{equation*}
Then, we calculate
\begin{align*}
  L_\kappa \bm u_{n,3} + N_\kappa(\bm u_{n, 2}) & = L_\kappa \bm u_{n, 3} + (a_{3, 2} \phi_2 \tau)^{-1} \big(\phi_3 \bm u_{n, 3} - \bm u_{n, 0} - a_{3, 0} \phi_0 \tau N_\kappa(\bm u_{n, 0}) - a_{3, 1} \phi_1 \tau N_\kappa(\bm u_{n, 1})\big) \\
  & = \big(L_\kappa + (a_{3,2} \phi_2 \tau )^{-1} \phi_3\big) \bm u_{n, 3} - (a_{3, 2} \phi_2 \tau )^{-1} \bm u_{n, 0}  \\
  & \quad - ( a_{3,2}\phi_2 \tau)^{-1}  a_{3, 0} \phi_0  \tau (a_{1, 0} \phi_0 \tau)^{-1} (\phi_1 \bm u_{n, 1} - \bm u_{n, 0}) \\
  & \quad - (a_{3,2} \phi_2 \tau)^{-1} a_{3,1} \phi_1 \tau (a_{2, 1} \phi_1 \tau)^{-1}\big( \phi_2 \bm u_{n, 2} - \bm u_{n, 0} - a_{2, 0} \phi_0 \tau (a_{1, 0} \phi_0 \tau)^{-1} ( \phi_1 \bm u_{n, 1} - \bm u_{n, 0}) \big) \\
  & = ( a_{3, 2} \phi_2 \tau)^{-1} (I - a_{3, 0} \phi_0 \tau L_\kappa - a_{3,1} \phi_1 \tau L_\kappa) \bm u_{n,3} - (a_{3, 2} \phi_2 \tau)^{-1} \bm u_{n, 0}  \\
  & \quad - (a_{3,2} a_{1, 0} \phi_2 \tau)^{-1} a_{3, 0} \phi_1 \bm u_{n, 1} + (a_{3,2} a_{1,0} \phi_2 \tau)^{-1} a_{3,0}  \bm u_{n, 0} \\
  & \quad - (a_{3,2} a_{2,1} \phi_2 \tau)^{-1} a_{3,1} \phi_2  \bm u_{n,2} + (a_{3,2} a_{2,1} \phi_2 \tau)^{-1} a_{3,1}  \bm u_{n, 0} \\
  & \quad + (a_{3,2} a_{2,1} a_{1,0} \phi_2 \tau)^{-1} a_{3,1} a_{2,0}  \phi_1  \bm u_{n,1}
  -  (a_{3,2} a_{2,1} a_{1,0} \phi_2 \tau)^{-1} a_{3,1} a_{2,0}  \bm u_{n, 0} \\
  & = (a_{3, 2} \phi_2 \tau)^{-1} (I - a_{3, 0} \phi_0 \tau L_\kappa - a_{3,1} \phi_1 \tau L_\kappa) (\bm u_{n,3} - \bm u_{n, 2})  \\
  & \quad + (a_{3,2} a_{2,1} \phi_2 \tau)^{-1} \big((a_{2,1} - a_{3,1})I - (a_{3,0} a_{2,1} - a_{3,1}a_{2,0}) \phi_0 \tau L_\kappa\big) (\bm u_{n,2} - \bm u_{n,1}) \\
  & \quad + (a_{3,2} a_{2,1} a_{1, 0} \phi_2 \tau)^{-1} (a_{3,1} a_{2,0}  + a_{2,1} a_{1,0} - a_{3,1}a_{1,0}  - a_{3,0} a_{2,1})   (\bm u_{n,1} - \bm u_{n, 0}) \\
  &= \tau^{-1}\Delta_{3, 3} (\bm u_{n, 3} - \bm u_{n, 2}) + \tau^{-1}\Delta_{3, 2} (\bm u_{n, 2} - \bm u_{n, 1}) + \tau^{-1}\Delta_{3, 1}(\bm u_{n, 1} - \bm u_{n, 0}),
\end{align*}
where $\Delta_{3,1} = (a_{3,2} a_{2,1} a_{1, 0} \phi_2)^{-1} (a_{3,1} a_{2,0}  + a_{2,1} a_{1,0} - a_{3,1}a_{1,0}  - a_{3,0} a_{2,1})$, $\Delta_{3,2} = (a_{3,2} a_{2,1} \phi_2)^{-1} \big((a_{2,1} - a_{3,1})I - (a_{3,0} a_{2,1} - a_{3,1}a_{2,0}) \phi_0 \tau L_\kappa\big)$ and $\Delta_{3,3} = (a_{3, 2} \phi_2)^{-1} (I - a_{3, 0} \phi_0 \tau L_\kappa - a_{3,1} \phi_1 \tau L_\kappa)$. The corresponding energy stability matrices are presented as follows:
\begin{equation*}
\begin{aligned}
  \Delta_{2,1} & = (a_{2, 1} a_{1, 0} \phi_1 )^{-1} (a_{1, 0} - a_{2, 0}), \\
  \Delta_{3,1} & = (a_{3,2} a_{2,1} a_{1, 0} \phi_2 )^{-1} (a_{3,1} a_{2,0} + a_{2,1} a_{1,0} - a_{3,1}a_{1,0}  - a_{3,0} a_{2,1}), \\
  \Delta_{3,2} & = (a_{3,2} a_{2,1} \phi_2 )^{-1} \big((a_{2,1} - a_{3,1})I - (a_{3,0} a_{2,1} - a_{3,1}a_{2,0}) \tau L_\kappa\big), \\
  \end{aligned}
  \end{equation*}
  \begin{equation*}
  \begin{aligned}
  \Delta_{1,1} - \frac{1}{2}\Delta_{2,1} - \frac{1}{2} \Delta_{3,1} & = (2 a_{3,2} a_{2,1} a_{1,0} \phi_2 \phi_1 )^{-1} \big(2 a_{3, 2} a_{2, 1} \phi_2 \phi_1 -  (a_{1, 0} - a_{2, 0}) a_{3, 2} \phi_2 \\
  & \quad -  (a_{3,1} a_{2,0} + a_{2,1} a_{1,0} - a_{3,1}a_{1,0}  - a_{3,0} a_{2,1}) \phi_1 \big), \\
  \Delta_{2,2} - \frac{1}{2} \Delta_{2,1} - \frac{1}{2} \Delta_{3,2} & = (2 a_{3,2} a_{2,1} a_{1,0} \phi_2 \phi_1 )^{-1}\big(a_{3,2} \phi_2[ (a_{1,0} + a_{2,0})I - 2 a_{2,0} a_{1,0} \tau L_\kappa] \\
  & \quad - a_{1,0} \phi_1 [(a_{2,1} - a_{3,1})I - (a_{3,0} a_{2,1} - a_{3,1} a_{2,0}) \tau L_\kappa] \big), \\
  \Delta_{3,3} - \frac{1}{2} \Delta_{3,1} - \frac{1}{2} \Delta_{3,2} & = (2 a_{3,2} a_{2,1} a_{1,0} \phi_2 )^{-1}\big( 2 a_{2,1} a_{1,0}[I - a_{3,0} \tau L_\kappa - a_{3,1} \phi_1 \tau L_\kappa]  \\
  & \quad -  [a_{3,1} a_{2,0} - a_{3,1} a_{1,0} - a_{3,0} a_{2,1} + a_{2,1} a_{1,0}]I  \\
  & \quad -  a_{1,0}[ (a_{2,1} - a_{3,1})I - (a_{3,0} a_{2,1} - a_{3,1} a_{2,0}) \tau L_\kappa]\big).
\end{aligned}
\end{equation*}

Substituting the coefficients of the RK(3, 3) \eqref{eqn:rk123} into the above energy stability matrices, we get
\begin{align*}
  \Delta_{2,1} & = (\frac{2}{3}I - \frac{2}{9}\tau L_\kappa )^{-1},\ \Delta_{3,1} = (3I - 2\tau L_\kappa + \frac{2}{3}(\tau L_\kappa)^2)^{-1},\ \Delta_{3,2} = (3I - 2\tau L_\kappa + \frac{2}{3}(\tau L_\kappa)^2)^{-1}(4I - \tau L_\kappa), \\
  \Delta_{1,1} &- \frac{1}{2}\Delta_{2,1} - \frac{1}{2} \Delta_{3,1} =  (9I - 9\tau L_\kappa + 4(\tau L_\kappa)^2 - \frac{2}{3}(\tau L_\kappa)^3)^{-1}(\frac{75}{4}I - 11\tau L_\kappa + \frac{21}{2}(\tau L_\kappa)^2 - 2(\tau L_\kappa)^3), \\
  \Delta_{2,2} &- \frac{1}{2} \Delta_{2,1} - \frac{1}{2} \Delta_{3,2} = (9I - 9\tau L_\kappa + 4(\tau L_\kappa)^2 - \frac{2}{3}(\tau L_\kappa)^3)^{-1} (\frac{3}{4}I - \tau L_\kappa + (\tau L_\kappa)^2), \\
  \Delta_{3,3} &- \frac{1}{2} \Delta_{3,1} - \frac{1}{2} \Delta_{3,2} = (3I - 2\tau L_\kappa + \frac{2}{3}(\tau L_\kappa)^2)^{-1}(\frac{3}{2}I - \frac{1}{2}\tau L_\kappa).
\end{align*}
Given that $\tau L_\kappa$ is semi-negative definite, it follows that $-\tau L_\kappa$, $(\tau L_\kappa)^2$ and $-(\tau L_\kappa)^3$ are semi-positive definite. Thus, we can confirm the EFRK(3, 3) scheme is unconditionally energy stable.
\end{exm}

\begin{rem}
Recently, H. Liao et al. \cite{liao2024average} and Z. Fu et al. \cite{fu2024higher} independently proposed their work to establish the energy stability of the third-order exponential Runge--Kutta (ERK) schemes \cite{hochbruck2010exponential}. Although the scheme we focus on and the techniques we use are different, the goal is the same: all the work yields the following estimate for the energy difference:
\begin{equation*}
    E(u^{n+1}) -E(u^n) \leq \sum_{i=1}^{s}\sum_{j=1}^{i} \left\langle u_{n,i} - u_{n,i-1} ,(\tau\lap_N)^{-1} \Delta_{i,j} (u_{n,j} - u_{n,j-1}) \right\rangle.
\end{equation*}
However, we use different manners to tackle the right-hand side. Liao and Fu et al. verify the energy dissipation by examining the positive definiteness of operator matrix $\{\Delta_{i,j}\}_{1\leq j\leq i, 1 \leq i\leq s}$, which is succinct and straightforward. Conversely, by arranging the summation of the inner product on the right-hand side, we derive the energy stability matrices present in Table \ref{tab:escerk} and establish the energy stability by evaluating the positive definiteness of the energy stability matrices. From the results point of view, both strategies are capable of establishing the energy stability of the EFRK scheme. They provide a concise validation method, whereas we provide an alternative proof strategy that enriches the analytical method for energy stability.
\end{rem}

\begin{rem}
If we consider the CH equation with variable mobility, this introduces challenges for dealing with the stiff term in the EFRK framework. To the best of our knowledge, while many studies \cite{kim2007numerical,zhu1999coarsening,chen2022preconditioned} have developed effective numerical schemes for solving the CH equation with variable mobility, there is a scarcity of research that investigates the original energy stability from a theoretical standpoint. Within the EFRK framework, energy stability could be established for the first-order scheme by fixing the mobility function using the solution at the previous step. However, extending this stability to second and higher-order algorithms is highly challenging and may require techniques beyond the current scope. This is an important topic for future research to broaden the applicability and enrich the theoretical result of numerical methods in such scenarios.
\end{rem}

\begin{rem}
\label{remark:modif_potential}
    The assumption in Lemma \ref{lem:energy_decrease} implies that the numerical solution is uniformly bounded in the $\ell^\infty$ norm, which is essential in demonstrating that the stabilization EFRK scheme satisfies the energy dissipation law. An alternative approach to fulfill this assumption is to assume that the nonlinear term $f(\cdot)$ satisfies the global Lipschitz condition \cite{shen2010numerical,feng2013stabilized}. In the case of a cubic nonlinear function, these two assumptions are equivalent and necessary for the analysis of the energy dissipation and convergence of the proposed scheme. However, the Cahn--Hilliard equation does not inherently possess a maximum bound principle, and it has not been proven that the solution obtained by EFRK is bounded in maximum norm. Therefore, a common practice \cite{shen2010numerical} is to modify the potential by truncating it when $|u|>\beta$, where $\beta > 1$ is a given large number. In this case, the potential is replaced with a quadratic function, i.e.,
	\begin{equation*}
		\tilde{F}(u)= 
		\left\{
			\begin{aligned}
				&\frac{3\beta^2-1}{2}u^2 - 2\beta^3u + \frac{1}{4}(3\beta^4 +1), &&u>\beta\\
				&\frac{1}{4}(u^2-1)^2, &&|u|\leq \beta\\
				&\frac{3\beta^2-1}{2}u^2 + 2\beta^3u + \frac{1}{4}(3\beta^4 +1), &&u<-\beta\\
			\end{aligned}.
		\right.
	\end{equation*}
	Consequently, the the nonlinear term $f(u)$ is replaced by
	\begin{equation}
		\tilde{f}(u) = \tilde{F}'(u) = 
		\left\{
			\begin{aligned}
				&(3\beta^2-1)u - 2\beta^3, &&u>\beta\\
				&(u^2-1)u, &&|u|\leq \beta\\
				&(3\beta^2-1)u + 2\beta^3, &&u<-\beta\\
			\end{aligned}.
		\right.
	\end{equation}
As a result, after the reformulation, it holds that $|\tilde f'(u)| \leq (3\beta^2-1)$. On the other hand, since taking a large value of $\kappa$ will introduce a significant error \cite{xu2019stability,zhang2023efficient,lee2019effective}, choosing an appropriate value for $\beta$ is necessary. For most scientific computing problems involving the CH equation, the phase variable is always separated in the regions that almost reach the limit $\pm 1$ values due to the mechanism of double well potential \cite{li2022stability}. Thus, despite the lack of theoretical justification, in practice, the phase variable is always bounded in the maximum norm of order $\mathcal{O}(1)$ and is independent of the viscosity coefficient $\epsilon$. Based on this understanding and observation in numerical tests, we find that taking $\beta = \frac{\sqrt{15}}{3}$ does not affect the numerical simulation. Hence, the stabilization parameter can be effectively taken as $\kappa = 2$ in the subsequent numerical experiments.
\end{rem}

\subsection{Linear stability analysis}
We further investigate the linear stability of EFRK schemes. Following the approach of Cox and Matthews \cite{cox2002exponential}, we consider a 1D linear test equation:
\begin{equation*}
	 u_t = \Delta (-\epsilon^2 \Delta u + \lambda u) = (-\epsilon^2 \Delta^2 u + \theta \lambda \Delta u) + (1-\theta) \lambda \Delta u.
\end{equation*}
Here, we assume that $\text{Re}(\lambda)\geq0$. Let $\kappa = \theta \lambda$ be the stabilization parameter; the stabilization Fourier pseudo-spectral discretization has the form
\begin{equation}
	\label{eqn:sfsd}
	\bm{u}_t = L_\kappa \bm{u} + (1-\theta)\lambda\lap_N \bm{u}.
\end{equation}
Applying an $s$-stage, $p$th-order explicit EFRK scheme  yields
\begin{equation*}
	\bm u_{n,i} = \left(\phi_i(-c_i\tau L_\kappa)\right)^{-1} \Big( \bm u^n + \tau \sum_{j=0}^{i-1} a_{i,j} \phi_j(-c_j\tau L_\kappa) (1 -\theta) \lambda \lap_N \bm u\Big).
\end{equation*}

\begin{lem}
	The solution of an $s$-stage EFRK scheme to \eqref{eqn:sfsd} in the Fourier frequency space is
	\begin{equation*}
		\hat{u}_k^{n+1} = \frac{\phi_s(-\tau(1-\theta)\lambda |k|^2)}{\phi_s(\tau(\epsilon^2|k|^4+\theta\lambda |k|^2))}\hat{u}_k^n, \quad k = -N_1/2, \dots, N_1/2.
	\end{equation*}
\end{lem}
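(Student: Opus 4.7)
The plan is to diagonalize the scheme in Fourier space and reduce the vector recursion to a scalar one mode-by-mode, then identify the resulting scalar map with the ratio of truncated exponentials claimed.

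First, I would apply the discrete Fourier transform to the semi-discrete test equation \eqref{eqn:sfsd}. Since $\lap_N$, $L_\kappa$, and every matrix polynomial $\phi_i(-c_i\tau L_\kappa)$ are simultaneously diagonalized by the Fourier basis, the scheme decouples across wavenumbers. At mode $k$, let me write $\mu = \epsilon^2|k|^4+\theta\lambda|k|^2$ for the eigenvalue of $-L_\kappa$ and $\alpha = -(1-\theta)\lambda|k|^2$ for the eigenvalue of $(1-\theta)\lambda\lap_N$. Then the EFRK stage equations become the scalar recursion
\begin{equation*}
\hat u_{n,0} = \hat u^n, \qquad
\hat u_{n,i} = \frac{1}{\phi_i(c_i\tau\mu)}\Bigl(\hat u^n + \tau\alpha\sum_{j=0}^{i-1} a_{i,j}\,\phi_j(c_j\tau\mu)\,\hat u_{n,j}\Bigr),\quad i=1,\dots,s.
\end{equation*}

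Next, I would prove by induction on $i$ that $\hat u_{n,i} = \dfrac{A_i}{\phi_i(c_i\tau\mu)}\,\hat u^n$, where the scalars $A_i$ satisfy
\begin{equation*}
A_0 = 1, \qquad A_i = 1 + \tau\alpha\sum_{j=0}^{i-1} a_{i,j}\,A_j,\quad i=1,\dots,s.
\end{equation*}
The base case is immediate from $\phi_0\equiv 1$. The induction step is a one-line substitution: the factor $\phi_j(c_j\tau\mu)$ in the sum cancels the same factor in the denominator of $\hat u_{n,j}$, leaving the desired closed form. This is the routine part of the argument.

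The key observation, and I expect the main conceptual step, is that the recurrence defining $\{A_i\}$ is precisely the one obtained from applying the underlying explicit $s$-stage, $p$th-order Runge--Kutta method with Butcher coefficients $\{a_{i,j}\}$ to the scalar Dahlquist test problem $y' = \alpha y$. Hence $A_s$ is exactly the stability function $R(\tau\alpha)$ of the underlying RK scheme. For $s=p\leq 3$ (the regime considered in the paper, with the Butcher tableaux in \eqref{eqn:rk123}), the order conditions in Table \ref{tab:rootedtreeso} force
\begin{equation*}
R(z) = 1 + z + \tfrac{1}{2}z^2 + \cdots + \tfrac{1}{s!}z^s = \phi_s(z),
\end{equation*}
since every monomial in $R$ up through degree $s$ must agree with the Taylor expansion of $e^z$, and $R$ has degree at most $s$. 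The main technical care therefore lies in invoking this classical identification correctly for each admissible underlying RK($s,s$) tableau.

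Finally, setting $i=s$ gives $\hat u^{n+1} = \hat u_{n,s} = \dfrac{\phi_s(\tau\alpha)}{\phi_s(\tau\mu)}\,\hat u^n$, and substituting back $\alpha = -(1-\theta)\lambda|k|^2$ and $\mu = \epsilon^2|k|^4 + \theta\lambda|k|^2$ yields exactly the stated formula. I do not anticipate any genuine obstacle beyond the bookkeeping for the induction and the (standard) verification that $R = \phi_s$ when $s=p$.
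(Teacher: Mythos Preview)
Your proposal is correct and shares the same skeleton as the paper's proof---Fourier diagonalization followed by an induction on the stage index---but the inductive hypothesis and the mechanism for identifying the numerator differ in a way worth noting.

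The paper carries the stronger stage-by-stage claim $\hat u_{n,j,k} = \dfrac{\phi_j(-c_j\tau(1-\theta)\lambda k^2)}{\phi_j(c_j\tau(\epsilon^2 k^4+\theta\lambda k^2))}\hat u_{n,0,k}$ for every $j$, and closes the induction by directly invoking the equilibrium-preservation identity $\phi_i(c_i z)=1+z\sum_{j<i} a_{i,j}\phi_j(c_j z)$ (i.e., $\hat d_i\equiv I$), which is built into the EFRK construction. You instead track only the scalar amplitude $A_i$, recognize the recursion $A_i=1+\tau\alpha\sum a_{i,j}A_j$ as the explicit RK scheme applied to $y'=\alpha y$, and then appeal to the classical fact that an $s$-stage $s$th-order RK has stability polynomial $R(z)=\phi_s(z)$. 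Your route is mildly more general, since it gives the final amplification factor even for $s$-stage $s$th-order tableaux that fail the intermediate conditions $\hat d_i\equiv I$; the paper's route is more self-contained for the schemes actually under study and yields the intermediate-stage amplification factors as a by-product.
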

\begin{proof}
	Let $\hat{u}_{n,i,k}$ be the $k$th Fourier mode of $\bm u_{n,i}$. Then, the first-stage solution in the Fourier frequency space satisfies
	\begin{equation}
		\begin{aligned}
			\phi_1(c_1\tau(\epsilon^2k^4+\theta\lambda k^2))\hat{u}_{n,1,k} &= \hat{u}_{n,0,k} - \tau a_{1, 0}  (1-\theta)\lambda k^2 \hat{u}_{n,0,k}\\
										                                    &= \phi_1(-c_1\tau(1-\theta)\lambda k^2)\hat{u}_{n,0,k}.
		\end{aligned}
	\end{equation}
	Assuming that $\hat{u}_{n,j,k} = \frac{\phi_j(-c_j\tau(1-\theta)\lambda k^2)}{\phi_j(c_j\tau(\epsilon^2k^4+\theta\lambda k^2))}\hat{u}_{n,0,k}$, $j = 0,\dots,i-1$, we obtain
	\begin{equation*}
		\begin{aligned}
			\hat{u}^n_{i,k} &= \frac{1}{\phi_i(c_i\tau(\epsilon^2k^4+\theta\lambda k^2))} \big(\hat{u}_{n,0,k} - \tau  \sum_{j=0}^{i-1}a_{i,j} \phi_j(c_j\tau(\epsilon^2k^4+\theta\lambda k^2)) (1-\theta)\lambda k^2  \hat{u}_{n,j,k}\big)\\
						 	&= \frac{1}{\phi_i(c_i\tau(\epsilon^2k^4+\theta\lambda k^2))}\big( 1- \tau \sum_{j=0}^{i-1} a_{i,j} \phi_j(c_j\tau(\epsilon^2k^4+\theta\lambda k^2)) (1-\theta)\lambda k^2  \frac{\phi_j(-c_j\tau(1-\theta)\lambda k^2)}{\phi_j(c_j\tau(\epsilon^2k^4+\theta\lambda k^2))}\big) \hat{u}_{n,0,k}\\
							&= \frac{\phi_i(-c_i\tau(1-\theta)\lambda k^2)}{\phi_i(c_i\tau(\epsilon^2k^4+\theta\lambda k^2))}\hat{u}_{n,0,k}, \quad i \leq s.
		\end{aligned}
	\end{equation*}
	Finally, we get
	\begin{equation*}
		\hat{u}^{n+1}_{k}	= \frac{\phi_s(-\tau(1-\theta)\lambda k^2)}{\phi_s(\tau(\epsilon^2k^4+\theta\lambda k^2))}\hat{u}^n_{k}.
	\end{equation*}
 This completes the proof.
\end{proof}
Because the approximations $\phi_s(\cdot)$ have non-negative coefficients and $\text{Re}(\lambda) \geq 0 $, we have
\begin{equation*}
\frac{\phi_s(-\tau(1-\theta)\lambda k^2)}{\phi_s(\tau(\epsilon^2k^4+\theta\lambda k^2))}\leq  \frac{\phi_s(-\tau(1-\theta)\lambda k^2)}{\phi_s(\tau\theta\lambda k^2)}.
\end{equation*}
Denote $z = \tau\lambda k^2$ and recall the definition of the polynomial $\phi_s(\cdot)$,
we define the stability function as
\begin{equation*}
	\Phi(\theta,z) = \frac{\phi_s(-(1-\theta) z)}{\phi_s(\theta z)} = \frac{\sum_{k=0}^{s}\frac{( -(1-\theta) z)^k}{k!}}{\sum_{k=0}^{s}\frac{(\theta z)^k}{k!}},\quad p = s\leq 3.
\end{equation*}

Recalling the definition of stability for a method, $A$-stability is achieved when $\Phi(\theta,z)$ satisfies 
\begin{equation*}
	|\Phi(\theta,z)|\leq 1\quad \text{for all}\quad z\in \mathbb{C}^-,
\end{equation*}
where $\mathbb{C}^-$ denote the left-half complex plane. Through careful calculation, we conclude that the EFRK schemes are A-stable when $\theta = \frac{1}{2}$. Specifically, the stability regions of the EFRK(1,1) and EFRK(2,2) schemes contain the left-half complex plane when $\theta \geq \frac{1}{2}$. For the EFRK(3,3) scheme, only at $\theta = \frac{1}{2}$ is A-stable and the stability regions will exclude part of the left-half complex plane when $\theta > \frac{1}{2}$. We depict the stability boundary curves of EFRK schemes in Fig. \ref*{fig:linrstab}. Clearly, the stability region becomes large when $\theta$ increases to $\frac{1}{2}$. When $\theta = 0$, the stability region of EFRK reduces to that of the classic RK scheme. Moreover, when $\theta = \frac{1}{2}$, the stability region contains the entire left-half complex plane, see Figs. \ref*{fig:linrstab} (a)--(c).
\begin{figure}[ht]
	\centering
	\includegraphics[width=0.95\linewidth]{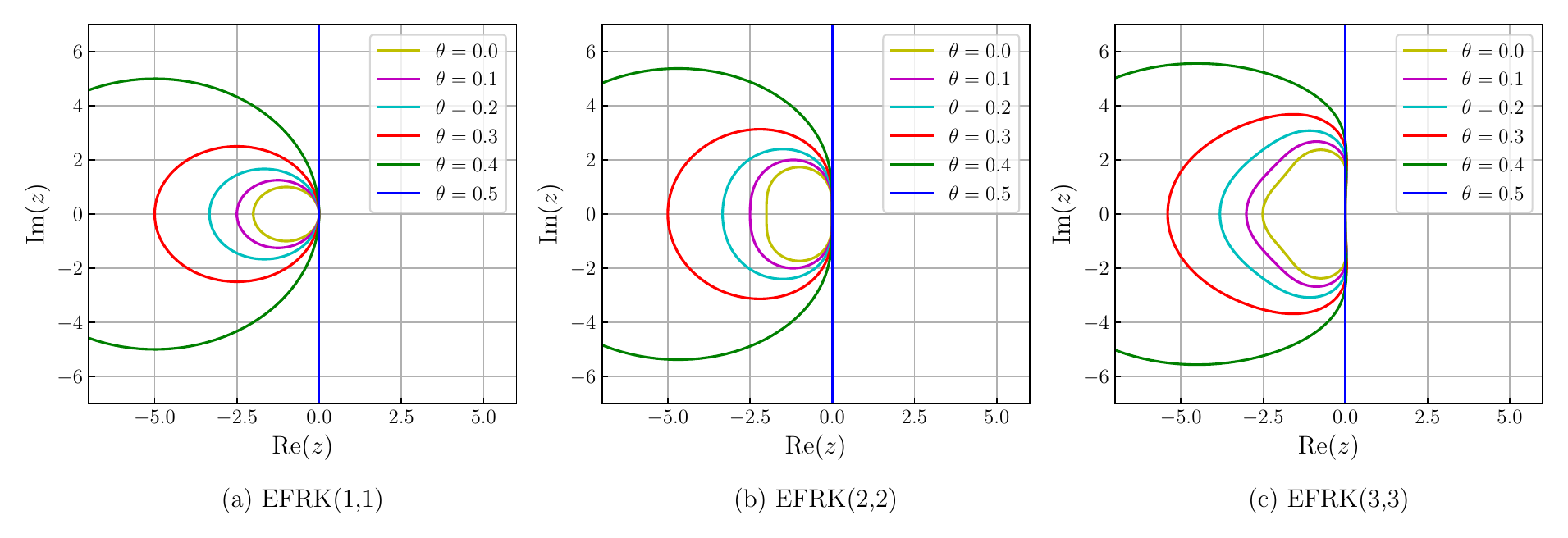}
	\caption{Boundaries of the stability regions for EFRK(s,p) with different values of $\theta$.}
	\label{fig:linrstab}
\end{figure}

\section{Error estimate}
\label{sec:5}

We now proceed to examine the error estimates for the solution derived from the fully discrete scheme in this section. To start, we introduce the following lemma, which is essential for the error analysis.

\begin{lem}[p. 36 of \cite{shen2011spectral}]
\label{lem:estimFPS}
 For any $v \in H^{m}_{per}(\Omega)$ with $m > 1/2$, the following estimate holds for all $0\leq l\leq m$:
 \begin{equation*}
  || \partial_x^l (I_N v - v)||_{L^2} < C_* h^{m-l}.
 \end{equation*}
\end{lem}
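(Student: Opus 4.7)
The plan is to expand everything in Fourier series and reduce both sides of the estimate to weighted $\ell^2$ sums over wavenumbers, so that the $H^m$ regularity of $v$ provides the decay needed to bound the error. For simplicity I would first treat the one-dimensional case on $(a_1,b_1)$, since the multidimensional case follows by tensor-product arguments. Write $v(x) = \sum_{k\in\mathbb{Z}} \hat v_k\, e^{\mathrm{i}\mu_1 k(x-a_1)}$, so that $H^m_{per}$ membership is equivalent to $\sum_{k\in\mathbb{Z}} (1+|k|^2)^m |\hat v_k|^2 <\infty$, while the interpolant $I_N v$ has the representation $I_N v(x) = \sum_{|k|\le N_1/2} \tilde v_k e^{\mathrm{i}\mu_1 k(x-a_1)}$, where the pseudospectral coefficients $\tilde v_k$ differ from $\hat v_k$ only through the aliasing contribution $\tilde v_k = \hat v_k + \sum_{r\neq 0}\hat v_{k+rN_1}$.

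Next I would decompose $I_Nv-v$ into a pure truncation piece and an aliasing piece:
\begin{equation*}
I_Nv-v = \Bigl(\sum_{|k|\le N_1/2} \!\!\sum_{r\neq 0} \hat v_{k+rN_1} e^{\mathrm{i}\mu_1 k(x-a_1)}\Bigr) - \sum_{|k|>N_1/2}\hat v_k e^{\mathrm{i}\mu_1 k(x-a_1)}.
\end{equation*}
Applying $\partial_x^l$ pulls down factors $(\mathrm{i}\mu_1 k)^l$, and Parseval's identity converts the $L^2$ norm into weighted sums of squared Fourier coefficients. The truncation part contributes $\sum_{|k|>N_1/2} |\mu_1 k|^{2l}|\hat v_k|^2$, which is bounded by $N_1^{2(l-m)}\sum_{|k|>N_1/2}|\mu_1 k|^{2m}|\hat v_k|^2 \le C N_1^{2(l-m)}\|v\|_{H^m}^2$ because on this tail $|k|^{2l}\le |k|^{2m}(N_1/2)^{2(l-m)}$.

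The aliasing part is the main nuisance: for each $|k|\le N_1/2$ and $r\neq 0$ one has $|k|\le|k+rN_1|$, so $|k|^{2l}|\hat v_{k+rN_1}|^2 \le |k+rN_1|^{2l-2m}|k+rN_1|^{2m}|\hat v_{k+rN_1}|^2 \le (N_1/2)^{2(l-m)}|k+rN_1|^{2m}|\hat v_{k+rN_1}|^2$. Using a Cauchy--Schwarz estimate over $r$ (which is where the hypothesis $m>1/2$ is used, to sum $|k+rN_1|^{-2m}$ uniformly in $k$) and then summing over $k$ yields an identical $N_1^{2(l-m)}\|v\|_{H^m}^2$ bound. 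Combining both estimates and recalling $h_1=(b_1-a_1)/N_1$ gives the claimed $h^{m-l}$ rate.

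The main obstacle is the aliasing term: controlling the doubly indexed sum requires the $m>1/2$ assumption to guarantee absolute convergence of the tail series through Cauchy--Schwarz. Once this is handled in 1D, the $d$-dimensional estimate follows by applying the 1D bound successively in each coordinate and invoking the tensor-product structure of $I_N$ and $\partial_x^l$; since this lemma is quoted verbatim from \cite{shen2011spectral}, I would ultimately simply cite that reference rather than reproduce the full argument.
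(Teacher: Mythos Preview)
Your sketch is correct and follows the standard Fourier-series argument (truncation plus aliasing, with Cauchy--Schwarz over the aliased modes using $m>1/2$) that appears in the cited reference. The paper itself does not prove this lemma at all: it is stated with a citation to \cite{shen2011spectral} and used as a black box, exactly as you anticipate in your final sentence.
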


First, we estimate the error between the exact solution and the solution of the spatial semi-discrete problem, i.e.,
\begin{equation}\label{eqn:semiLNapdx}
  \frac{\mathrm{d} \bm u}{\mathrm{d} t} = \lap_N (-\epsilon^2 \lap_N \bm u + f(\bm u)).
\end{equation}
Denote $I_h$ as the operator limiting a continuous function on the mesh $\Omega_N$; then, we have the following theorem.

\begin{thm}
    \label{thm:semi_estimate}
  Assume that $u \in H^1(0,T;H^{m+d+4}_{per}(\Omega))$. For any fixed $t\in (0,T]$, we have
  \begin{equation*}
    || I_h u(t) - \bm u(t)||_{\ell^2} \leq C_0 h^{m},
  \end{equation*}
  where $C_0 > 0$ is a constant independent of $h$.
\end{thm}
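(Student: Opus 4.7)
My plan is to derive and close a Gronwall-type energy estimate for the error $\bm e(t) := I_h u(t) - \bm u(t)$, where the spatial accuracy comes entirely from the Fourier consistency bound of Lemma \ref{lem:estimFPS}. The approach follows a standard spectral-collocation analysis adapted to the fourth-order CH setting.

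First, I would apply $I_h$ to the exact PDE, subtract the semi-discrete system \eqref{eqn:semiLNapdx}, and rearrange to obtain the error equation
\begin{equation*}
\bm e_t = -\epsilon^2 \lap_N^2 \bm e + \lap_N\bigl(f(I_h u) - f(\bm u)\bigr) + \bm R(t),
\end{equation*}
with the spatial consistency residual
\begin{equation*}
\bm R(t) = -\epsilon^2\bigl(I_h \lap^2 u - \lap_N^2 I_h u\bigr) + \bigl(I_h \lap f(u) - \lap_N I_h f(u)\bigr).
\end{equation*}
Exploiting the identity $\lap_N I_h v = I_h \lap I_{N} v$ valid for sufficiently smooth periodic $v$, each contribution to $\bm R$ reduces to an expression of the form $I_h \lap^k(v - I_{N} v)$ for $k\in\{1,2\}$, which by Lemma \ref{lem:estimFPS} and the regularity assumption $u \in H^1(0,T; H^{m+d+4}_{per})$ satisfies $\|\bm R(t)\|_{\ell^2} \leq C h^m$; the extra $d+4$ derivatives absorb the two Laplacians together with the Sobolev embedding needed to handle $f(u) = u^3 - u$ at this regularity level.

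Next, I would take the discrete $\ell^2$ inner product of the error equation with $\bm e$. The biharmonic term yields the dissipation $-\epsilon^2 \|\lap_N \bm e\|_{\ell^2}^2$, and after a discrete summation by parts the nonlinear contribution becomes $\langle f(I_h u) - f(\bm u), \lap_N \bm e\rangle$, which via Young's inequality is bounded by $\tfrac{\epsilon^2}{2}\|\lap_N \bm e\|_{\ell^2}^2 + \tfrac{1}{2\epsilon^2}\|f(I_h u) - f(\bm u)\|_{\ell^2}^2$. The residual term is handled similarly through $|\langle \bm R, \bm e\rangle| \leq \tfrac{1}{2}\|\bm e\|_{\ell^2}^2 + \tfrac{1}{2}\|\bm R\|_{\ell^2}^2$.

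The principal obstacle is that the cubic $f(u) = u^3 - u$ is only locally Lipschitz, so controlling $\|f(I_h u) - f(\bm u)\|_{\ell^2}$ by $C\|\bm e\|_{\ell^2}$ demands a uniform $\ell^\infty$ bound on $\bm u$. I would address this either by adopting the truncated potential $\tilde f$ of Remark \ref{remark:modif_potential}, whose derivative is globally bounded by $3\beta^2 - 1$, or by a bootstrap argument exploiting the $\ell^\infty$-boundedness of $I_h u$ together with the inductive smallness of $\|\bm e\|_{\ell^\infty}$. Either route produces the differential inequality
\begin{equation*}
\tfrac{d}{dt}\|\bm e\|_{\ell^2}^2 \leq C\|\bm e\|_{\ell^2}^2 + C h^{2m},
\end{equation*}
from which Gronwall's lemma together with $\bm e(0) = 0$ delivers $\|\bm e(t)\|_{\ell^2} \leq C_0 h^m$ uniformly on $[0,T]$.
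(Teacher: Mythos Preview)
Your proposal is correct and follows essentially the same route as the paper: derive the error equation with a consistency residual bounded by $Ch^m$ via Lemma~\ref{lem:estimFPS}, take the $\ell^2$ inner product with the error, absorb the nonlinear term against the biharmonic dissipation by Young's inequality, invoke the truncated potential of Remark~\ref{remark:modif_potential} for the global Lipschitz constant $3\beta^2-1$, and close with Gronwall. The paper commits directly to the truncated-potential option rather than the bootstrap alternative you mention, but otherwise the arguments coincide.
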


\begin{proof}
  The exact solution limited on mesh $\Omega_h$ can be considered satisfying \eqref{eqn:semiLNapdx} with a defect $\delta(t)$
  \begin{equation}
    \label{eqn:exactRes}
    \frac{\mathrm{d} I_h u}{\mathrm{d} t} = - \epsilon^2 \lap_N^{2} I_hu + \lap_N f(I_hu) + \delta(t),
  \end{equation}
  where 
  \begin{equation*}
    \begin{aligned}
    \delta(t) &= \epsilon^2\lap_N^{2} I_h u - \epsilon^2 I_h \Delta^2 u + I_h \Delta f(u) - \lap_N f(I_hu).\\
    \end{aligned}
  \end{equation*}
 Under the assumption of the theorem, using Lemma \ref{lem:estimFPS} and Sobolev embedding theorem (Theorem 5.9 of \cite{adams2003sobolev}), we can derive
  \begin{equation}
    \max_{t\in (0,T]}||\delta(t)||_{\ell^2} \leq C_*h^{m}.
  \end{equation}
  Let $\eta(t) = I_hu(t) -  \bm u(t)$, $\forall t\in[0,T]$; then, subtracting \eqref{eqn:semiLNapdx} from \eqref{eqn:exactRes} yields
  \begin{equation}
    \frac{\mathrm{d}\eta}{\mathrm{d}t} = -\epsilon^2 \lap_N^2 \eta + \lap_N (f(I_hu) - f(\bm u)) + \delta(t).
  \end{equation}
      Taking the discrete $\ell^2$ inner product with $2\eta$ and using the modification mentioned in Remark \ref{remark:modif_potential} yields
  \begin{align*}
    \frac{\mathrm{d}}{\mathrm{d}t}||\eta||_{\ell^2}^2 &= -2\epsilon^2||\lap_N \eta||_{\ell^2}^2 + 2\langle \lap_N f(I_hu) - f(\bm u),\eta\rangle + 2\langle \delta, \eta \rangle\\
                        &\leq -2\epsilon^2||\lap_N \eta||_{\ell^2}^2 + 2||f(I_hu) - f(\bm u )||_{\ell^2} ||\lap_N \eta||_{\ell^2} + 2||\delta||_{\ell^2} ||\eta||_{\ell^2}\\
                        &\leq -2\epsilon^2||\lap_N \eta||_{\ell^2}^2 + 2(3\beta^2-1)||\eta||_{\ell^2} ||\lap_N\eta||_{\ell^2} + 2||\delta||_{\ell^2} ||\eta||_{\ell^2}\\
                        &\leq -2\epsilon^2||\lap_N\eta||_{\ell^2}^2 + \frac{(3\beta^2-1)^2}{2\epsilon^2}||\eta||^2_{\ell^2} + 2\epsilon^2 ||\lap_N\eta||^2_{\ell^2} + 2||\delta||_{\ell^2} ||\eta||_{\ell^2}\\
                        &\leq \frac{(3\beta^2-1)^2}{2\epsilon^2}||\eta||_{\ell^2}^2 + ||\delta||_{\ell^2}^2 +  ||\eta||_{\ell^2}^2\\
                        &= \left(\frac{(3\beta^2-1)^2}{2\epsilon^2} + 1\right)||\eta||_{\ell^2}^2 + ||\delta||_{\ell^2}^2.
  \end{align*}
  Letting $C_1 = \frac{(3\beta^2-1)^2}{2\epsilon^2} + 1$ and applying Gr\"onwall's inequality yield
  \begin{equation}
  ||\eta(t)||_{\ell^2}^2 \leq \int_0^t e^{C_1(t-s)}||\delta(s)||_{\ell^2}^2ds < e^{C_1T}\frac{C_*^2}{C_1}h^{2m},
  \end{equation}
  which completes the theorem with $C_0 = \sqrt{e^{C_1T}\frac{C_*^2}{C_1}}$.
\end{proof}

Next, we estimate the error between the semi-discrete solution $\bm u(t)$ obtained by \eqref{eqn:semiLNapdx} and the approximate solution $\bm u^n$ computed by the EFRK scheme.
\begin{thm}
    \label{thm:full_estimate}
    Assume that the exact solution $u \in H^p(0,T;H_{pre}^{m+d+4}(\Omega))$, $\bm u(t)$ is the solution of the semi-discrete solution, and $\bm u^n$ is the approximate solution calculate by the $p$th-order EFRK scheme; then, it holds that
    \begin{equation}
        || \bm u(t) - \bm u^n ||_{\ell^2} \leq C_2 \tau^p.
    \end{equation}
\end{thm}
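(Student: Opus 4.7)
The plan is to follow the classical consistency-plus-stability paradigm adapted to the EFRK framework. Fix $n$, let $\bm{u}(t_{n,i})$ denote the exact semi-discrete solution at $t_{n,i} = t_n + c_i\tau$, and substitute $\bm{u}(t_{n,i})$ into the EFRK scheme \eqref{eqn:efrk} to obtain stage-wise residuals $\bm{R}_{n,i}$. Defining stage errors $\bm{e}_{n,i} := \bm{u}(t_{n,i}) - \bm{u}_{n,i}$ and step error $\bm{e}^n := \bm{e}_{n,0}$, subtraction yields the error identity
\begin{equation*}
    \phi_i(-c_i\tau L_\kappa)\, \bm{e}_{n,i} = \bm{e}^n + \tau \sum_{j=0}^{i-1} a_{i,j} \phi_j(-c_j\tau L_\kappa) \bigl[N_\kappa(\bm{u}(t_{n,j})) - N_\kappa(\bm{u}_{n,j})\bigr] + \tau\, \bm{R}_{n,i}.
\end{equation*}
I would then prove the theorem by establishing consistency ($\bm{R}_{n,i} = O(\tau^p)$), a one-step stability estimate, and a concluding application of discrete Gr\"onwall's inequality.

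For consistency, I would Taylor-expand $\bm{u}(t_{n,i})$ around $t_n$ up to order $p$, using $\bm{u}_t = L_\kappa \bm{u} + N_\kappa(\bm{u})$ recursively to represent higher time derivatives in terms of $L_\kappa$ and $N_\kappa$, and separately expand both $\phi_i(-c_i\tau L_\kappa)$ and $\phi_j(-c_j\tau L_\kappa) N_\kappa(\bm{u}(t_{n,j}))$ in powers of $\tau$. Cancellation of terms below $O(\tau^{p+1})$ inside $\tau\bm{R}_{n,i}$ relies simultaneously on the rooted-tree order conditions of the underlying RK scheme from Table \ref{tab:rootedtreeso} and on the equilibrium-preserving identities $\hat{d}_i \equiv I$ derived in Section \ref{sec:3}, which account for the mismatch between the truncated Taylor polynomial $\phi_i$ and the true exponential $e^{-c_i\tau L_\kappa}$. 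The assumed regularity $u \in H^p(0,T; H^{m+d+4}_{per}(\Omega))$, together with the Sobolev embedding theorem, provides uniform control on the $L^\infty$ norms of the time derivatives of $u$ needed to bound the remainder in $\ell^2$.

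For stability, I would take $\ell^2$ norms of the error identity and exploit: (a) the uniform bound $\|\phi_i(-c_i\tau L_\kappa)^{-1}\|_{\ell^2\to\ell^2}\le 1$ by Fourier diagonalization, since $-L_\kappa$ has non-negative spectrum and each $\phi_i$ has non-negative Taylor coefficients (so $\phi_i\ge I$ as an operator); (b) the global Lipschitz bound $|\tilde{f}'(u)|\le 3\beta^2-1$ from the truncated potential in Remark \ref{remark:modif_potential}; and (c) the uniform $\ell^\infty$ boundedness of $\bm{u}_{n,j}$ produced by the energy-stability results of Section \ref{sec:4} together with the assumed regularity of the exact solution. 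Combined with Fourier-side bounds of the operators $\phi_j(-c_j\tau L_\kappa)\lap_N$, an induction on the stage index $i$ yields $\|\bm{e}_{n,i}\|_{\ell^2} \leq (1+C\tau)\|\bm{e}^n\|_{\ell^2} + C\tau^{p+1}$, and in particular $\|\bm{e}^{n+1}\|_{\ell^2} \leq (1+C\tau)\|\bm{e}^n\|_{\ell^2} + C\tau^{p+1}$. Discrete Gr\"onwall with $\bm{e}^0 = 0$ and $n\tau \leq T$ then delivers the desired bound $\|\bm{u}(t_n)-\bm{u}^n\|_{\ell^2} \leq C_2\,\tau^p$.

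The main obstacle is the consistency step: confirming that every term of order below $\tau^{p+1}$ in the residual cancels after combining the RK order conditions with the equilibrium-preserving identities. The cancellation is not automatic because each $\phi_i$ replaces the exponential only to a specific order, so mismatch terms of the form $\tau^{i+1}(c_i L_\kappa)^{i+1}/(i+1)!$ must be reabsorbed into the expansion of $\bm{u}(t_{n,i})$ via the identities $\hat{d}_i\equiv I$. A secondary difficulty is controlling powers of the stiff operator $\tau L_\kappa$ appearing in these remainders, which I would address on the Fourier side by bounding each frequency mode individually and transferring operator powers onto higher spatial derivatives of the exact solution via the assumed Sobolev regularity.
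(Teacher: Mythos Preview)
Your overall consistency-plus-stability plan is reasonable, but the stability step as written has a genuine gap. The difficulty is the unbounded $\lap_N$ hidden inside $N_\kappa(\cdot)=\lap_N(f(\cdot)-\kappa\cdot)$. In Fourier variables the symbol of $\tau\,\phi_i(-c_i\tau L_\kappa)^{-1}\phi_j(-c_j\tau L_\kappa)\lap_N$ is
\[
\frac{\tau\,\phi_j(c_j z)\,|k|^2}{\phi_i(c_i z)}
=\frac{\phi_j(c_j z)}{\phi_i(c_i z)}\cdot\frac{z}{\epsilon^2|k|^2+\kappa},
\qquad z=\tau(\epsilon^2|k|^4+\kappa|k|^2),
\]
and this is uniformly bounded in $k$ and $\tau$ by a constant of order $1/\kappa$, \emph{not} by something $O(\tau)$. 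Consequently your recursion becomes $\|\bm e_{n,i}\|\le (1+C)\,\|\bm e^n\|+\cdots$ with a fixed $C>0$, so after $n$ steps you pick up $(1+C)^{sn}$, which blows up as $\tau\to 0$; the discrete Gr\"onwall argument cannot close. Your claimed one-step bound $(1+C\tau)$ does not follow from the operator estimates you invoke.

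The paper supplies the missing idea: instead of applying $\phi_i^{-1}$ and taking norms, it multiplies the error identity by $(I-c_i\tau L_\kappa)$ and takes the $\ell^2$ inner product with $e_{n,i}$. This energy-type testing generates on the left the dissipative term $c_i\tau\epsilon^2\|\lap_N e_{n,i}\|^2$, which then absorbs the Laplacian coming from $N_\kappa$ via Young's inequality, $\langle \lap_N g_j, e_{n,i}\rangle\le \frac{1}{4c_i\epsilon^2}\|g_j\|^2 + c_i\epsilon^2\|\lap_N e_{n,i}\|^2$. Only after this absorption does the remaining nonlinear contribution carry a genuine factor of~$\tau$, yielding $\|e_{n,i}\|^2\le(1+C_4\tau)^i\|e^n\|^2$. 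A secondary difference is the consistency setup: the paper introduces a reference trajectory $U_{n,i}$ that runs the EFRK scheme from the exact value $\bm u(t_n)$ with a \emph{single} truncation residual $R^n$ inserted at the final stage, rather than inserting $\bm u(t_{n,i})$ stage by stage. Your stage-wise residuals $\bm R_{n,i}$ for $i<s$ are typically only low order (intermediate RK stages are not order~$p$), so claiming $\bm R_{n,i}=O(\tau^p)$ at every stage would also need repair; the reference-solution device avoids this entirely.
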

\begin{proof}

Let us introduce reference solution $U_{n,i}$ that satisfies
\begin{align}
    U_{n,0} &= \bm u(t_n), \notag\\
    U_{n,i} &= \left(\phi_i(-c_i\tau L_\kappa)\right)^{-1} (U_{n,0} + \tau \sum_{j=0}^{i-1}a_{i,j}\phi_j(-c_j\tau L_\kappa)N_\kappa(U_{n,j})),\quad i=1,\dots,s-1, \label{eqn:ref_efrk_i} \\
    U_{n,s} &= \left(\phi_s(-\tau L_\kappa)   \right)^{-1} (U_{n,0} + \tau \sum_{j=0}^{s-1}a_{i,j}\phi_j(-c_j\tau L_\kappa)N_\kappa(U_{n,j}) + \tau R^{n}), \label{eqn:ref_efrk_s}\\
    \bm u(t_{n+1}) &= U_{n,s}, \notag
\end{align}
where $R^n$ is the truncation error. Let us denote $e^n = \bm u(t_n) - \bm u^n$ and $e_{n,i} = U_{n,i} - \bm u_{n,i}$ for $i = 0,1, \cdots, s-1$.
For each $i = 0,1, \cdots, s-1$, the difference between \eqref{eqn:efrk} and \eqref{eqn:ref_efrk_i} is
\begin{equation}
\label{eqn:diff_efrk_i}
e_{n,i} = \left(\phi_{i}(-c_i\tau L_\kappa)\right)^{-1}\Big(e^{n} + \tau \sum_{j=0}^{i-1}a_{i,j}\phi_j(-c_j\tau L_\kappa)(N_\kappa(U_{n,j}) - N_\kappa(u_{n,j}))\Big),\quad i=1,\dots,s-1.
\end{equation}
Acting $1-c_i\tau L_\kappa$ on both sides of \eqref{eqn:diff_efrk_i} and taking the discrete $\ell^2$ inner product with $e_{n,i}$, we have 
\begin{equation}
	\begin{aligned}
	&||e_{n,i}||_{\ell^2}^2 + c_i\tau\epsilon^2||\lap_N e_{n,i}||_{\ell^2}^2 - c_i\tau\kappa\langle \lap_N e_{n,i},e_{n,i}\rangle\\
	&\leq \langle e^n,e_{n,i}\rangle + \tau C_3 \sum_{j=0}^{i-1}a_{i,j}\langle N_\kappa(U_{n,j}) - N_\kappa(u_{n,j}),e_{n,i}\rangle \\
	&\leq \frac{1}{2}||e^n||_{\ell^2}^2 + \frac{1}{2}||e_{n,i}||_{\ell^2}^2 +  \tau\sum_{j=0}^{i-1}a_{i,j}\frac{C_3^2(3\beta^2 - 1)^2}{4\epsilon^2}||e_{n,j}||_{\ell^2}^2 + c_i\tau\epsilon^2||\lap_N e_{n,i}||_{\ell^2}^2,
	\end{aligned}
\end{equation}
where $C_3 := \max\limits_{z\geq 0,i>j} \frac{(1 + c_i z)\phi_{j}(c_j z)}{\phi_{i}(c_i z)}$. Denoting $C_4 := \frac{C_3^2(3\beta^2-1)^2}{2\epsilon^2}$, we obtain
\begin{equation}
	||e_{n,i}||_{\ell^2}^2 \leq ||e^n||_{\ell^2}^2 + C_4\tau \sum_{j=0}^{i-1}a_{i,j}||e_{n,j}||_{\ell^2}^2.
\end{equation}
By induction, assuming that $||e_{n,j}||^2\leq (1 + C_4\tau)^j||e^n||^2$ for $j = 0,1,\dots,i-1$, we obtain
\begin{equation}
	||e_{n,i}||_{\ell^2}^2 \leq ||e^n||_{\ell^2}^2 + C_4\tau \sum_{j=0}^{i-1}(1 + C_4\tau)^j||e^n||_{\ell^2}^2 = (1 + C_4\tau)^i||e^n||_{\ell^2}^2.
\end{equation}
Therefore, the inequality holds for $i = 0,1,\cdots,s-1$.

Similarly, subtracting \eqref{eqn:efrk} from \eqref{eqn:ref_efrk_s} yields
\begin{equation}
	e^{n+1} = \left(\phi_{s}(-\tau L_\kappa)\right)^{-1}\Big( e^{n} + \tau \sum_{j=0}^{s-1}a_{s,j}\phi_j(-c_j\tau L_\kappa)(N_\kappa(U_{n,j})-N_\kappa(u_{n,j})) +\tau R^{n} \Big).
\end{equation}
Applying $I - \tau L_\kappa$ on both sides and taking the $\ell^2$ inner product with $e^{n+1}$ yield
\begin{equation}
	\begin{aligned}
	&\quad\ ||e^{n+1}||_{\ell^2}^2 + \tau\epsilon^2||\lap_N e^{n+1}||_{\ell^2}^2 - \tau\kappa\langle \lap_N e^{n+1},e^{n+1}\rangle\\
	&\leq \langle e^n,e^{n+1}\rangle + \tau C_3 \sum_{j=0}^{s-1}a_{s,j}\langle N_\kappa(U_{n,j}) - N_\kappa(u_{n,j}),e^{n+1}\rangle + \tau\langle R^n,e^{n+1}\rangle\\
	&\leq \frac{1}{2}||e^n||_{\ell^2}^2 + \frac{1}{2}||e^{n+1}||_{\ell^2}^2 + \frac{C_4}{2}\tau\sum_{j=0}^{s-1}a_{s,j}||e_{n,j}||_{\ell^2}^2 +  \tau\epsilon^2||\lap_N e^{n+1}||_{\ell^2}^2 + \frac{\tau C_3^2}{2}||R^n||_{\ell^2}^2 + \frac{\tau}{2}||e^{n+1}||_{\ell^2}^2.
	\end{aligned}
\end{equation}
When $\tau \leq \frac{1}{2}$, we have
\begin{equation}
	\begin{aligned}
	||e^{n+1}||_{\ell^2}^2 &\leq 2||e^n||_{\ell^2}^2 + 2C_4\tau\sum_{j=0}^{s-1}||e^{n,j}||_{\ell^2}^2 + 2\tau C_3^2 ||R^n||_{\ell^2}^2\\
						  &= 2(1 + C_4\tau)^s||e^n||_{\ell^2}^2 + 2\tau C_3^2 ||R^n||_{\ell^2}^2.
	\end{aligned}
\end{equation}
By recursion, we obtain
\begin{equation}
	\begin{aligned}
	||e^{n+1}||_{\ell^2}^2 &\leq 2(1 + C_4\tau)^s||e^n||_{\ell^2}^2 + 2\tau C_3^2 ||R^n||_{\ell^2}^2\\
				  &= 2^n(1 + C_4\tau)^{ns}||e^0||_{\ell^2}^2 + C_3^2\tau ||R^n||_{\ell^2}^2\sum_{k=0}^{n}2^{k+1}(1 + C_4\tau)^{ks}.
	\end{aligned}
\end{equation}

Since $e^0 = 0$, we deduce
\begin{equation}
    \label{eqn:err_form}
	||e^{n+1}||_{\ell^2} \leq \sqrt{2^{n+1}t_nC_3^2(1 + C_4\tau)^{ns}}||R^n||_{\ell^2} \leq \sqrt{2^{n+1}t_nC_3^2e^{C_4 s t_n}}||R^n||_{\ell^2}.
\end{equation}
According to \cite{zhang2023efficient}, for the EFRK scheme with the order of the RK parameters $p\leq 4$, the truncation error satisfies  
\begin{equation}
    \label{eqn:err_trunc}
    ||R^n||_{\ell^2} \leq C_5\tau^p,\quad 0\leq n\leq N-1.
\end{equation}
where $C_5$ is a constant independent of $\tau$. 

Inserting \eqref{eqn:err_trunc} into \eqref{eqn:err_form} and denoting $C_2 = C_5\sqrt{2^{n+1}t_nC_3^2e^{C_4 s t_n}}$, we have
\begin{equation}
    ||e^{n}||_{\ell^2} \leq C_2\tau^p,\quad 1\leq n\leq N,
\end{equation}
which completes the proof.
\end{proof}

By combining Theorems \ref{thm:semi_estimate} and \ref{thm:full_estimate}, we obtain the following result on the error estimate of the full discrete schemes.
\begin{thm}
Assuming that the exact solution $u \in H^p(0,T;H^{m+d+4}(\Omega))$ with $m\geq 2$ and $\bm u^n$ is the approximate solution calculated by the $p$th-order EFRK scheme, it holds that
\begin{equation}
    ||I_h u(t_n) - \bm u^n||_{\ell^2} \leq C(h^{m} + \tau^p),\quad n=1,\cdots, N,
\end{equation}
where $C$ is a constant independent of $h$ and $\tau$.
\end{thm}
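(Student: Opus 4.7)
The plan is to derive this combined estimate as an immediate consequence of the two preceding theorems via the triangle inequality. Since the statement bounds the discrepancy between the exact solution restricted to the mesh and the fully discrete approximation, I would split the error naturally into a spatial contribution (the gap between the restricted exact solution and the semi-discrete solution) and a temporal contribution (the gap between the semi-discrete solution and the fully discrete EFRK solution).

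First, I would note that the regularity assumption $u \in H^p(0,T; H^{m+d+4}(\Omega))$ is strong enough to feed both Theorem \ref{thm:semi_estimate} and Theorem \ref{thm:full_estimate} simultaneously: the spatial Sobolev index $m+d+4$ meets the hypothesis of Theorem \ref{thm:semi_estimate}, while the temporal regularity $H^p(0,T;\cdot)$ matches the requirement used to derive the truncation error bound \eqref{eqn:err_trunc} in Theorem \ref{thm:full_estimate}. This confirms that both constants $C_0$ and $C_2$ from those theorems are applicable at every time level $t_n \in (0,T]$.

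Then I would apply the triangle inequality
\begin{equation*}
    \| I_h u(t_n) - \bm u^n \|_{\ell^2} \leq \| I_h u(t_n) - \bm u(t_n) \|_{\ell^2} + \| \bm u(t_n) - \bm u^n \|_{\ell^2},
\end{equation*}
and invoke Theorem \ref{thm:semi_estimate} on the first summand to obtain a bound of $C_0 h^m$, followed by Theorem \ref{thm:full_estimate} on the second summand to obtain a bound of $C_2 \tau^p$. Setting $C := \max\{C_0, C_2\}$ (or, equivalently, $C_0 + C_2$) yields the desired estimate uniformly for $n = 1, \dots, N$, and since $C_0$ and $C_2$ depend only on $T$, $\epsilon$, $\beta$, the RK coefficients, and the Sobolev norms of $u$, the constant $C$ is indeed independent of $h$ and $\tau$.

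There is no real obstacle here: the work has already been done in the two preceding theorems, and this final statement is essentially a bookkeeping step. The only minor point worth mentioning in the write-up is that the spatial index $m$ in Theorem \ref{thm:semi_estimate} is the same $m$ that appears in the final estimate, so the assumption $m \geq 2$ guarantees convergence in both variables and is harmlessly compatible with the temporal assumption.
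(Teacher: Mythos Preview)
Your proposal is correct and matches the paper's approach exactly: the paper simply states that the result follows by combining Theorems \ref{thm:semi_estimate} and \ref{thm:full_estimate}, and your triangle-inequality decomposition is precisely the intended argument.
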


\section{Numerical experiments}
\label{sec:6}
In this section, we will conduct a series of numerical experiments to illustrate the effective, high-order accuracy, and unconditional energy stability of EFRK schemes in simulating the coarsening dynamics of the CH equation. Additionally, we present an adaptive time-stepping method that effectively utilizes the equilibrium-preserving property and unconditional energy stability. In the following experiments, the stabilization parameter is set to $\kappa = 2$ as a default unless otherwise specifically indicated. 

\subsection{Test on the preservation of equilibrium state}
\begin{figure}[ht]
    \centering
    \includegraphics[width=0.9\linewidth]{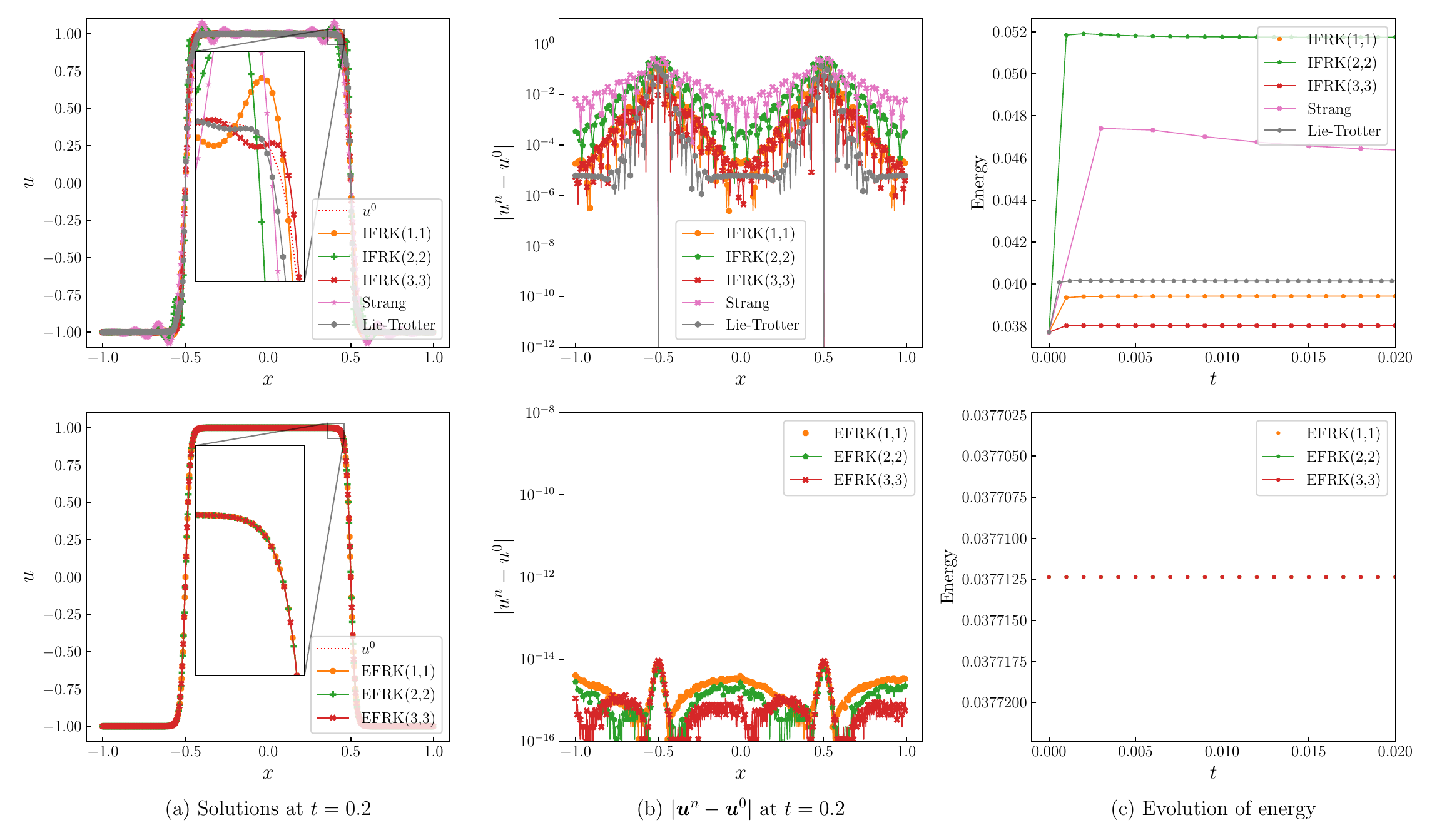}
    \caption{Example \ref*{exm:equilibrium}: the profiles of the solution $u$ (left column), absolute error (middle column), and energy (right column) computed using the IFRK, EFRK, and splitting schemes.}
    \label{fig:equilib}
\end{figure}

\begin{exm}
\label{exm:equilibrium}
To demonstrate the superiority of the proposed EFRK framework, we first examine the 1D Cahn--Hilliard equation with periodic boundary conditions on $\Omega=(-1,1)$. The initial condition is selected as
\begin{equation*}
    {u}_0(x) = \tanh(\frac{0.5-|x|}{\sqrt{2}\epsilon}), 
\end{equation*}
which corresponds to a local minimum of the free energy functional \eqref{eqn:energy}. This initial profile exhibits two equilibrium interfaces at $x = \pm 0.5$.

In addition to the IFRK and EFRK schemes, the popular first-order Lie--Trotter and second-order Strang splitting schemes \cite{li2021energy,li2022operator} are also considered in this example. The splitting method decomposes the vector field of the original equation into several components, allowing the resulting subsystems to be easily integrated. We approximate the solution $\bm{u}$ via the following splitting schemes
\begin{equation*}
    \text{Lie--Trotter:}\  \bm{u}^{n+1} = S_L^{\tau}S_N^{\tau}\bm{u}^n, \quad \text{Strang:}\ \bm{u}^{n+1} = S_L^{\frac{\tau}{2}}S_N^{\tau}S_L^{\frac{\tau}{2}}\bm{u}^n.
\end{equation*}

Here, the operator $S_{L}^{\tau}$ and $S_{N}^{\tau}$ are denoted as the solver of the following linear and nonlinear problem, respectively:
\begin{equation*}
\left\{
    \begin{aligned}
        &\bm{u}_t = L_{\kappa}\bm{u}, \quad 0 < t \leq \tau,\\
        &\bm{u}(0) = \bm{v},
    \end{aligned}
\right.
\qquad
\left\{
    \begin{aligned}
        &\bm{u}_t = N_\kappa(\bm{u}),\quad 0<t\leq \tau,\\
        &\bm{u}(0) = \bm{v}.
    \end{aligned}
\right.
\end{equation*}
It is worth noting the splitting scheme also fails to preserve the equilibrium state. To provide a concise explanation, we approximate the solution using the Lie--Trotter scheme, which can be represented as:
\begin{equation*}
    u^{n+1} = e^{\tau L_\kappa}\left(u^n + \int_{0}^{\tau} N_\kappa(u(t_n + s)) ds \right)
\end{equation*}
Since the integral on the right-hand side generally does not have the primitive function expression, we approximate it using the left rectangle rule and obtain the first-order scheme
\begin{equation}\label{eqn:lietrotter}
    u^{n+1} = e^{\tau L_\kappa}\left(u^n + \tau N_\kappa(u^n) \right)
\end{equation}
Then by substituting $u^n$ in \eqref{eqn:lietrotter} with the equilibrium state $\bm u^*$, we get
\begin{equation*}
    \bm u^{n+1} = e^{\tau L_\kappa}(\bm u^* + \tau N_\kappa(\bm u^*)) = e^{\tau L_\kappa}(I - \tau L_\kappa)\bm u^*.
\end{equation*}
Using the same analysis in subsection 3.1, it is clear that the splitting method fails to maintain steady state and suffers from exponential decay effects.
\end{exm}

For the sake of comparison, we adopt the following parameter settings: $\epsilon^2 = 0.0004$, $\tau = 0.0005$, $N = 2048$, $\kappa = 0$, and final time $T = 0.02$. To ensure computational stability, we specifically chose a time step of $\tau = 0.0001$ for the Lie--Trotter scheme. In addition, because the nonlinear solver $S_{N}^T$ in the splitting scheme cannot be displayed in an explicit expression, we resort to using the RK($1,1$) method for Lie--Trotter scheme and RK($2,2$) method for Strang scheme in computation. All the numerical results are presented in Fig. \ref{fig:equilib}.
	
The top row of Fig. \ref{fig:equilib} illustrates that both IFRK and operator splitting schemes fail to preserve the equilibrium state even in the absence of a stabilization term. While higher-order IFRK schemes improve accuracy, they continue to exhibit significant errors caused by exponential damping near the interfacial regions ($x \approx \pm 0.5$). As the equilibrium state changes, the solutions obtained by the IFRK and operator splitting schemes are no longer the local minimum of the original energy. This leads to the energy rise reflected in the profile, which contradicts the intrinsic energy-decreasing property of the CH equation. The outcome reveals the numerical schemes that do not preserve the steady state are not suitable for solving the Cahn–Hilliard equation and, more generally, any gradient flow equation.

In contrast, by utilizing the Taylor polynomial approximations to the exponential functions, all EFRK schemes shown in the bottom row of Fig. \ref{fig:equilib} effectively maintain the initial equilibrium state. Although relatively higher errors introduced by the spatial discretization are noticeable near the interface layers, they are more accurate than those of IFRK and splitting schemes. In Fig. \ref{fig:equilib}(c), it is evident that the energy profiles remain constant over time. Hence, to preserve the equilibrium state and ensure energy dissipation in gradient flow systems, it is crucial to preserve equilibrium states.

\subsection{Convergence tests}

In this subsection, we will verify the temporal and spatial convergence of the proposed schemes.

\begin{exm}
\label{exm:convtest}
Consider the 1D CH equation \eqref{eqn:chsys} with the initial condition
\begin{equation*}
	u_0(x) = 0.1 \big(\sin(3\pi x) + \sin(5\pi x)\big),
\end{equation*}
on the interval $\Omega = (-1,1)$, the terminal time is set to be $T = 0.1$. 
\end{exm}
\begin{table}[ht]
\begin{tabular}{|c|c|c|c|c|c|c|c|c|c|}
\hline
$\tau$          & Scheme                     & $\ell^2$ Error & Order         & Scheme                     & $\ell^2$ Error & Order & Scheme                     & $\ell^2$ Error & Order \\ \hline
$\delta/2^6$    & \multirow{6}{*}{EFRK(1,1)} & 4.0900e-03  &  -            & \multirow{6}{*}{EFRK(2,2)} & 8.5267e-05 &  -     & \multirow{6}{*}{EFRK(3,3)} & 1.2587e-06 &  -      \\
$\delta/2^7$    &                            & 2.0852e-03  &  0.97         &                            & 2.1814e-05  &  1.97  &                            & 1.6290e-07  &  2.95   \\
$\delta/2^8$    &                            & 1.0529e-03  &  0.99         &                            & 5.5181e-06  &  1.98  &                            & 2.0778e-08  &  2.97   \\
$\delta/2^9$    &                            & 5.2903e-04  &  0.99         &                            & 1.3878e-06  &  1.99  &                            & 2.6260e-09  &  2.98   \\
$\delta/2^{10}$   &                            & 2.6517e-04  &  1.00         &                            & 3.4801e-07  &  2.00  &                            & 3.2917e-10  &  3.00   \\
$\delta/2^{11}$   &                            & 1.3275e-04  &  1.00         &                            & 8.7134e-08  &  2.00  &                            & 3.9785e-11  &  3.05   \\ \hline
\end{tabular}
\caption{Errors and convergences rates of the EFRK schemes with $\epsilon^2=0.01$, $N_1 = 512$, and $\delta = 10^{-2}$.}
\label{tab:convRate01}
\vspace*{0.4cm}
\begin{tabular}{|c|c|c|c|c|c|c|c|c|c|}
\hline
$\tau$          & Scheme                     & $\ell^2$ Error & Order         & Scheme                     & $\ell^2$ Error & Order & Scheme                     & $\ell^2$ Error & Order \\ \hline
$\delta/2^6$    & \multirow{6}{*}{EFRK(1,1)} & 3.9485e-04  &  -            & \multirow{6}{*}{EFRK(2,2)} & 7.8463e-06 &  -     & \multirow{6}{*}{EFRK(3,3)} & 4.2049e-06 &  -      \\
$\delta/2^7$    &                            & 2.0906e-04  &  0.92         &                            & 3.4195e-06  &  1.20  &                            & 8.5690e-07  &  2.29   \\
$\delta/2^8$    &                            & 1.0804e-04  &  0.95         &                            & 1.1666e-06  &  1.55  &                            & 1.4953e-07  &  2.52   \\
$\delta/2^9$    &                            & 5.4982e-05  &  0.97         &                            & 3.4902e-07  &  1.74  &                            & 2.3264e-08  &  2.68   \\
$\delta/2^{10}$   &                            & 2.7741e-05  &  0.99         &                            & 9.6653e-08  &  1.85  &                            & 3.3204e-09  &  2.81   \\
$\delta/2^{11}$   &                            & 1.3935e-05  &  0.99         &                            & 2.5565e-08  &  1.92  &                            & 4.4054e-10  &  2.91   \\ \hline
\end{tabular}
\caption{Errors and convergences rates of the EFRK schemes with $\epsilon^2=0.0025$, $N_1 = 512$, and $\delta = 10^{-2}$.}
\label{tab:convRate0025}
\end{table}

We begin by validating the temporal convergence of the EFRK schemes. In this experiment, we fix the mesh number as $N = 512$ and set $\delta = 10^{-2}$. Given the difficulty in obtaining the explicit analytic solution of CH equation, we utilize the numerical solution obtained by the classical ERK(3,3) \cite[Scheme (5.8) with $c_2 = \frac{4}{9}$]{hochbruck2005explicit}, with $\tau = \delta / 2^{12}$, as the reference solution. We then carry out numerical simulations for the EFRK schemes with various time steps $\tau = \delta/2^k$ for $k = 6, 7, \dots, 11$, and compute the error with respect to $\tau$. Table \ref{tab:convRate01} and Table \ref{tab:convRate0025} list the error in $\ell^2$ norm and convergence rates with $\epsilon^2 = 0.01,0.0025$ for EFRK schemes, respectively. As expected, we clearly observe first- to third-order accuracy in time for the corresponding EFRK schemes.

Subsequently, we evaluate the spectral accuracy of the spectral collocation method by setting a fixed time step of $\tau = \delta/2^{12}$ and employing the EFRK(3,3) scheme for temporal discretization. The reference solution is computed with $N = 2^{11}$. To assess the numerical error at the final time, we uniformly refine the spatial grid from $N = 2^2$ to $N = 2^{10}$. Fig. \ref{fig:spatial_convg} illustrates the spectral accuracy of the Fourier pseudo-spectral discretization by showing the marked decrease in numerical error.
 
\begin{figure}[htbp]
	\centering
	\includegraphics[width=0.6\linewidth]{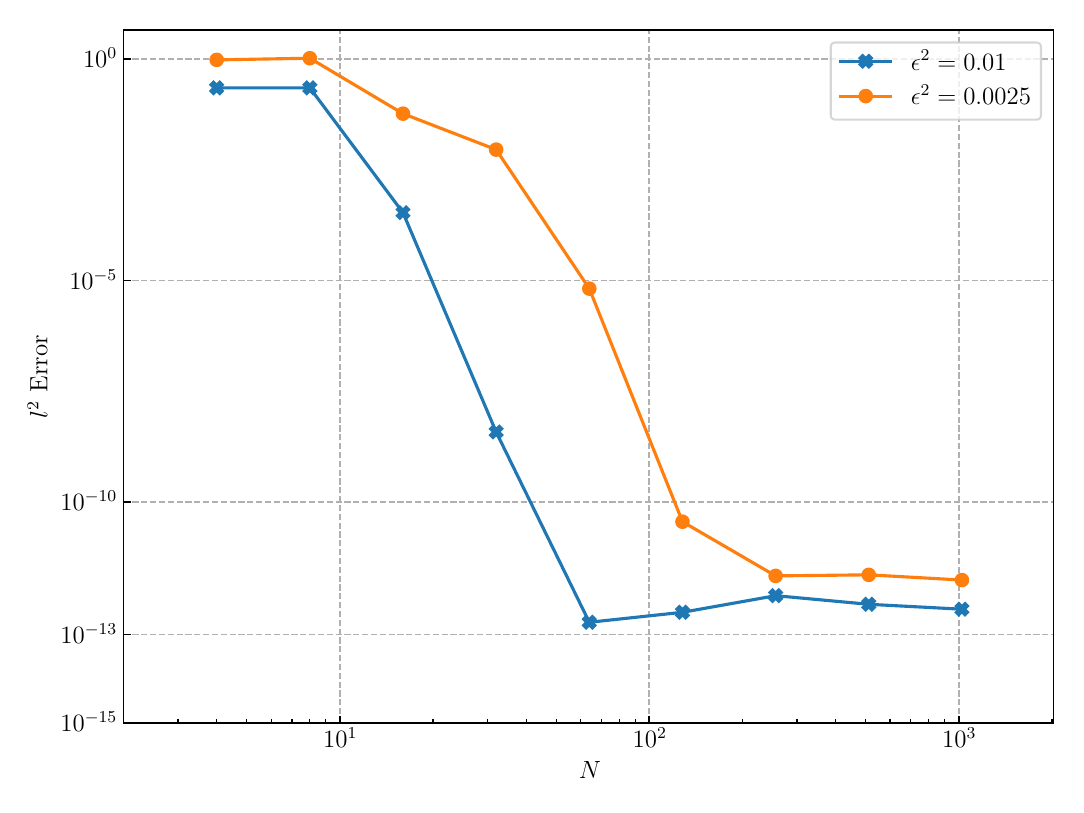}
	\caption{Example \ref*{exm:convtest}: spatial accuracy tests of the Fourier pseudo-spectral method.}
	\label{fig:spatial_convg}
\end{figure}

\begin{exm}
\label{exm:advantage}
As analyzed in \cite{xu2019stability,zhang2023efficient,lee2019effective}, while the stabilization method expands the stability region and allows for the use of large time steps, it also introduces additional errors in the numerical solution. Consequently, ensuring the accuracy of solutions requires the use of small step sizes during the computation. In this example, our objective is to verify the energy dissipation law and mass conservation of the proposed scheme as well as to demonstrate that higher-order schemes can mitigate the impact of the stabilization technique.

We consider the 2D CH equation \eqref{eqn:chsys} with diffusion coefficients $\epsilon^2 = 0.0025$ with terminal time $T = 100$. The computational domain is set to $\Omega = (-\pi,\pi)^2$, and the mesh point numbers are fixed as $N_1 = N_2 = 128$. The initial condition is given by a random value at each point that is uniformly distributed in $[-0.5,0.5]$. The reference solution is obtained using the $ERK(3,3)$ scheme with $\tau = 10^{-5}$ and $\kappa = 0$.

\end{exm}

\begin{figure}[htbp]
	\centering
	\includegraphics[width=0.9\linewidth]{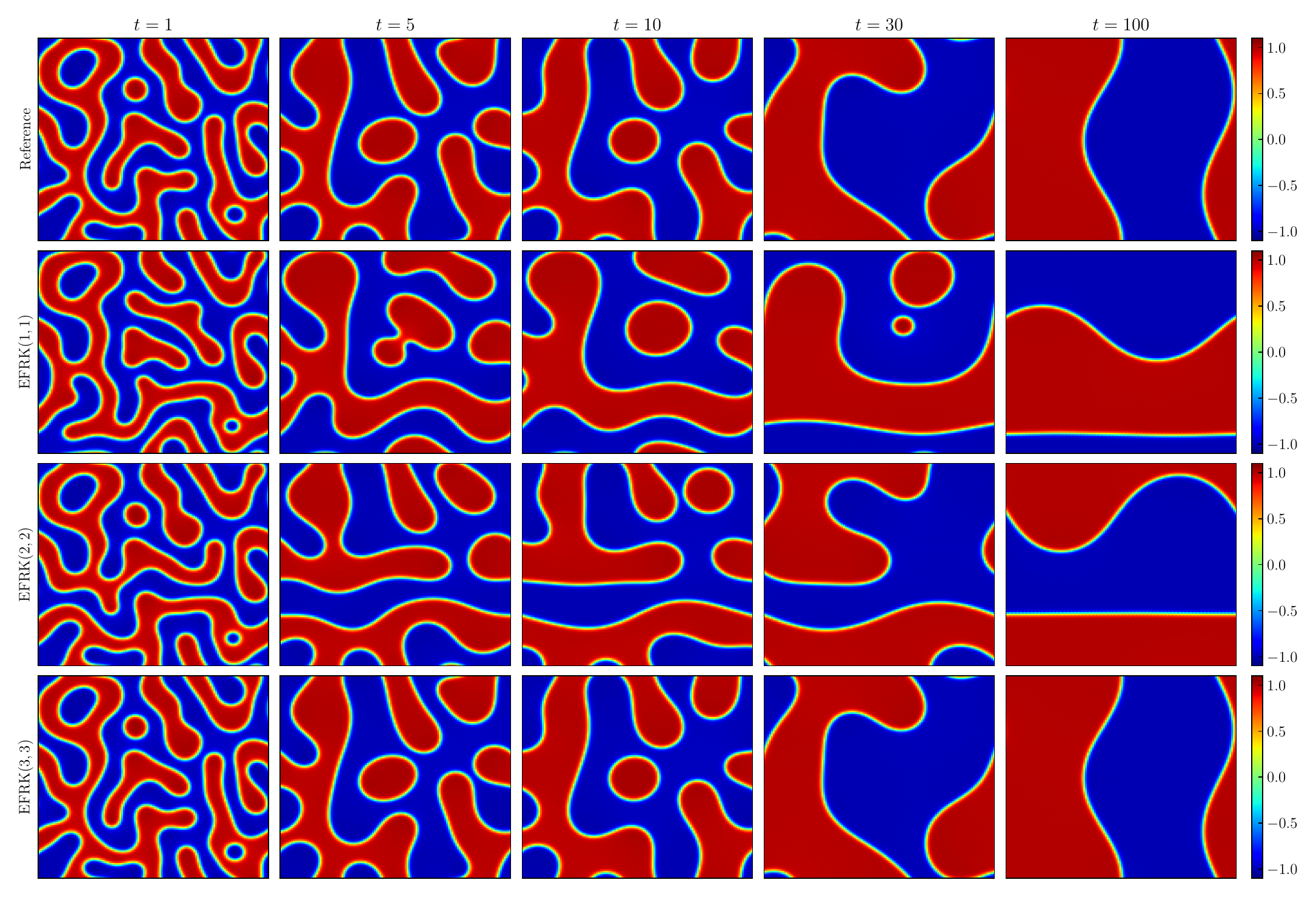}
	\caption{Example \ref{exm:advantage}: Comparison of solutions computed by EFRK(1,1), EFRK(2,2) and EFRK(3,3) with same time step size $\tau = 10^{-3}$. Parameters: $\epsilon^2 = 0.0025$ and $\kappa = 2.0$.}
	\label{fig:advantageSnapshot}
\end{figure}

\begin{figure}[htbp]
	\centering
	\includegraphics[width=0.9\linewidth]{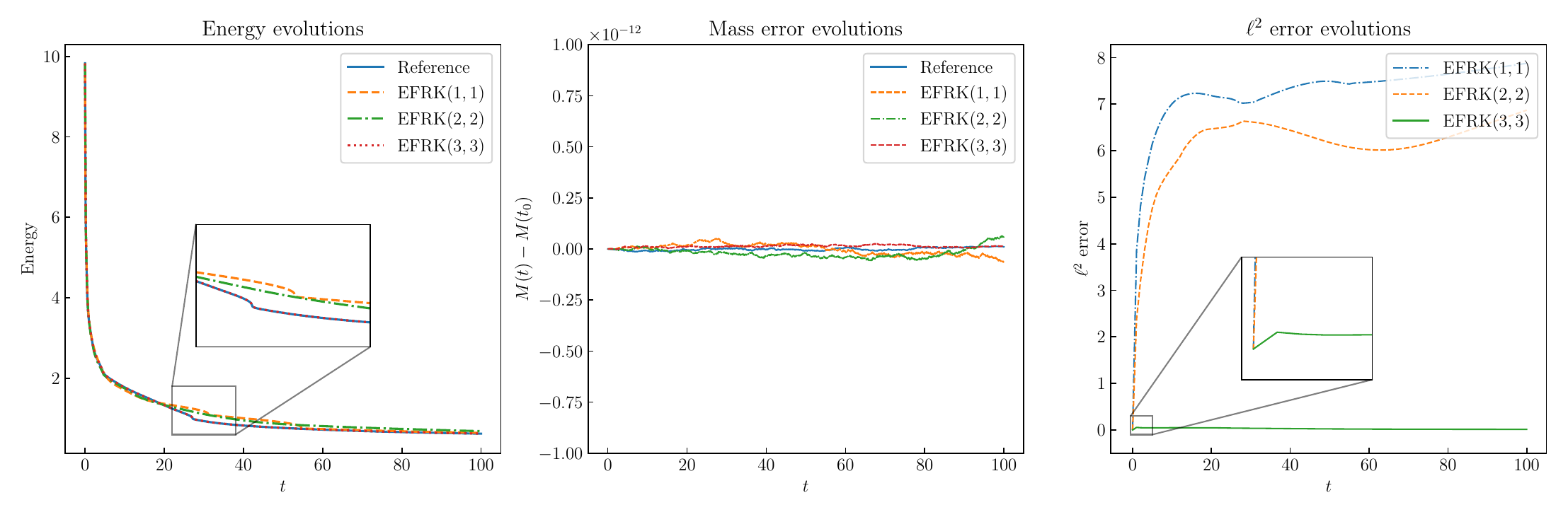}
	\caption{Example \ref{exm:advantage}: Evolution of the energy, change in the mass and $\ell^2$-error computed by EFRK(1,1), EFRK(2,2) and EFRK(3,3) with same time step size $\tau = 10^{-3}$. Parameters: $\epsilon^2 = 0.0025$ and $\kappa = 2.0$.} 
	\label{fig:advantageProperty}
\end{figure}

At first, we simulate the coarsening process of the CH equation using EFRK schemes with varying order: EFRK(1,1), EFRK(2,2), and EFRK(3,3) scheme, and utilize a constant time step size $\tau =  10^{-3}$. Then, we present the solution snapshots in Fig. \ref{fig:advantageSnapshot} and the evolution of energy curves, the change of mass, and $\ell^2$ error in Fig. \ref{fig:advantageProperty}. The results show that all schemes successfully maintain energy dissipation and mass conservation. However, upon comparison with the reference solutions, significant differences are apparent in the evolution of results generated by the EFRK(1,1) and EFRK(2,2) schemes. These discrepancies are also reflected in the energy and $\ell^2$ error diagram. Conversely, the solution generated by the EFRK(3,3) scheme closely matches the reference solution, with its energy curve almost overlapping the reference curve.

Subsequently, we simulate the coarsening process of the CH equation again using a finer time step for the low-order EFRK scheme. Specifically, we use $\tau = 10^{-3}/16$ for the EFRK(1,1) scheme and $\tau = 10^{-3}/2$ for the EFRK(2,2) scheme. The numerical are present in Fig. \ref{fig:advantageSnapshot_2} and \ref{fig:advantageProperty_2}, respectively. At this time, we observe that all solution snapshots are consistent with the reference solution, which confirms the reliability of the reference solution and indicates that all the methods converge to the same solution as $\tau \rightarrow 0$. In addition, from the $\ell^2$ error diagram, we see that although the results are similar, the EFRK(3,3) scheme still achieves the highest accuracy with a larger step size. These findings emphasize the advantages of employing higher-order schemes.
 
Moreover, by comparing the energy and $\ell^2$ error evolution curves of EFRK($3,3$), we observe that the error marked increases during the periods of energy rapid decrease. As the energy towards a local minimum, the corresponding numerical solution varies slowly, resulting in a stable error. This implies that it is crucial to utilize smaller time steps during rapid energy variations to maintain accuracy. Conversely, during the energy change slowly, we can increase the time step without compromising accuracy. Consequently, strategically adjusting the time step size allows us to effectively balance accuracy and computational efficiency.

\begin{figure}[htbp]
	\centering
	\includegraphics[width=0.9\linewidth]{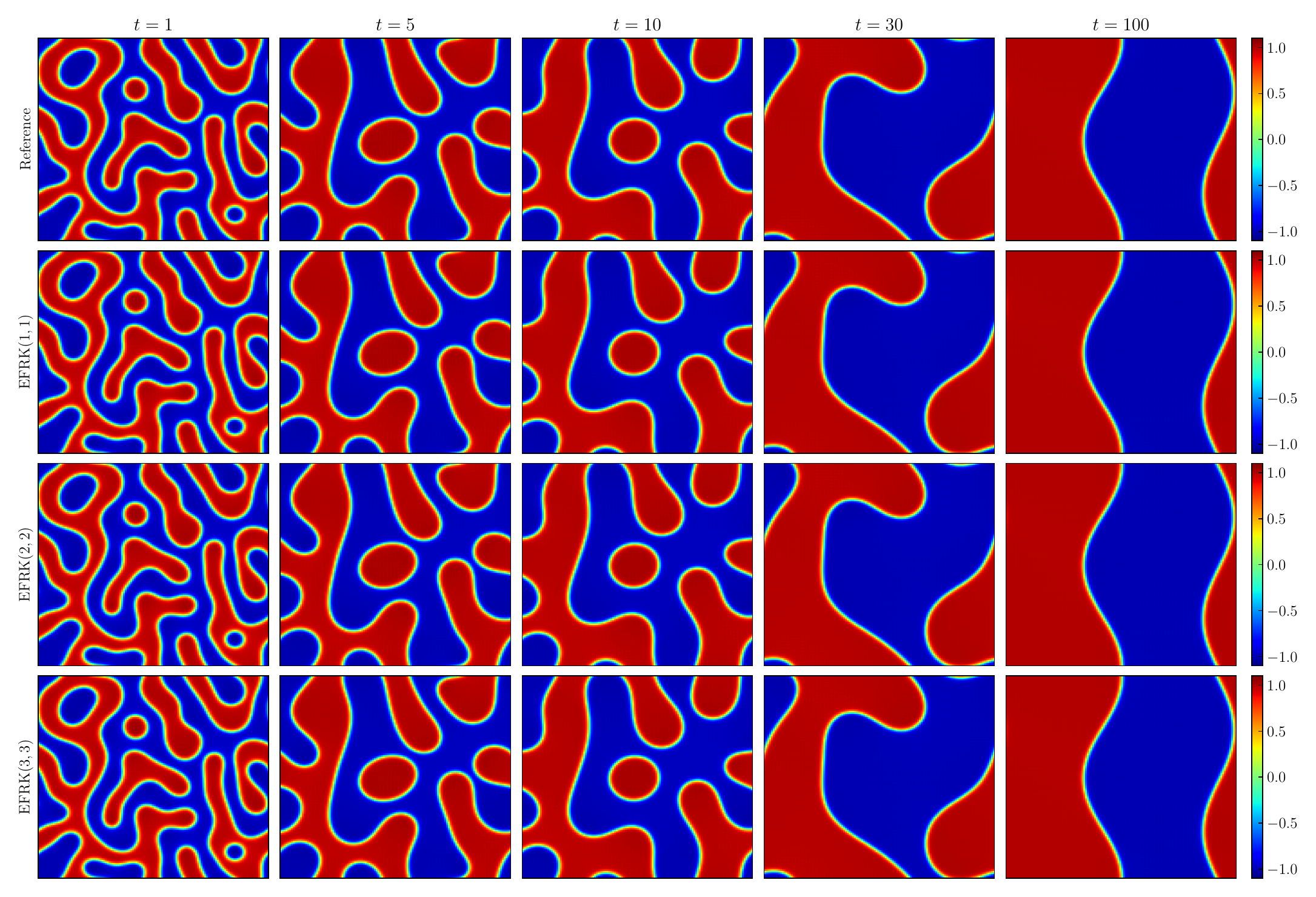}
	\caption{Example \ref{exm:advantage}: Comparison of solutions computed by EFRK(1,1) with $\tau = 10^{-3}/16$, EFRK(2,2) with $\tau = 10^{-3}/2$ and EFRK(3,3) with $\tau = 10^{-3}$. Parameters: $\epsilon^2 = 0.025$, and $\kappa = 2.0$.}
	\label{fig:advantageSnapshot_2}
\end{figure}

\begin{figure}[htbp]
	\centering
	\includegraphics[width=0.9\linewidth]{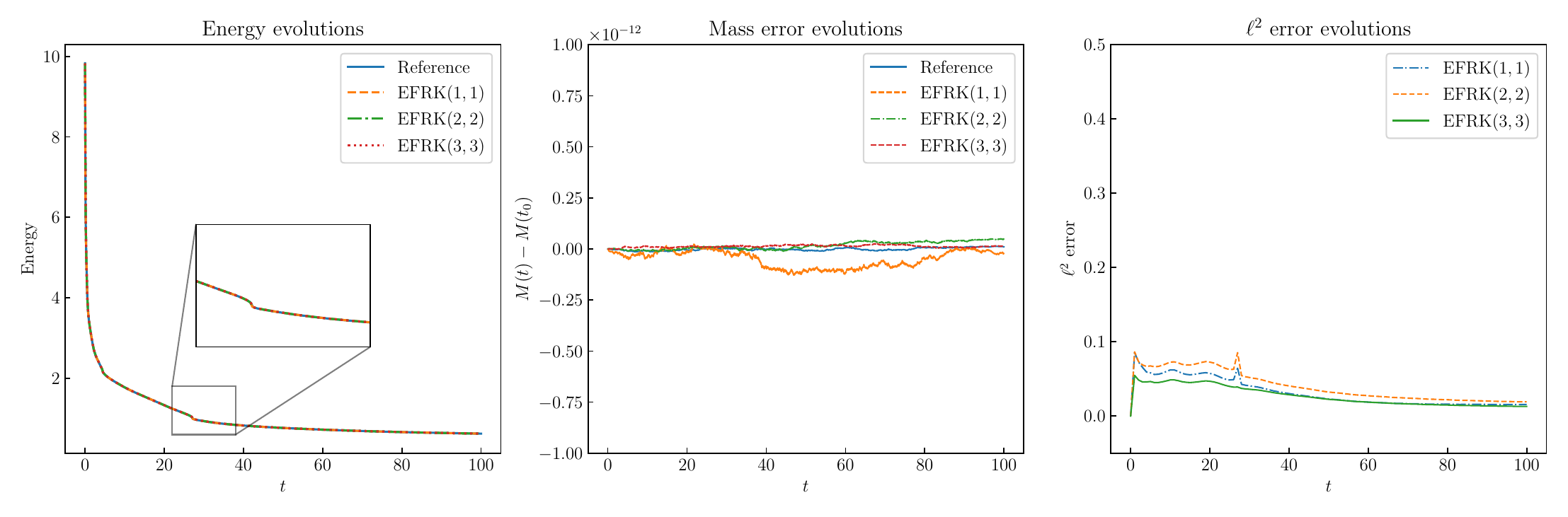}
	\caption{Example \ref{exm:advantage}: Evolution of the energy, change in the mass and $\ell^2$-error computed by EFRK(1,1) with $\tau = 10^{-3}/16$, EFRK(2,2) with $\tau = 10^{-3}/2$ and EFRK(3,3) with $\tau = 10^{-3}$. Parameters: $\epsilon^2 = 0.025$, and $\kappa = 2.0$.} 
	\label{fig:advantageProperty_2}
\end{figure}

\subsection{Adaptive time stepping}
\begin{exm}
\label{exm:adaptive}
The energy of the Cahn-Hilliard equation experiences a significant initial decrease due to nonlinear interactions, followed by a gradual descent until it reaches a local minimum. This behavior indicates that the solution undergoes rapid variations in a very short time, followed by slower changes. As mentioned in previous experiments, implementing an adaptive time-stepping algorithm is appropriate when the time step solely controls the accuracy, as opposed to schemes that alter the steady state of the original system.

For the Cahn--Hilliard equation, there are several adaptive time-stepping strategies. In this study, we adopt the strategy proposed by Z. Zhang and Z. Qiao \cite{zhang2012adaptive}, which adjusts the time step based on the magnitude of energy variation. The formula for the adaptive time step is:
\begin{equation*}
	\tau = \max\left(\tau_{\min},\frac{\tau_{\max}}{\sqrt{1+ \alpha|E'(t)|^2}}\right),
\end{equation*}
where $E(\cdot)$ is the energy functional of the model that we proposed, $\tau_{\min}$ and $\tau_{\max}$ are the pre-set minimum and maximum bounds of the time steps to ensure that they fall within a reasonable range, and the constant $\alpha$ is used to control the level of adaptivity. Under the same setting as in the previous experiments, we take the EFRK(3,3) scheme as an example to demonstrate the performance of the adaptive time-stepping strategy. In our numerical experiment, we set $\alpha = 100$, and the minimum and maximum time steps are selected to be $10^{-5}$ and $10^{-2}$, respectively. The initial step is taken as the minimum time step. For comparison, we compute EFRK$(3,3)$ solutions using a small uniform time step $\tau = 10^{-5}$ and a large uniform time step $\tau = 10^{-2}$ as references. 
\end{exm}

Fig. \ref{fig:adaptSnpshot} and Fig. \ref{fig:adaptDiag} display the solution snapshots, energy evolution, variation of the time step, and CPU time consumption for solving the problem using different time stepping strategies. From the result, we can conclude that although large time steps result in energy stability and the shortest computation time, the obtained topology is vastly dissimilar to that obtained with small time steps. On the other hand, the solution obtained with adaptive time steps coincides with the reference solution. Fig. \ref{fig:adaptDiag} clearly demonstrates that large time steps lack accuracy in capturing drastic energy changes, whereas adaptive time steps can adjust the step size to align with energy changes, leading to solutions very close to the reference solution obtained with smaller time steps. Furthermore, apart from the initial period, the adaptive time step typically stays in the range of $10^{-3}$ and $10^{-2}$, which exhibits comparable computational efficiency to the application of large time steps. These results underscore the efficacy and accuracy of utilizing adaptive time steps in computational simulation.

\begin{figure}[htbp]
	\centering
	\includegraphics[width=0.95\linewidth]{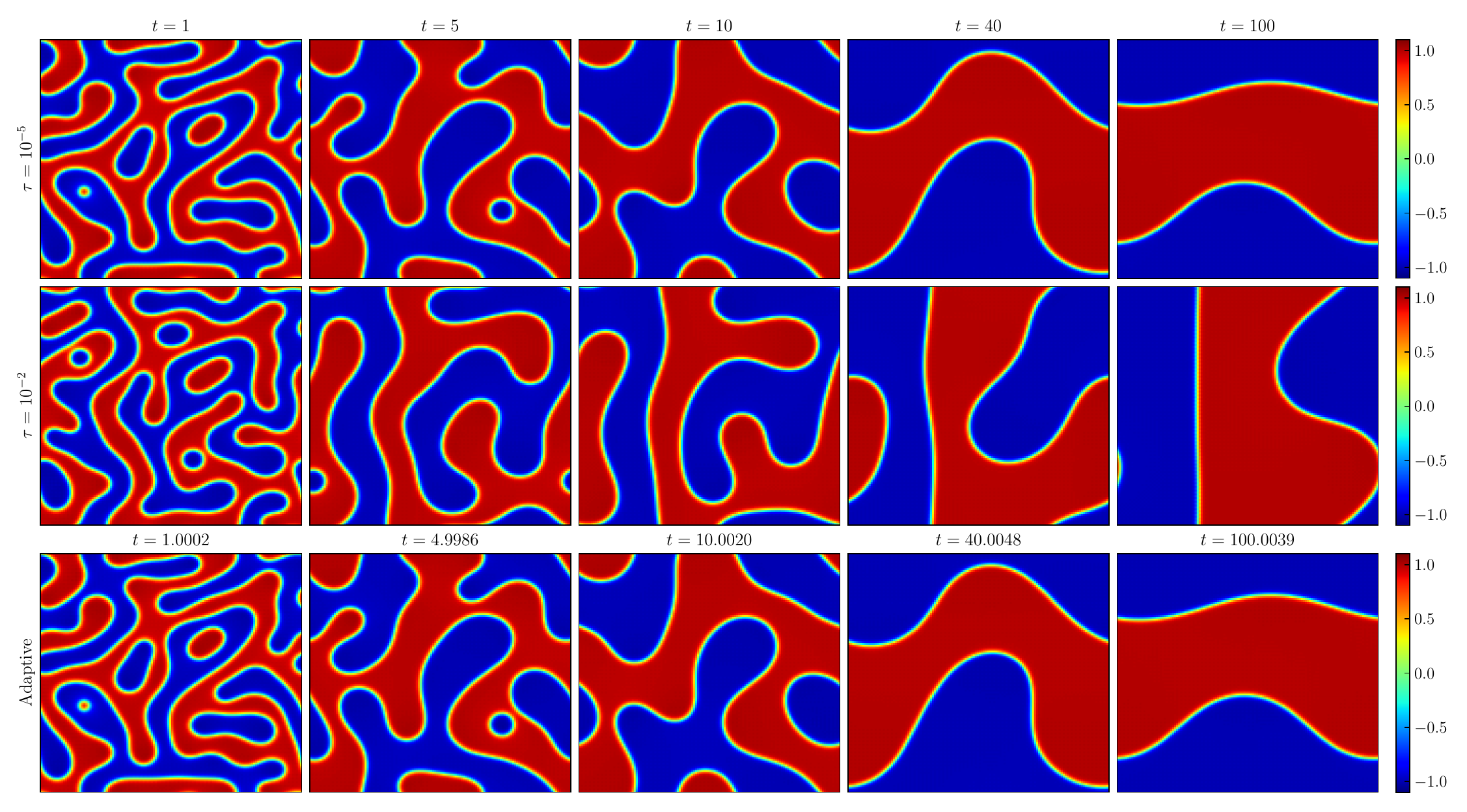}
	\caption{Example \ref{exm:adaptive}: Solution snapshots of the Cahn--Hilliard equation obtained by EFRK(3,3) with different time stepping strategies.}
	\label{fig:adaptSnpshot}
\end{figure}

\begin{figure}[htbp]
	\centering
	\includegraphics[width=0.95\linewidth]{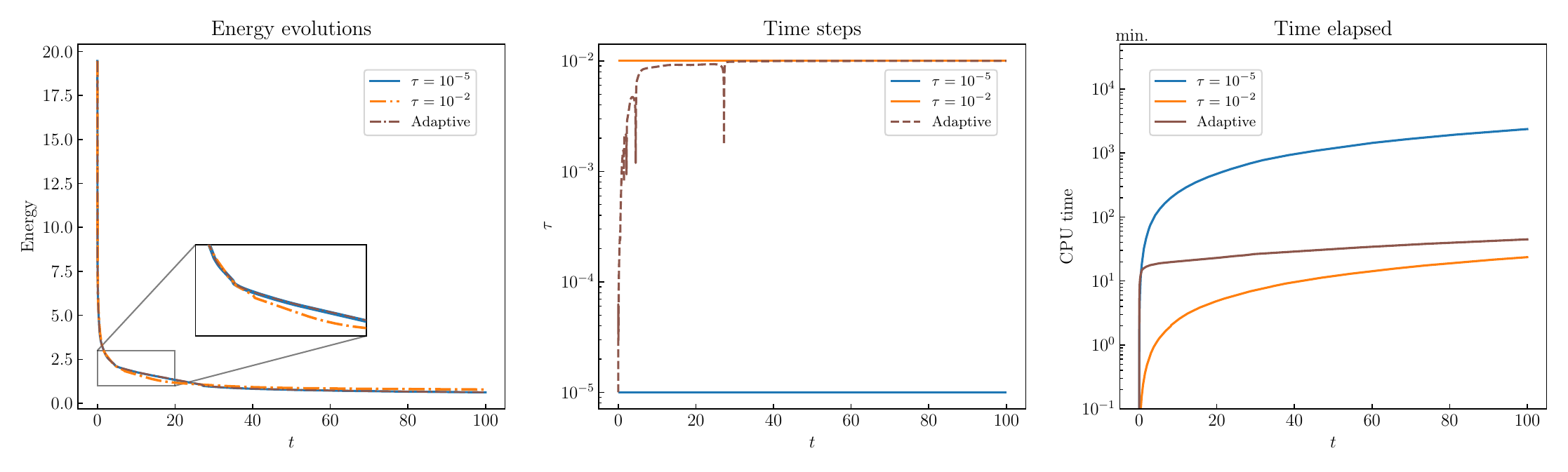}
	\caption{Example \ref{exm:adaptive}: The energy evolution, time step variation, and CPU time elapse along the time with different time step strategies.}
	\label{fig:adaptDiag}
\end{figure}

\begin{exm}
\label{exm:advantage2}
To highlight the superiority of the EFRK scheme, we conducted simulations of 2D coarsening dynamics and compared the results with those yielded by the IFRK schemes, which, as previously discussed, do not preserve the equilibrium state. We set the coefficient $\epsilon^2 = 0.002$ and consider the domain $\Omega = (0,2\pi)^2$ with $N_1 = N_2 = 128$. The initial condition is generated by random values uniformly distributed from $-0.5$ to $0.5$. We utilize the adaptive time strategy, employing the same settings as in the previous experiment. The reference solution is computed with $\tau = 10^{-5}$ using the ERK(3,3) scheme with $\kappa = 0$.
\end{exm}

Fig. \ref{fig:coasrsSnapshot} presents the snapshots of the numerical solutions at $T = 1,4,10,30,50$ computed by EFRK$(2,2)$, EFRK$(3,3)$, and IFRK$(3,3)$, respectively. Despite identical settings, all solutions only exhibit similar topology at the initial stages of evolution. Nevertheless, the solutions obtained by the EFRK schemes remain consistent with the reference solutions throughout the entire time duration.

Based on Fig. \ref{fig:coasrsProperty}, we can explain the reasons for the discrepancies in the numerical solutions. It can be observed that while all schemes conserve mass, the EFRK(2,2) and EFRK(3,3) schemes, thanks to their unconditional energy stability and preservation of equilibrium, maintain decreasing energy curves that align with the reference even during intervals of drastic change. However, the equilibrium of the IFRK(3, 3) is influenced by the time-step size. When the time step varies, the equilibrium also change, leading to fluctuations in the energy. This issue causes discrepancies between the behavior of IFRK(3, 3) and the reference solutions, thereby resulting in relatively unreliable numerical solutions.

\begin{figure}[htbp]
	\centering
	\includegraphics[width=0.95\linewidth]{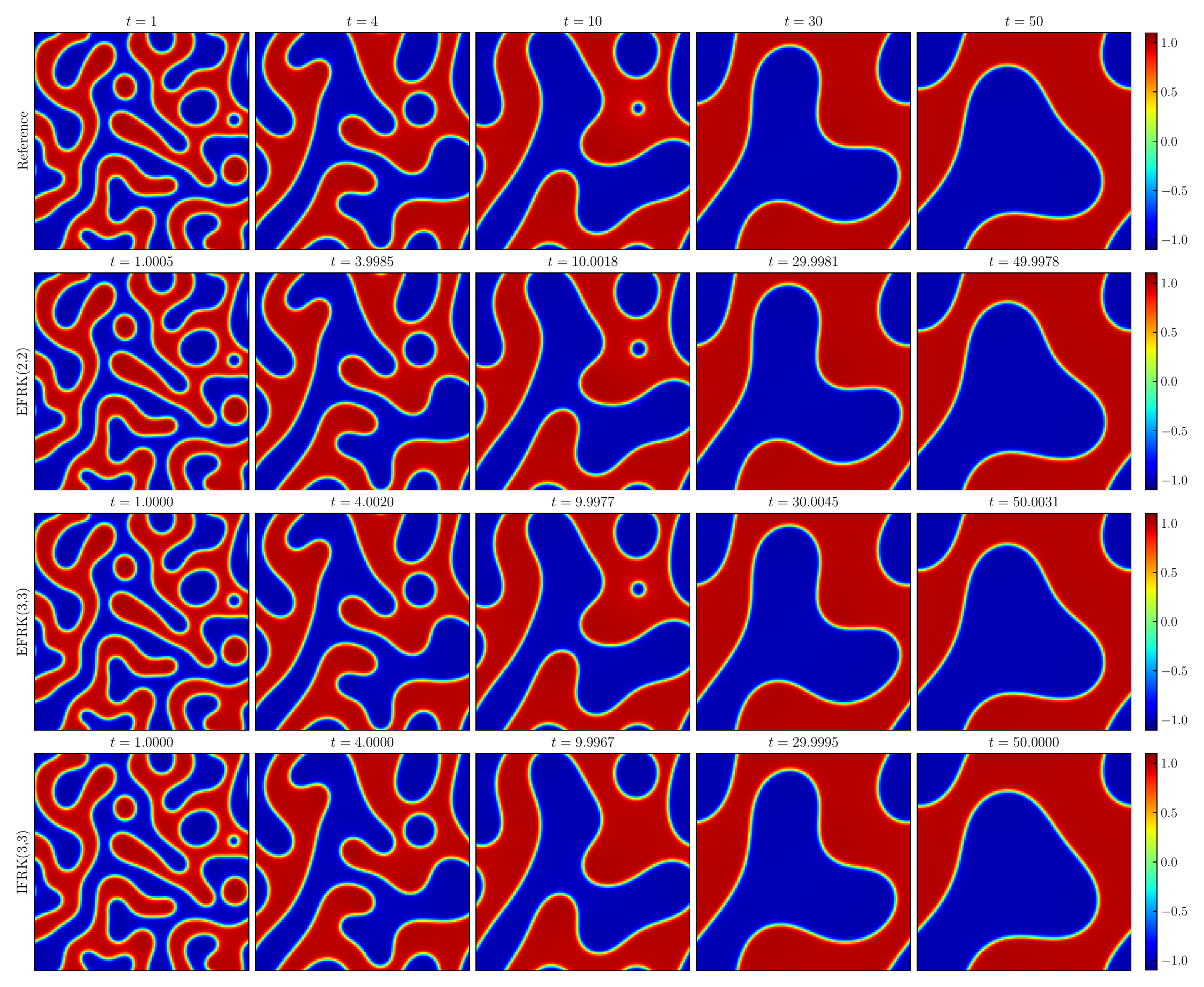}
	\caption{Example \ref{exm:advantage2}: solution for Cahn--Hilliard equation computed using EFRK$(2,2)$, EFRK$(3,3)$ and IFRK$(3,3)$ schemes by the adaptive time stepping strategy.}
	\label{fig:coasrsSnapshot}
\end{figure}

\begin{figure}[htbp]
	\centering
	\includegraphics[width=0.9\linewidth]{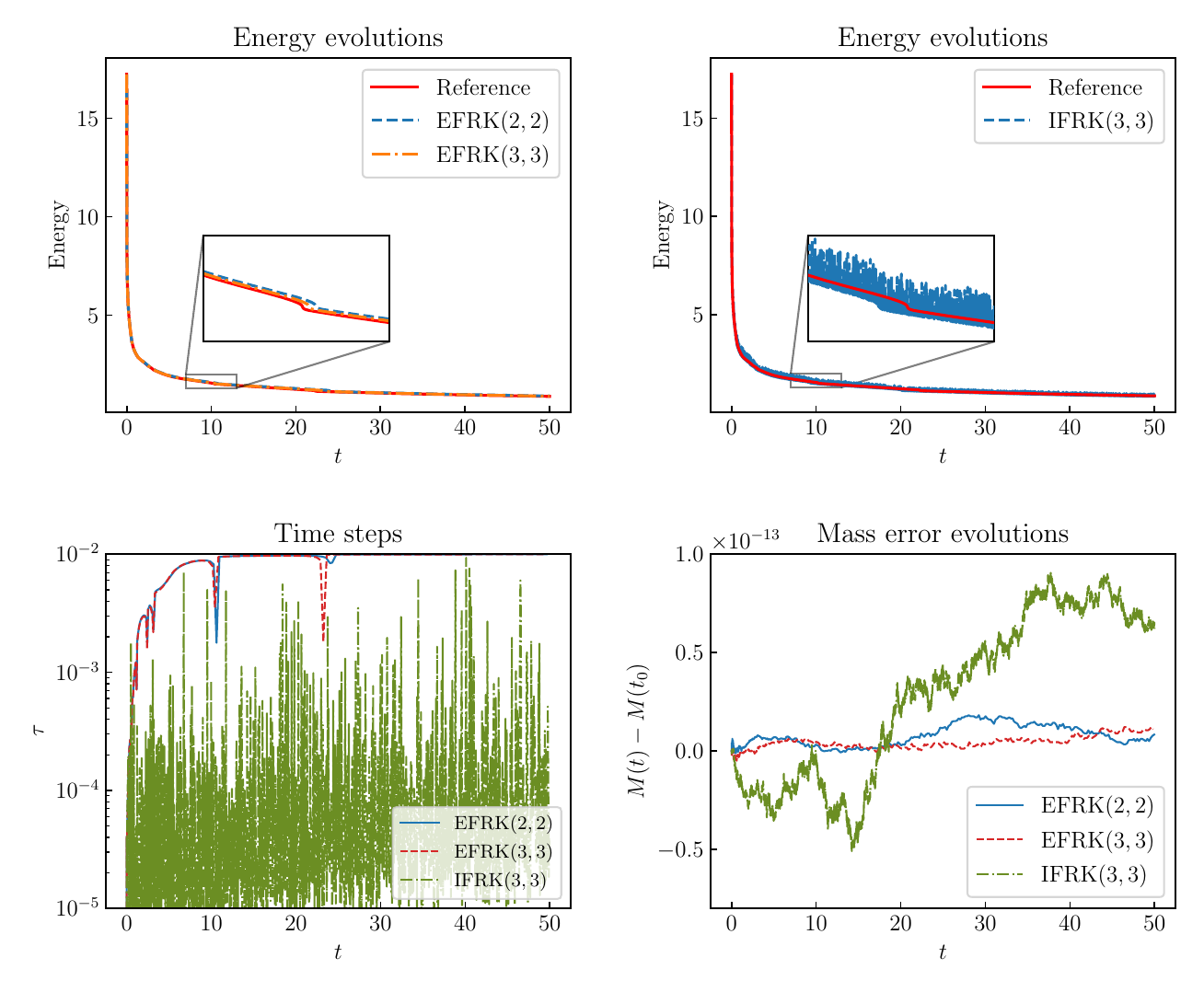}
	\caption{Example \ref{exm:advantage2}: evolution of energy $E$ and the mass computed by EFRK$(2,2)$, EFRK$(3,3)$ and IFRK$(3,3)$ schemes, the time step variation is determined by the adaptive time stepping strategy.}
	\label{fig:coasrsProperty}
\end{figure}

\section{Conclusion}
\label{sec:7}

In this work, we have presented a novel framework for constructing up to third-order unconditionally energy-stable schemes to solve the CH equation. By utilizing Taylor-type polynomial approximations, our approach eliminates the exponential damping effect in the IFRK approach. Consequently, the resulting exponential-free Runge--Kutta (EFRK) framework not only preserves the equilibrium of the problem but also improves accuracy for long-time simulations. We also provide a unified proof to demonstrate that the proposed EFRK schemes can unconditionally preserve energy dissipation of the CH equation. In addition, using linear stability analysis, we demonstrated that the EFRK schemes exhibit $A$-stability for any time step when an appropriate stabilization parameter is selected. Numerous experiments substantiate the rationality and accuracy of the Taylor polynomial approximations. In addition, it is worth noting that the EFRK framework and its associated results can be extended to other phase field models as well, such as the epitaxial growth model \cite{herman2012molecular} and phase field crystal model \cite{elder2002modeling}.

Two distinctive contributions of this work are the improved accuracy of the proposed EFRK schemes compared to traditional IFRK schemes for long-time simulations and the unified strategy to analyze energy stability. However, a limitation of the current work is that the Taylor polynomial approximations fail to construct fourth-order EFRK schemes. To develop energy-stable schemes with higher temporal order, it may be worthwhile to establish a more general strategy to approximate exponential functions and analyze the energy stability.

\section*{Acknowledge}
The authors would like to thank the anonymous referees for the valuable comments and constructive suggestions that have led to significant improvements in this work.

This work was supported by the National Natural Science Foundation of China (12271523, 12071481, 12371374),  National Key Research and Development Program of China (2020YFA0709803), Science and Technology Innovation Program of Hunan Province (2022RC1192, 2021RC3082),  Natural Science Foundation of Hunan (2021JJ20053), Defense Science Foundation of China (2021-JCJQ-JJ-0538, 2022-JCJQ-JJ-0879), and National University of Defense Technology (2023-LXY-FHJJ-002).

\section*{Declarations}
The authors declare that they have no known competing financial interests or personal relationships that could have appeared to influence the work reported in this paper. The code and data accompanying this manuscript are publicly available at \href{https://github.com/HaifengWang1031/EFRK-CH}{https://github.com/HaifengWang1031/EFRK-CH}.

\bibliographystyle{unsrt}
\bibliography{references}

\end{document}